\DeclareMathOperator{\Dist}{dist}
\DeclareMathOperator{\Div}{div}
\newcommand{\AP}{A_{\Vert}}
\newcommand{\Z}{\mathbb{Z}}
\newcommand{\supp}{\mathrm{supp}}
\newcommand{\EPS}{\varepsilon}
\newcommand{\ColorWord}[2]{\color{#1} #2 \color{black} }
\numberwithin{equation}{section}
\theoremstyle{plain}
\newtheorem{thm}[equation]{Theorem}
\newcommand{\refthm}[1]{\emph{\ColorWord{blue}{Theorem} \ref{#1}}}
\newtheorem{lemma}[equation]{Lemma}
\newcommand{\reflemma}[1]{\emph{\ColorWord{blue}{Lemma} \ref{#1}}}
\newtheorem{prop}[equation]{Proposition}
\newcommand{\refprop}[1]{\emph{\ColorWord{blue}{Proposition} \ref{#1}}}
\newtheorem{cor}[equation]{Corollary}
\newcommand{\refcor}[1]{\emph{\ColorWord{blue}{Corollary} \ref{#1}}}
\theoremstyle{definition}
\newtheorem{defin}[equation]{Definition}
\newcommand{\refdef}[1]{\emph{Definition \ref{#1}}}
\theoremstyle{remark}
\newtheoremstyle{named}{}{}{\itshape}{}{\bfseries}{}{.5em}{#1 #3}
\theoremstyle{named}
\title{Solvability of the Dirichlet problem for a new class of elliptic operators}
\author{Martin Ulmer}
\date{\today}
\begin{document}

\maketitle

\begin{abstract}
\noindent
We study an elliptic operator \(L:=\mathrm{div}(A\nabla \cdot)\) on the upper half space. It is known that if the matrix \(A\) is independent in the transversal \(t\)-direction, then we have \(\omega\in A_\infty(\sigma)\). In the present paper we improve on the \(t\)-independence condition by introducing a mixed \(L^1-L^\infty\) condition that only depends on \(\partial_t A\), the derivative of \(A\) in transversal direction. This condition is different from other conditions in the literature.

In the case of the upper half plane, we obtain the improvement that an \(L^1\)-Carleson condition on \(|\partial_tA|\) implies \(\omega\in A_\infty(\sigma)\). In particular, this condition is similar to an \(L^1\)-version of the DKP condition with derivative in only the transversal direction. 
\end{abstract}

\tableofcontents

\section{Introduction}

In this work let \(\Omega:=\mathbb{R}^{n+1}_+:=\mathbb{R}^n\times (0,\infty), n\geq 1\) be the upper half space and \(L:=\Div(A\nabla\cdot)\) an uniformly elliptic operator with bounded measurable coefficients, i.e. \(A(x,t)\) is a real not necessarily symmetric \(n+1\) by \(n+1\) matrix and there exists \(\lambda_0>0\) such that
\begin{align}
    \lambda_0 |\xi|^2\leq \xi^T A(x,t) \xi \leq \lambda_0^{-1}|\xi|^2 \qquad \textrm{ for all }\xi\in \mathbb{R}^{n+1},\label{eq:DefinitionOfUniformElliptic}
\end{align}
and a.e. \((x,t)=(x_1,...,x_n,t)\in \mathbb{R}^{n+1}_+\). We are interested in the solvability of the \(L^p\) Dirichlet boundary value problem 
\[\begin{cases} Lu=\Div(A\nabla u)=0 &\textrm{in }\Omega, \\ u=f &\textrm{on }\partial\Omega,\end{cases}\]
where \(f\in L^p(\partial\Omega)\) (see \refdef{def:L^pDirichletProblem}). It is well known that solvability for some \(1< p<\infty\) is equivalent to the elliptic (doubling) measure \(\omega\) corresponding to \(L\) lying in the Muckenhoupt space \(A_\infty(\sigma)\). This Muckenhoupt space yields a version of scale-invariant absolute continuity between the elliptic measure \(\omega\) and the surface measure \(\sigma\). Due to the counterexamples in \cite{caffarelli_completely_1981} and \cite{modica_construction_1980} we do not always have absolute continuity between \(\omega\) and \(\sigma\), even if we assume that the coefficients of \(A\) are continuous. In fact, these examples show that some regularity on the coefficients in transversal direction to the boundary of the domain is necessary to obtain absolute continuity. This observation gave rise to the study of so called \textit{t-independent} elliptic operators \(L\), i.e. operators where \(A(x,t)=A(x)\) is independent of the transversal direction. The first breakthrough in this direction came in \cite{jerison_dirichlet_1981}, where Jerison and Kenig showed via a ``Rellich" identity that if \(A\) is symmetric and \(t\)-independent with bounded and measurable coefficients on the unit ball, we have \(\omega\in B_2(\sigma)\subset A_\infty(\sigma)\). Although this ``Rellich" identity does not 
hold for operators with nonsymmetric matrices \(A\), the work \cite{kenig_new_2000} established (20 years later) \(\omega\in A_\infty(\sigma)\) for nonsymmetric operators in the upper half plane \(\mathbb{R}^2_+\). Additionally, they gave a counterexample showing that \(\omega\in B_2(\sigma)\) cannot be expected for nonsymmetric matrices and that the space \(A_\infty(\sigma)\) is sharp. Since we are not going to use the Muckenhoupt spaces explicitely in the following, we are not going to recall their defintions and their properties. However, they are an important tool in harmonic analysis and go back to \cite{muckenhoupt_weighted_1972} and \cite{coifman_weighted_1974}, and more results about them can be also found in \cite{grafakos_modern_2009}.

\medskip

For dimension \(n\) however, it took until the Kato conjecture was resolved in \cite{Auscher_Kato} after being open for 50 years, before the authors of \cite{hofmann_square_2015} could extend this result to matrices that are not necessarily symmetric and have merely bounded and measurable coefficients. Later, this work was streamlined in \cite{hofmann_dirichlet_2022}, where the authors also extended the result to the case of matrices whose antisymmetric part might be unbounded, but has a uniformly bounded BMO norm instead. The \(t\)-independence condition has also been adapted to the parabolic setting as a sufficient condition for solvability of the \(L^p\) Dirichlet problem (see \cite{auscher_dirichlet_2018}) or for the elliptic regularity boundary value problem \cite{hofmann_regularity_2015}. 

\medskip

Shortly after the breakthrough \cite{jerison_dirichlet_1981}, Jerison, Kenig and Fabes published \cite{fabes_necessary_1984}, where they showed that \(t\)-independence can be relaxed if we use continuous coefficients. More precisely, they show that if a symmetric \(A\) has continuous coefficients, \(\Omega\) is a bounded \(C^1\)-domain, and the modulus of continuity 
\[\eta(s)=\sup_{P\in \partial\Omega, 0<r<s}|A_{ij}(P-rV(P))-A_{ij}(P)|\]
with outer normal vector field \(V\) satisfies the Dini-type condition
\begin{align}\int_0\frac{\eta(s)^2}{s}ds<\infty,\label{DiniTypeCond}\end{align}
then \(\omega\in B_2(\sigma)\subset A_\infty(\sigma)\). Together with the counterexample in \cite{jerison_dirichlet_1981}, where completely singular measures \(\omega\) with respect to the surface measures are constructed for a given \(\eta\) with \(\int_0 \frac{\eta(s)^2}{s}ds=+\infty\), the Dini type condition (\ref{DiniTypeCond}) turns out to be sufficient for \(\omega\in A_\infty(d\sigma)\) and necessary in some sense, if \(A\) is symmetric with continuous bounded coefficients. The sense of necessity means that for every given \(\eta\) that fails to satisfy \eqref{DiniTypeCond} there exists a matrix \(A\) and a corresponding elliptic operator so that its deduced modulus of continuity \(\eta_A\) is less or equal to the given \(\eta\) but where the corresponding elliptic measure does not lie in \(A_\infty(\sigma)\). A little bit later in \cite{dahlberg_absolute_1986}, Dahlberg removed the assumption of continuity by considering perturbations from continuous matrices.

\medskip

In this paper we assume that the matrices have bounded and measurable coefficients and are not necessarily symmetric. We will work on the upper half space \(\Omega:=\mathbb{R}^{n+1}_+\). The main innvotation here is the introduction of a mixed \(L^1-L^\infty\) condition on \(\partial_t A(x,t)\), i.e.
\begin{align}\int_0^\infty\Vert\partial_t A(\cdot,t)\Vert_{L^\infty(\mathbb{R}^n)} ds\leq C<\infty.\label{cond:L1-Linfty}\end{align}
That is, we take the \(L^\infty\)-norm of \(\partial_t A(x,t)\) in the \(x\)-direction first, before imposing some quantified form of decay close to 0 and to infinity in \(L^1\) norm. The main theorem of this work is the following.

\begin{thm}\label{thm:MainTheorem}
Let \(L:=\Div(A\nabla\cdot)\) be an elliptic operator satisfying \eqref{eq:DefinitionOfUniformElliptic} on \(\Omega=\mathbb{R}^{n+1}\). If condition \eqref{cond:L1-Linfty} is satisfied and \(|\partial_t A|t\leq C<\infty\), then \(\omega\in A_\infty(d\sigma)\), i.e. the \(L^p\) Dirichlet boundary value problem for \(L\) is solvable for some \(1<p<\infty\).
\end{thm}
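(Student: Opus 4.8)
The plan is to reduce the claim to a \emph{Carleson measure estimate} (CME) for bounded null solutions of $\mathcal{L}$ --- namely that every $u$ with $\mathcal{L}u=0$ in a Carleson box $T(\Delta)=\Delta\times(0,r_\Delta)$ and $\Vert u\Vert_{L^\infty}\le 1$ obeys $\iint_{T(\Delta)}|\nabla u(x,t)|^2\,t\,dx\,dt\le C\,\sigma(\Delta)$, uniformly in $\Delta$ --- and then to prove that CME. By now-standard arguments (approximation by operators with smooth coefficients, for which one has a priori absolute continuity, together with the passage from CME to a weak reverse Hölder inequality for the Poisson kernel on the half-space) such a CME, once established with constants depending only on $n,\lambda_0$ and the structural constant $C$, upgrades to $\omega\in A_\infty(d\sigma)$. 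First I would extract the structural consequences of the two hypotheses. Since $\int_0^\infty\Vert\partial_t A(\cdot,t)\Vert_{L^\infty}\,dt\le C$, the matrices $A(\cdot,t)$ converge in $L^\infty(\mathbb{R}^n)$ as $t\to 0^+$, so the boundary trace $A_0(x):=\lim_{t\to 0^+}A(x,t)$ exists, is again uniformly elliptic with the same constant, and
\[
\sup_{x}|A(x,t)-A_0(x)|\le h(t):=\int_0^t\Vert\partial_s A(\cdot,s)\Vert_{L^\infty}\,ds ,
\]
where $h$ is nondecreasing, bounded by $C$, and $h(t)\to 0$ as $t\to 0^+$. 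Combining both hypotheses gives the key fact that $d\mu(x,t):=t\,|\partial_t A(x,t)|^2\,dx\,dt$ is a Carleson measure with $\Vert\mu\Vert_{\mathcal{C}}\le C^2$: indeed $t|\partial_t A|^2\le(t|\partial_t A|)\,|\partial_t A|\le C\,|\partial_t A|$ and $\iint_{T(\Delta)}|\partial_t A|\,dx\,dt\le\sigma(\Delta)\int_0^\infty\Vert\partial_t A(\cdot,s)\Vert_{L^\infty}\,ds\le C\,\sigma(\Delta)$.

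Next I would introduce the $t$-independent operator $\mathcal{L}_0:=\Div(A_0\nabla\cdot)$. As $A_0$ is bounded, measurable, uniformly elliptic and $t$-independent, \cite{hofmann_regularity_2014} (or \cite{hofmann_dirichlet_2022}) gives $\omega_{\mathcal{L}_0}\in A_\infty(d\sigma)$, hence the CME for bounded null solutions of $\mathcal{L}_0$. The heart of the proof is to transfer this CME to $\mathcal{L}$, viewing $\mathcal{L}$ as a perturbation of $\mathcal{L}_0$ by $B:=A-A_0$, which vanishes at the boundary ($\sup_x|B(\cdot,t)|\le h(t)\to 0$) and whose transversal derivative $\partial_t B=\partial_t A$ is controlled by $d\mu$ and by $t|\partial_t B|\le C$. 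For a bounded solution $u$ of $\mathcal{L}u=0$ one rewrites $\Div(A_0\nabla u)=-\Div(B\nabla u)$ and runs the usual CME scheme for $\mathcal{L}_0$ (Caccioppoli, Cauchy--Schwarz, one integration by parts in $t$), where the extra terms, thanks to the boundary vanishing of $B$ and the fact that only $\partial_t A$ enters, are schematically $\iint_{T(\Delta)}|\nabla u|^2\,t\cdot t|\partial_t A|^2\,dx\,dt$ and $\iint_{T(\Delta)}|u|\,|\nabla u|\,|\partial_t A|\,dx\,dt$. The first is absorbed into a small multiple of the square function after a stopping-time decomposition of $T(\Delta)$ adapted to $\mu$ (the Carleson lemma, together with control of the relevant nontangential maximal function of $\nabla u$); the second is estimated by Cauchy--Schwarz against $d\mu$ using $\Vert u\Vert_\infty\le 1$. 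A good-$\lambda$/sawtooth comparison with $\mathcal{L}_0$-solutions then closes the bootstrap and delivers the CME for $\mathcal{L}$. (Alternatively one may run a Kenig--Pipher-type argument directly for $\mathcal{L}$, the point again being that in the bilinear-form manipulations for the Dirichlet problem only $\partial_t A$, controlled by $\mu$, appears.)

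I expect the main obstacle to be the \emph{global, scale-invariant} character of the estimate. The perturbation $B$ is \emph{not} a Fefferman--Kenig--Pipher perturbation of $A_0$: the measure $|B(x,t)|^2\,t^{-1}\,dx\,dt$ need not be Carleson, since $h(t)$ tends to a nonzero constant (not to $0$) as $t\to\infty$ whenever $A$ genuinely depends on $t$, and $h(t)^2/t$ may also fail to be integrable near $0$. Thus no off-the-shelf perturbation theorem applies; the boundary vanishing of $B$ and the Carleson control of $\partial_t B$ must be used directly, and the error terms have to be collected into a Carleson measure of \emph{controlled norm} --- not merely one finite on each fixed box --- uniformly across all scales. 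This is precisely where the pointwise bound $t|\partial_t A|\le C$ is indispensable: it both renders $\Vert\mu\Vert_{\mathcal{C}}$ finite and supplies the scale-invariant smallness needed at large heights. A secondary but real constraint is the absence of any Rellich identity (the matrix is not assumed symmetric), which forbids energy-identity shortcuts and forces the whole comparison to be carried out at the level of elliptic measure / CME, with the $A_\infty$ --- rather than $B_2$ --- theory of \cite{hofmann_regularity_2014} as the only available input for $\mathcal{L}_0$.
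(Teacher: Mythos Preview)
Your reduction to a Carleson measure estimate and the observation that $d\mu=t|\partial_t A|^2\,dx\,dt$ is a Carleson measure are both correct and are used in the paper as well. But from that point on your plan diverges from the paper and, as written, has a genuine gap.

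The gap is in the perturbation step. You propose to run ``the usual CME scheme for $\mathcal{L}_0$'' on a solution $u$ of $\mathcal{L}$ and claim the errors are schematically $\iint|\nabla u|^2t\cdot t|\partial_t A|^2$ and $\iint|u|\,|\nabla u|\,|\partial_t A|$. Two problems. First, the CME proof for $t$-independent operators in \cite{hofmann_dirichlet_2022} is not ``Caccioppoli, Cauchy--Schwarz, one integration by parts in $t$''; it is the full Hodge-decomposition and heat-semigroup argument, and replacing $A$ by $A_0$ contaminates every step with factors of $B=A-A_0$, whose size $h(t)$ is \emph{not} small at large $t$. Second, even granting your schematic terms, the absorption you describe does not work: the Carleson norm of $\mu$ is a fixed structural constant, not small, and there is no pointwise (or even averaged nontangential) control of $|\nabla u|$ for solutions of a merely bounded-measurable operator, so a stopping-time argument adapted to $\mu$ cannot turn $\iint|\nabla u|^2 t\,d\mu$ into a small multiple of the square function. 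Likewise $\iint|u|\,|\nabla u|\,|\partial_t A|$ after Cauchy--Schwarz produces $\iint|\partial_t A|^2/t$, which is not controlled by the hypotheses. In short, you have correctly identified that no off-the-shelf perturbation theorem applies, but your substitute mechanism is not filled in.

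The paper does \emph{not} compare with the frozen boundary operator $\mathcal{L}_0$ at all. Instead it runs the \cite{hofmann_dirichlet_2022} scheme directly for $\mathcal{L}$, but with every object allowed to depend on the height $s$: for each $s$ one Hodge-decomposes $c(\cdot,s)\chi_{3\Delta}=A_\Vert(\cdot,s)\nabla\phi^s+h^s$ with respect to $L^s=\Div_x(A_\Vert(\cdot,s)\nabla_x\cdot)$, and one approximates $\phi^s$ by $\rho_\eta(x,s)=e^{-\eta^2 s^2 L^s}\phi^s(x)$. The price is that $\partial_s\rho_\eta$ now has three pieces (one from $\partial_t$ of the semigroup, one from $e^{-\eta^2 s^2 L^s}\partial_s\phi^s$, and one Duhamel term coming from $\partial_s L^s$), and the bulk of the work is proving square-function bounds for each. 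This is where the Kato conjecture is used in an essential way, and where the $L^1$ condition $\int_0^\infty\|\partial_s A\|_\infty\,ds<\infty$ (not merely the $L^2$ one) enters: it gives $\|\nabla\partial_s\phi^s\|_{L^2}\lesssim\|\partial_s A\|_\infty|\Delta|^{1/2}$ and, crucially, a \emph{uniform in $s$} bound $M[|\nabla\phi^s|^2]^{1/2}\le\kappa_0$ on a good set $F\subset\Delta$, which is what makes the sawtooth absorption work. Your outline does not touch any of this machinery, and I do not see a way to avoid it.
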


We would like to put our result \refthm{thm:MainTheorem} into the context of similar conditions in the literature. 


Although the Dini-type condition \eqref{DiniTypeCond} has only been considered on bounded domains with regular boundary and only for symmetric operators in \cite{fabes_necessary_1984} and \cite{dahlberg_absolute_1986}, we do have a similar and even stronger result for not necessarily symmetric matrices in the unbounded upper half space. Inspired by the application of perturbation theory in \cite{dahlberg_absolute_1986}, we can introduce the following Carleson measure condition
\begin{align}\sup_{\substack{\Delta\subset\partial\Omega \\ \Delta \textrm{ boundary ball}}}\int_{T(\Delta)} \frac{\sup_{\frac{1}{2}s\leq t\leq \frac{3}{2}s, y\in \Delta(x, s/2)}|A(x,t)-A(x,0)|^2}{s}dxds<\infty\label{DiniTypePerturbation}\end{align}
 on the matrix. Assuming \eqref{DiniTypePerturbation} allows us to consider the elliptic operator \(L_0=\Div(A\nabla \cdot)\) as a perturbation of the \(t\)-independent operator \(L_1=\Div(\tilde{A}\nabla \cdot)\) with \(\tilde{A}(x,t):=A(x,0)\). For the t-independent operator we have \(\omega_{\tilde{L}}\in A_\infty(\sigma)\) (cf. \cite{hofmann_dirichlet_2022}) and perturbation theory yields \(\omega_L\in A_\infty(\sigma)\) (for the history of perturbation theory we refer the reader to \cite{cavero_perturbations_2020} and references therein). It is easy to see that \eqref{DiniTypePerturbation} is a more general condition than what the classical Dini-type condition \eqref{DiniTypeCond} would correspond to on an unbounded domain like the upper half space. 
\medskip

Furthermore, there are also small \(L^\infty\) perturbations of \(t-\)independent operators like introduced in \cite{alfonseca_analyticity_2011}. If the matrix \(A\) is symmetric and \(\Vert A(x,t)-A(x,0)\Vert_{L^\infty(\Omega)}\) is sufficiently small, then the \(L^2\)-Dirichlet problem is solvable for \(L\). This conclusion even works if \(A\) has complex coefficients. But we can easily see that, even if a similar result was true for nonsymmetric matrices \(A\) and small \(L^\infty\) perturbations, this condition does not subsume \eqref{cond:L1-Linfty}.

\medskip

The condition \eqref{cond:L1-Linfty} generalises the \(t\)-independence condition, yet it is different from the Dini type condition \eqref{DiniTypeCond} or from condition \eqref{DiniTypePerturbation}. 
Intuitively speaking, the condition \eqref{cond:L1-Linfty} can cover operators with larger gradient of \(A\) in \(t-\)direction 
close to the boundary than the Dini-type condition \eqref{DiniTypeCond} as will be illustrated in the following example.
\smallskip

Consider the one-dimensional function \(f(t):=\frac{1}{-\ln(-\ln(t))}\) for \(t>0\). We can calculate
\(f'(t)=\frac{1}{\ln(-\ln(t))^2}\frac{1}{\ln(t)}\frac{1}{t}\), which means that
\[\int_0^{1/4}\frac{f(t)^2}{t}dt=\int_0^{1/4}\frac{1}{t\ln(-\ln(t))^2}dt=\infty,\]
and
\[\int_0^{1/4}|f'(t)|dt=\int_0^{1/4}\frac{1}{t\ln(t)\ln(-\ln(t))^2}dt<\infty.\]
Now, choosing for instance 
\[A(x,t):=\begin{cases}
    I+f(t)I & \textrm{ if }0<t<\frac{1}{4},
    \\
    I+f\big(\frac{1}{4}\big)I & \textrm{ if }t\geq \frac{1}{4},
\end{cases}\]
yields an elliptic operator with a continuous matrix \(A\), that does not satisfy \eqref{DiniTypeCond} or \eqref{DiniTypePerturbation}, but does satisfy \(|\partial_t A|\leq \frac{1}{t}\) and \eqref{cond:L1-Linfty}.

\medskip

If we restrict to the case of \(n=1\), i.e. the upper half plane \(\Omega:=\mathbb{R}^{2}_+\), we obtain an even wider class of operators for which we have solvability of the Dirichlet problem. We assume the \(L^1\) Carleson condition on \(\partial_t A(x,t)\) given by
\begin{align}\sup_{\Delta\subset\partial\Omega \textrm{ boundary ball}}\frac{1}{\sigma(\Delta)}\int_{T(\Delta)}\sup_{(y,s)\in B(x,t,t/2)}|\partial_s A(y,s)| dx dt\leq C<\infty.\label{condition:L^1Carlesontypecond}\end{align}
Then we can state the following theorem.

\begin{thm}\label{thm:MainTheoremn=1}
Let \(L:=\Div(A\nabla\cdot)\) be an elliptic operator satisfying \eqref{eq:DefinitionOfUniformElliptic} on \(\Omega=\mathbb{R}^{2}_+\). If condition \eqref{condition:L^1Carlesontypecond} is satisfied and \(|\partial_t A|t\leq C<\infty\), then \(\omega\in A_\infty(d\sigma)\), i.e. the \(L^p\) Dirichlet boundary value problem for \(L\) is solvable for some \(1<p<\infty\).
\end{thm}

It is clear that on the upper half plane the \(L^1\) Carleson condition on \(\partial_tA\) \eqref{condition:L^1Carlesontypecond} is weaker than the mixed \(L^1-L^\infty\) condition \eqref{cond:L1-Linfty}. Hence, this new condition gives rise to a larger class of elliptic operators with \(\omega\in A_\infty(\sigma)\) that is not covered by other conditions in the literature that we have discussed.

\medskip
The \(L^1\) Carleson condition on \(\partial_tA\) can also be put into context from a different point of view. In fact, Carleson conditions are a widely applied tool in the area of solvability of boundary value problems in the elliptic and parabolic setting. The famous DKP condition can be stated as 
\begin{align}
    \sup_{\Delta\subset\partial\Omega \textrm{ boundary ball}}\frac{1}{\sigma(\Delta)}\int_{T(\Delta)}|\nabla A(y,s)|^2t dx dt\leq C<\infty,\label{condition:TypicalL^2Carlesontypecond}
\end{align}
if we assume \(|\nabla A(x,t)|\leq \frac{C}{t}\), or with \(\mathrm{osc}_{(y,s)\in B(x,t,t/2)}|A(y,s)|\) instead of the term \(|\nabla A(y,s)|\), and originates from \cite{kenig_dirichlet_2001}. The DKP condition and versions thereof have been studied for elliptic and parabolic boundary value problems and yield solvability of the corresponding boundary value problem in many cases. The elliptic Dirichlet boundary value problem was studied in \cite{kenig_dirichlet_2001}, \cite{dindos_lp_2007}, \cite{hofmann_uniform_2021}, and \cite{david_small_2023}, and also for elliptic operators with complex coefficients (cf. \cite{dindos_regularity_2019}) or elliptic systems (cf. \cite{dindos__2021}). Furthermore, the DKP condition was also successfully studied for the elliptic regularity problem in \cite{dindos_boundary_2017}, \cite{mourgoglou_lp-solvability_2022}, \cite{dindos_etal_regularity_2023}, and \cite{feneuil_alternative_2023}. A helpful survey article of the elliptic setting is \cite{dindos_boundary_2023} which contains further references.
Lastly, we also have positive results for the parabolic Dirichlet and regularity boundary value problem in \cite{dindos_parabolic_2020} and \cite{dindos_lp_2024}. 

\medskip
In contrast to the typical DKP condition \eqref{condition:TypicalL^2Carlesontypecond}, the new Carleson condition in this work \eqref{condition:L^1Carlesontypecond} does not contain any derivative in any other direction than the transversal \(t-\)direction, but is of only \(L^1\)-type. Hence \eqref{condition:L^1Carlesontypecond} applies to a different class of operators.


\subsection*{Achknowledgements}

The author would like to thank Jill Pipher and Martin Dindo\v{s} for many helpful discussions, insights, and suggestions to improve on the presentation of the arguments.

\section{Notations and Setting}

Throughout this work \(\Delta=\Delta(P,r):=B(P,r)\cap\partial\Omega\) denotes a boundary ball centered in point \(P\in\partial\Omega\) with radius \(l(\Delta)=r>0\) and 
\[T(\Delta):=\{(x,t)\in\Omega; x\in\Delta, 0<t<l(\Delta)\}\]
its Carleson region. 
The cone with vertex in \(P\in \partial\Omega\) and aperture \(\alpha\) is denoted by
\[\Gamma_\alpha(P):=\{(x,t)\in \Omega; |x-P|<\alpha t\}.\] 
For an open set \(F\subset \Delta\) we define the saw-tooth region \(\Omega_\alpha(F):=\bigcup_{P\in F}\Gamma_\alpha(P)\) and the truncated saw-tooth region \(\tilde{\Omega}_\alpha(F):=\Omega_\alpha(F)\cap T(\Delta)\).
Furthermore, we set \(\mathcal{D}^\eta_k(\Delta)\) as the collection of certain boundary balls with radius \(\eta 2^{-k}\) such that their union covers \(\Delta\) but enlargements of the boundary balls have finite overlap. More explicitly, we ask that \(\chi_\Delta\leq\sum_{Q\in \mathcal{D}_k^\eta}\chi_Q\leq  \sum_{Q\in \mathcal{D}_k^\eta}\chi_{2Q}\leq N\) for some \(N\in\mathbb{N}\), where \(\chi_Q\) is the characteristic function over \(Q\).
Furthermore we denote by \((f)_E:=\fint_E f(x)dx=\frac{1}{\sigma(E)}\int_E f(x) dx\) the mean value of a function \(f\) on a set \(E\subset\mathbb{R}^n\).

\smallskip

For \(P\in \mathbb{R}^n=\partial\Omega\) we define the nontangential maximal function as
\[N_\alpha(u)(P):=\sup_{(x,t)\in \Gamma_\alpha(P)} |u(x,t)|,\]
the mean-valued nontangential maximal function as
\[\tilde{N}_\alpha(u)(P):=\sup_{(x,t)\in \Gamma_\alpha(P)} \Big(\fint_{\big\{(y,s); |y-x|^2+|s-t|^2<\frac{t^2}{4}\big\}}|u(y,s)|^2dyds\Big)^{1/2},\]
and a truncated version as
\[\tilde{N}^r_\alpha(u)(P):=\sup_{(x,t)\in \Gamma_\alpha(P)\cap B(x,t,r)} \Big(\fint_{\big\{(y,s); |y-x|^2+|s-t|^2<\frac{t^2}{4}\big\}}|u(y,s)|^2dyds\Big)^{1/2}.\]
Furthermore, we denote by
\[\mathcal{A}_\alpha(F)(P):=\Big(\int_{\Gamma_\alpha(P)} \frac{|F(x,t)|^2}{t^{n+1}}dxdt\Big)^{1/2}\qquad \textrm{ for }P\in \mathbb{R}^n=\partial\Omega\]
the area function.

\smallskip

We can now consider the \(L^p\)-Dirichlet boundary value problem.
\begin{defin}\label{def:L^pDirichletProblem}
We say the \textit{\(L^p\)-Dirichlet boundary value problem} is solvable for \(L\) if for all boundary data \(f\in C_c(\partial\Omega)\cap L^p(\Omega)\) the unique existent solution \(u\in W^{1,2}(\Omega)\) of 
\[\begin{cases} Lu=0 &\textrm{in }\Omega, \\ u=f &\textrm{on }\partial\Omega.\end{cases}\]
satisfies
\[\Vert N(u) \Vert_{L^p(\partial\Omega)}\leq C \Vert f\Vert_{L^p(\partial\Omega)},\]
where the constant \(C\) is independent of \(u\) and \(f\).
\end{defin}
As usual, we denote the elliptic measure by \(\omega\), the usual \(n\)-dimensional Hausdorff surface measure by \(\sigma\) and the Muckenhoupt and Reverse Hölder spaces by \( A_\infty(d\sigma)\) and \(B_q(d\sigma)\) respectively. The solvability of the \(L^p\) Dirichlet boundary value problem is equivalent to \(\omega\in B_{p'}(d\sigma)\), which means that solvability for some \(p\) is equivalent to \(\omega\in A_\infty(d\sigma)\). 

\medskip

In the setting of the upper half space we would like to introduce a bit more notation and specification. In the following we are 
considering the matrix
\begin{align} 
A(x,t)=\begin{pmatrix} \AP(x,t) & b(x,t) \\ c(x,t) & d(x,t)\end{pmatrix}\label{eq:DefinitionOfMatrixA}
\end{align}
for \((x,t)\in \Omega=\mathbb{R}^{n+1}_+\), where \(\AP\) is the upper \(n\) times \(n\) block of \(A\) and \(b,c,d\) are vector functions with co-domains \(\mathbb{R}^n\) or \(\mathbb{R}\) respectively. We are discussing the operator \(Lu(x,t):=\Div_{x,t}(A(x,t)\nabla_{x,t} u(x,t))\) for \(u\in W^{1,2}(\Omega)\) and the family of operators
\[L^sv(x):=\Div_x(\AP(x,s)\nabla_x v(x)) \qquad\textrm{ for }v\in W^{1,2}(\partial\Omega)=W^{1,2}(\mathbb{R}^n), s>0.\]
We tend to write \(\nabla=\nabla_x\) suppressing the index when dealing with the gradient only in the components of \(x\) and will clarify with \(\nabla_{x,t}\) where we mean the full gradient.

At last, we notice that from \(\frac{|\partial_t A|}{t}\leq C\) and the mixed \(L^1-L^\infty\) Carleson type condition \eqref{cond:L1-Linfty} we can deduce a mixed \(L^2-L^\infty\) Carleson type condition
\begin{align}
    \int_0^\infty\Vert\partial_s A(\cdot,s)\Vert_\infty^2 s ds\leq C<\infty.\label{condition:L^2Carlesontypecond}
\end{align}

A technical result we will need is the following lemma.
\begin{lemma}[\cite{Giaquinta+1984} Chapter V Lemma 3.1]\label{lemma:LEMMA5}
    Let \(f(t)\) be a nonnegative bounded function in \([r_0,r_1], r_0\geq 0\). Suppose that for \(r_0\leq s<t\leq r_1\) we have
    \[f(s)\leq (A(s-t)^{-\alpha}+B)+\sigma f(t)\]
    where \(A,B,\sigma, \alpha\) are nonnegative constants with \(0\leq \sigma<1\). Then for all \(r_0\leq \rho<R\leq r_1\) we have
    \[f(\rho)\leq c(A(R-\rho)^{-\alpha}+B)\]
    where c is a constant depending on \(\alpha\) and \(\sigma\).
\end{lemma}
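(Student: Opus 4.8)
\emph{Proof sketch.} The plan is to prove the lemma by the classical geometric iteration (``hole-filling'') argument: apply the hypothesis repeatedly along a sequence of radii that grows geometrically from \(\rho\) up to \(R\), and then sum the resulting geometric series, using \(0\le\sigma<1\) and the boundedness of \(f\) to kill the tail term.

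First I would fix \(r_0\le\rho<R\le r_1\), set \(M:=\sup_{[r_0,r_1]}|f|<\infty\), and pick a ratio \(\tau\in(0,1)\), to be specified below. Define \(t_0:=\rho\) and \(t_{i+1}:=t_i+(1-\tau)\tau^{i}(R-\rho)\) for \(i\ge0\). A short computation gives \(t_k=\rho+(1-\tau^{k})(R-\rho)\), so the sequence is strictly increasing, \(t_k\uparrow R\), and every \(t_k\) lies in \([\rho,R)\subset[r_0,r_1]\); hence the hypothesis applies to each pair \(t_i<t_{i+1}\) and yields
\[
f(t_i)\le A(t_{i+1}-t_i)^{-\alpha}+B+\sigma f(t_{i+1}).
\]
Iterating this inequality (an immediate induction on \(k\)) produces, for every \(k\ge1\),
\[
f(\rho)\le \sigma^{k}f(t_k)+\sum_{i=0}^{k-1}\sigma^{i}\bigl(A(t_{i+1}-t_i)^{-\alpha}+B\bigr).
\]

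Next I would choose \(\tau\) and pass to the limit. Since \(t_{i+1}-t_i=(1-\tau)\tau^{i}(R-\rho)\), we have \((t_{i+1}-t_i)^{-\alpha}=(1-\tau)^{-\alpha}(R-\rho)^{-\alpha}\tau^{-i\alpha}\), so the \(A\)-part of the sum equals \(A(1-\tau)^{-\alpha}(R-\rho)^{-\alpha}\sum_{i=0}^{k-1}(\sigma\tau^{-\alpha})^{i}\). Because \(\sigma<1\) we may take \(\tau\in(\sigma^{1/\alpha},1)\) when \(\alpha>0\) (and e.g. \(\tau=\tfrac12\) when \(\alpha=0\)); then \(q:=\sigma\tau^{-\alpha}<1\), and both \(\sum_{i\ge0}q^{i}=(1-q)^{-1}\) and \(\sum_{i\ge0}\sigma^{i}=(1-\sigma)^{-1}\) are finite. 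Moreover \(\sigma^{k}f(t_k)\le\sigma^{k}M\to0\). Letting \(k\to\infty\) therefore gives
\[
f(\rho)\le\frac{(1-\tau)^{-\alpha}}{1-\sigma\tau^{-\alpha}}\,A(R-\rho)^{-\alpha}+\frac{1}{1-\sigma}\,B\le c\bigl(A(R-\rho)^{-\alpha}+B\bigr),
\]
with \(c:=\max\bigl\{(1-\tau)^{-\alpha}(1-\sigma\tau^{-\alpha})^{-1},\,(1-\sigma)^{-1}\bigr\}\); since \(\tau\) was chosen as an explicit function of \(\sigma\) and \(\alpha\), this \(c\) depends only on \(\alpha\) and \(\sigma\), as claimed.

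The only delicate point — the ``main obstacle'', such as it is — is the choice of the ratio \(\tau\): it must be close enough to \(1\) that the increments \((1-\tau)\tau^{i}(R-\rho)\) telescope exactly to \(R-\rho\) (so the \(t_k\) remain admissible and converge to \(R\)), yet large enough that \(\sigma\tau^{-\alpha}<1\) so the geometric series converges; this balance is possible precisely because \(\sigma<1\). The boundedness of \(f\) enters only to discard \(\sigma^{k}f(t_k)\) in the limit, and nothing else is needed.
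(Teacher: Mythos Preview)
Your argument is correct and is exactly the classical geometric-iteration proof from Giaquinta; note that the paper does not supply its own proof of this lemma but merely cites \cite{Giaquinta+1984}, Chapter~V, Lemma~3.1, so there is nothing further to compare.
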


\subsection{Overview and sketch of the proof}
According to \cite{kenig_square_2016} it is enough to show the local square function bound
\begin{align}\sup_{\Delta\subset\partial\Omega}\sigma(\Delta)^{-1}\int_{T(\Delta)}\vert\nabla_{x,t} u\vert^2t dxdt\lesssim \Vert f\Vert^2_{L^\infty(\partial\Omega)}\label{SquarefctBound}\end{align}
to get \(\omega\in A_\infty(d\sigma)\).


Combining this with the idea from Lemma 2.14 in \cite{auscher_extrapolation_2001} we only need to show that for any boundary cube \(\Delta\subset \partial\Omega\) and any subcube \(Q\subset\Delta\), there exists a good open subset \(F\subset Q\) with \(|F|\geq (1-\gamma) |Q|\) and

\begin{align}\int_{\tilde{\Omega}_\alpha(F)}|\nabla_{x,t} u|^2t dxdt\leq \beta|Q|\label{eq:SqFctonSawtooth},\end{align}

where \(u\in W^{1,2}(\Omega)\) is a solution to boundary data \(f\) with \(\Vert f\Vert_{L^\infty}=1\). The involved constants can be chosen as \(0<\gamma<1\) and \(0<\beta<\infty\) and \(\alpha(\gamma)>1\) sufficiently large. 
\medskip

The first part of the proof follows the outline from \cite{hofmann_dirichlet_2022}. We fix some boundary data \(f\in L^\infty(\partial\Omega)\) with \(\Vert f\Vert_{\infty}=1\) and denote by \(u\in W^{1,2}(\Omega)\) the solution to the Dirichlet problem with this boundary data \(f\). We construct a good set \(F\) with all above requirements that depends on the matrix \(A\) and the chosen boundary ball (see Section \ref{section:GoodSetF}). By introducing a smooth cut-off function \(\psi\) on the sawtooth region we can make use of integration by parts to obtain
\begin{align*}
    \int_{\tilde{\Omega}_\alpha(F)}|\nabla_{x,t} u|^2t dxdt&\lesssim \int_\Omega A\nabla_{x,t} u\cdot \nabla_{x,t} u \psi^2tdxdt
    \\
    &=\int_\Omega A\nabla_{x,t} u\cdot \nabla_{x,t} (u \psi^2t)dxdt + \int_\Omega A\nabla_{x,t} u\cdot u\nabla_{x,t}(\psi^2t)dxdt.
\end{align*}
The first term vanishes due to the PDE for \(u\) and the second one gives rise to several integral terms, most of which we will bound by using elementary estimates and the condition \eqref{cond:L1-Linfty}. However, one of these integral expressions will be 
\[J_{211}=\int_{\Omega}c\cdot \nabla_x(u^2\psi^2) dxdt,\]
where \(c\) is the component in the last row of the matrix \(A\). Here we go back to the idea of a Hodge decomposition for the component \(c\) which originated in \cite{hofmann_square_2015}. In contrast to \cite{hofmann_square_2015} where \(c\) does \textit{not} depend on \(t\), here we have to deal with a family of Hodge decompositions
        \[c(x,t)\chi_{2\Delta}(x)=\AP(x,t)\nabla_x\phi^t(x) + h^t(x)\qquad \textrm{with \(x\)-divergence free }h^t,\]
and we have to develop new ways of controlling this family uniformly (see Section \ref{section:HodgeDecomposition}).

Using this Hodge decomposition, we write
\begin{align}
    J_{211}=\int_{\Omega}A\nabla_x\phi^t\cdot \nabla_x(u^2\psi^2) dxdt&=\int_{\Omega}A\nabla_x\theta\cdot \nabla_x(u^2\psi^2) dxdt\nonumber
    \\
    &\qquad + \int_{\Omega}A\nabla_x\rho\cdot \nabla_x(u^2\psi^2) dxdt\label{eq:ideaofproofIntegralterms}
\end{align}
for an approximation function \(\rho\) and the difference function \(\theta=\phi-\rho\). The idea here is to choose the approximation function \(\rho\) is such a way that the second integral can be bounded and that the difference to \(\phi\) is still good enough so that the first integral involving \(\theta\) can also be bounded. 

In \cite{hofmann_square_2015} the authors suggest the use of an ellipticized heat semigroup \(\tilde{\rho}(x):=e^{-t^2L_{||}}\tilde{\phi}\) where \(L_{||}=\mathrm{div}_x(\AP\nabla_x \cdot)\) and \(\tilde{\phi}\) comes from the Hodge decomposition of their operator. This choice of \(\rho\) gives them the necessary estimates on \(\theta=\phi-\rho\) and \(\rho\). In our setting, we have \(t\) dependencies in the operator \(L_{||}\) and the Hodge functions \(\phi^t\), whence we define analogously \(L^t=L^t_{||}:=\mathrm{div}_x(\AP(\cdot,t)\nabla_x \cdot)\),
\[\rho(x,t):=e^{-t^2L^t}\phi^t, \qquad \textrm{and}\qquad \theta(x,t)=\phi^t(x)-\rho(x,t).\]
It will turn out that this choice allows us to obtain the required bounds on \(\rho\) and \(\theta\). Continuing from \eqref{eq:ideaofproofIntegralterms} we can use integration by parts to move the correct derivatives on the correct functions for which we have corresponding area function bounds of \(\rho\) and \(\theta\) (see \reflemma{lemma:L^2estimatesForSquareFunctions} for the bounds we need). In doing so we will also need to repeat the same steps for the component \(b\) of the matrix.

A significant part of the work in this article lies in establishing the necessary properties of the family \((\phi^t)_t\) and these area function bounds using our new operator \(\rho\). In contrast to \cite{hofmann_square_2015}, we cannot use all the properties of the ellipticized heat semigroup because our \(\rho\) does not satisfy any PDE apriori.
What saves us is the observation that we can decouple the \(t\) dependencies in \(\rho\) through considering \(\rho\) as a special case of 
    \[w(x,t;s):= e^{-t^2L^s}\phi^s(x)\]
        with three variables. It turns out that \(\partial_s w\) satisfies an inhomogeneous parabolic PDE as a function in \(x\) and \(t\) (cf. Section \ref{section:rho}) which gives us an explicit representation of \(\partial_s w\) and hence the partial derivative of \(\rho\) in \(t\)-direction. The use of the resolved Kato conjecture, properties of the heat semigroup and our imposed condition \eqref{cond:L1-Linfty}/\eqref{condition:L^1Carlesontypecond} enable us to prove the necessary area function bounds on the derivatives of \(\rho\) which replace the area function bounds from \cite{hofmann_dirichlet_2022}.
        \medskip
        
        We introduce the Hodge decomposition in detail with useful properties in Section \ref{section:HodgeDecomposition}, before we show the representation of the different partial derivatives of \(\rho\) via decoupling as \(w(x,t;s)\) in Section \ref{section:rho}. Section \ref{section:Kato} recalls the Kato conjecture and useful properties of the heat semigroup and lists consequences for our operator \(\rho\) and its partial derivatives. After that we construct the good set \(F\) in Section \ref{section:GoodSetF} and finally prove the different area function bounds on each of the partial derivatives separately in Section \ref{section:areaFctBounds}. The penultimate section provides the proof of \refthm{thm:MainTheorem} and the last section shows the improvement \refthm{thm:MainTheoremn=1} in the case of the upper half plane.

\section{Hodge decomposition}\label{section:HodgeDecomposition}

First, we fix a boundary ball \(\Delta\) and this boundary ball will remain fixed for the remainder of this article unless stated otherwise. Recall also that we write \(\nabla=\nabla_x\) in the following.
For each \(s>0\) we can find a Hodge decomposition consisting of \(\phi^s\in W_0^{1,2}(3\Delta)\) and \( h^s\in L^2(\Omega)\), where \(h^s\) is \(x\)-divergence free and
\[c(x,s)\chi_{3\Delta}(x)=\AP(x,s)\nabla\phi^s(x)+h^s(x),\]
which means that for any test function \(v\in C_0^\infty(3\Delta)\)
\[\int_{3\Delta}\AP(x,s)\nabla\phi^s(x)\cdot\nabla v(x) dx=\int_{3\Delta}c(x,s)\cdot\nabla v(x)\]
holds. This implies (see also Proposition 4 of \cite{hofmann_dirichlet_2022}) 
\begin{align}\fint_{3\Delta}|\nabla\phi^s|^2 dx\leq C(n,\lambda_0,\Lambda_0),\label{equation:L2ofnablaphiBounded}\end{align}
and through differentiating of both sides with respect to \(s\) also that
\begin{align*}\int_{3\Delta} \AP(x,s)\nabla\partial_s\phi^s(x)\cdot\nabla v(x) dx&= - \int_{3\Delta}\partial_s \AP(x,s)\nabla\phi^s(x)\cdot\nabla v(x) dx 
\\
&\qquad + \int_{3\Delta}\partial_s c(x,s)\cdot\nabla v(x).\end{align*}
Inserting \(v=\partial_s\phi^s\) leads to

\begin{align*}
\lambda_0\Vert \nabla\partial_s\phi^s\Vert_{L^2(3\Delta)}^2&\leq \Big|\int_{3\Delta} \AP(x,s)\nabla\partial_s\phi^s\cdot\nabla\partial_s\phi^s dx\Big|
\\
&\leq\Big|\int_{3\Delta}\partial_s \AP(x,s)\nabla\phi^s(x)\nabla\partial_s\phi^s(x) dx\Big|
\\
&\qquad+ \Big|\int_{3\Delta}\partial_s c(x,s)\cdot\nabla\partial_s\phi^s(x) dx\Big|
\\
&\lesssim\Vert\partial_s \AP(\cdot,s)\Vert_{L^\infty(3\Delta)}\Vert \nabla\phi^s\Vert_{L^2(3\Delta)}\Vert \nabla\partial_s\phi^s\Vert_{L^2(3\Delta)} 
\\
&\qquad + \Vert\partial_s c(\cdot,s)\Vert_{L^\infty(3\Delta)}\Vert \nabla\partial_s\phi^s\Vert_{L^2(3\Delta)}|\Delta|^{1/2},
\end{align*}

and hence 

\begin{align}
\Vert \nabla\partial_s\phi^s\Vert_{L^2(3\Delta)}&\lesssim \Vert\partial_s\AP(\cdot,s)\Vert_{L^\infty(3\Delta)}(\Vert \nabla\phi^s\Vert_{L^2(3\Delta)}+|\Delta|^{1/2})\nonumber
\\
&\lesssim \Vert\partial_s \AP(\cdot,s)\Vert_{L^\infty(3\Delta)}|\Delta|^{1/2}.\label{equation:L2ofpartial_snablaphiBounded}
\end{align}

Analogously we can find a Hodge decomposition of \(b\), i.e. a family of functions \(\tilde{\phi}^s\) such that \(b\chi_{3\Delta}=\AP\nabla\tilde{\phi}^s+\tilde{h}^s\). All the results about \(\phi^s\) hold analogously for \(\tilde{\phi}^s\).

Due to Theorem 1 in \cite{mazya_sobolev_2011}, we obtain that the partial derivative of \(\phi^s\) with respect to \(s\) is not only a distributional derivative but also the weak derivative in the direction transversal to the boundary almost everywhere.

\section{Approximation function \(\rho\) and difference function \(\theta\)}\label{section:rho}

Let \(\eta>0\) be a parameter to be determined later in the proof of \refthm{thm:MainTheorem}. We define 
\[w(x,t;s):=e^{-tL^s}\phi^s(x)\]
as the solution to the ("t-independent") heat equation
\begin{align}\begin{cases} \partial_t w(x,t;s)-L^s w(x,t;s)=0 &\textrm{for }(x,t)\in \Omega, \\ w(x,0;s)=\phi^s(x) &\textrm{for }x\in\partial\Omega\end{cases}\label{eq:PDEforw}\end{align}
for fixed \(s>0\) using the heat semigroup.
We define the by \(\eta\) scaled approximation function with ellipticized homogeneity
\[\rho_\eta(x,s):=w(x,\eta^2s^2;s),\]
and the difference function
\[\theta_\eta(x,s):=\phi^s(x)-\rho_\eta(x,s).\]
The approximating function \(\rho_\eta(x,s)\) is not the solution of one heat equation anymore.  
We can think of \(\rho\) as the diagonalization of a family of solutions with elliptic homogeneity to different heat equations \(\partial_t-L^s\) with different with initial data \(\phi^s\) where \(s\) is the time at which we evaluate the corresponding solution.

\subsection{The different parts of the derivatives of \(\theta\)}

We will have to take the partial derivative of \(\theta_\eta\) in the \(s\)-component. Thereby, we see that
\begin{align*}
    \partial_s \theta_\eta(x,s)&=\partial_s \phi^s(x) - \partial_s w(x,\eta^2s^2;s)
    \\
    &=\partial_s\phi^s(x)-2\eta^2s\partial_t w(x,t;s)|_{t=\eta^2 s^2}-\partial_s w(x,t;s)|_{t=\eta^2 s^2}.
\end{align*}

We can calculate the second and third term more explicitly. For the first of them we get by the PDE \eqref{eq:PDEforw} that \(w(x,t,s)\) satisfies for fixed \(s\) that
\begin{align*}
    \partial_t w(x,t;s)|_{t=\eta^2 s^2}=L^s w(x,\eta^2 s^2;s).
\end{align*}

For the second one we need to work a bit more. To start with we can observe that \(\partial_sw\) satisfies a PDE. From \eqref{eq:PDEforw} we obtain
\begin{align*}
    \partial_s\partial_t w(x,t;s)&=\partial_s \Div(\AP(x,s)\nabla w(x,t;s))
    \\
    &=\Div(\partial_s \AP(x,s)\nabla w(x,t;s))+L^s\partial_sw(x,t;s).
\end{align*}
We can now set \(v_1(x,t)\) as the solution to
\[\begin{cases} \partial_t v_1(x,t)=L^s v_1(x,t) &\textrm{for }(x,t)\in\Omega, \\ v_1(x,0)=\partial_s\phi^s(x) &\textrm{for }(x,0)\in\partial\Omega.\end{cases}\]
and \(v_2(x,t)\) as the solution to
\[\begin{cases} \partial_t v_2(x,t)=L^s v_2(x,t) + \Div(\partial_s \AP(x,s)\nabla w(x,t;s)) &\textrm{for }(x,t)\in\Omega, \\ v_2(x,0)=0 &\textrm{for }(x,0)\in\partial\Omega.\end{cases}\]

Since \(\partial_s w(x,0;s)=\partial_s\phi^s\) we note that \(\partial_s w\) and \(v_1+v_2\) satisfy the same linear PDE with same boundary data and hence must be equal. Next, we can give explicit representations for \(v_1\) by the heat semigroup and for \(v_2\) by applying Duhamel's principle. These are
\[v_1(x,t;s)=e^{-tL^s}\partial_s\phi^s(x),\]
and
\[v_2(x,t;s)=\int_0^t e^{-(t-\tau)L^s}\Div(\partial_s \AP(x,s)\nabla w(x,\tau,s))d\tau.\]

Together we get for the derivative of \(\theta\) in the \(s\)-component
\begin{align*}
    \partial_s \theta_\eta(x,s)&=\partial_s\phi^s-2\eta^2 s L^s w(x,\eta^2 s^2;s)-e^{-\eta^2s^2L^s}\partial_s\phi^s(x)
    \\
    &\qquad-\int_0^{\eta^2s^2} e^{-(\eta^2s^2-\tau)L^s}\Div(\partial_s\AP(x,s)\nabla w(x,\tau;s))d\tau
    \\
    &=\partial_s\phi^s-2\eta^2 s L^s w(x,\eta^2 s^2;s)-e^{-\eta^2s^2L^s}\partial_s\phi^s(x)
    \\
    &\qquad-\int_0^{s} 2\eta^2\tau e^{-\eta^2(s^2-\tau^2)L^s}\Div(\partial_s\AP(x,s)\nabla w(x,\eta^2\tau^2;s))d\tau
    \\
    &\eqcolon \partial_s\phi^s(x)-w_t(x,s)-w_s^{(1)}(x,s)-w_s^{(2)}(x,s).
\end{align*}

We are going to need Caccioppoli type inequalities for the parts \(w_t\) and \(w_s^{(2)}\) of the derivative of \(\theta\). Since we ellipticized the heat semigroup in \(\rho\), we are expecting Caccioppoli inequalities for usual elliptic Whitney cubes.

\begin{lemma}\label{lemma:CaccTypeForw_t}
Let \(2\hat{Q}\subset\Omega\) be a Whitney cube with \(\mathrm{dist}(\hat{Q},\partial\Omega)\approx l(\hat{Q})\approx s\). If we set
\[v(x,t;s):=\partial_t w(x,t^2;s)=\partial_t e^{-t^2L^s}\phi^s,\]
we have the Caccioppoli type inequality
\[\int_{\hat{Q}}|\nabla v|^2dxdt\lesssim \frac{1}{s^2}\int_{2\hat{Q}}|v|^2dxdt.\]
\end{lemma}

\begin{proof}\label{remark:CaccForpartial_tw}
We observe that \(v\) satisfies the PDE
\begin{align*}
\partial_t v(x,t;s)&=\partial_t\partial_t w(x,t^2;s) = \partial_t (-2tL^s w(x,t^2;s)) 
\\
&= -2L^s w(x,t^2;s) - 2tL^s v(x,t;s)=\frac{v(x,t;s)}{t} - 2tL^s v(x,t^2;s).
\end{align*}

Analogously to the standard proof for Caccioppoli's inequality we take a smooth cut-off function \(\psi\in C^\infty(\Omega)\) that satisfies 
\[\psi\equiv 1 \textrm{ on } \hat{Q},\]
and
\[\psi\equiv 0 \textrm{ on } \Omega\setminus 2\hat{Q},\]
and
\[\Vert \nabla \psi \Vert \lesssim \frac{1}{l(\hat{Q})}\approx \frac{1}{s}.\]
Since \(s\) is fixed for the whole argument, we can now use above PDE to get
\begin{align*}
    \int_{\hat{Q}}|\nabla v|^2dxdt&\lesssim \frac{1}{l(\hat{Q})}\int_{2\hat{Q}}t|\nabla v|^2\psi^2dxdt\lesssim \frac{1}{l(\hat{Q})}\int_{2\hat{Q}}t A(x,s)\nabla v \cdot \nabla v \psi^2dxdt
    \\
    &\lesssim \frac{1}{l(\hat{Q})}\int_{2\hat{Q}}t L^s v  v \psi^2dxdt +\frac{1}{l(\hat{Q})}\int_{2\hat{Q}}t A(x,s)\nabla v \cdot \nabla\psi^2 v dxdt
    \\
    &\lesssim \frac{1}{l(\hat{Q})}\int_{2\hat{Q}} \partial_t v^2(x,t;s) \psi^2dxdt - \frac{1}{l(\hat{Q})^2}\int_{2\hat{Q}}  v^2 \psi^2dxdt 
    \\
    &\qquad+ \Big(\int_{2\hat{Q}}|\nabla v|^2 \psi^2dxdt\Big)\Big(\int_{2\hat{Q}}|\nabla \psi|^2 |v|^2dxdt\Big)^{1/2}
\end{align*}
Note that for the first term
\[\Big|\int_{2\hat{Q}} \partial_t v^2 \psi^2dxdt\Big|\leq\Big|\int_{2\hat{Q}} \partial_t (v^2 \psi^2)dxdt\Big| + \Big|\int_{2\hat{Q}}  v^2 \partial_t\psi^2dxdt\Big|\lesssim  \frac{1}{l(\hat{Q})}\int_{2\hat{Q}} v^2 dxdt,\]
and
\[\int_{2\hat{Q}}|\nabla \psi|^2 |v|^2dxdt\lesssim \frac{1}{l(\hat{Q})^2}\int_{2\hat{Q}} v^2 dxdt.\]
For the third term, hiding the first expression on the left side with the same estimate, we obtain
\begin{align*}
\int_{\hat{Q}}|\nabla v|^2dxdt\lesssim \frac{1}{s^2}\int_{2\hat{Q}}|v|^2dxdt.
\end{align*}

\end{proof}

 \begin{lemma}\label{Lemma:CacciopolliTypeInequalities}
     Let \(2\hat{Q}\subset \Omega\) be a Whitney cube with \(\mathrm{dist}(\hat{Q},\partial\Omega)\approx l(\hat{Q})\approx s\).
     If we set 
     \[v(x,t;s):=v_2(x,t^2;s)=\int_0^t 2\tau e^{-(t^2-\tau^2)L^s}\Div(\partial_s\AP(x,s)\nabla e^{-\tau^2 L^s}\phi^s(x))d\tau,\]
     then we have
     \[\int_{\hat{Q}} |\nabla_x v(x,t;s)|^2dxdt\lesssim \frac{1}{s^2}\int_{2\hat{Q}}|v(x,t;s)|^2dxdt + \int_{2\hat{Q}} |\partial_s\AP(x,s)\nabla e^{-t^2L^s}\phi^s|^2dxdt,\]
     and
     \begin{align*}
         \int_{\hat{Q}} |\nabla_x\partial_t v(x,t;s)|^2dxdt&\lesssim \frac{1}{s^2}\int_{2\hat{Q}}|\partial_t v(x,t;s)|^2dxdt
         \\
         & \qquad + \frac{1}{s^2}\int_{2\hat{Q}} |\partial_t\AP(x,s)\nabla e^{-t^2L^s}\phi^s|^2dxdt
         \\
         &\qquad + \int_{2Q}|\partial_t\AP(x,s)\nabla\partial_t e^{-t^2L^s}\phi^s|^2dxdt.
     \end{align*}
 \end{lemma}

\begin{proof}
    First we denote \(\tilde{v}:=\partial_t v\) to shorten notation. We observe that \(v\) and \(\tilde{v}\) satisfy the following PDEs
    \begin{align*}
        \partial_t v &= 2t\Div(\partial_s \AP\nabla e^{-t^2L^s}\phi^s) - \int_0^t 4t\tau L^se^{-(t^2-\tau^2)L^s}\Div(\partial_s \AP(x,s)\nabla e^{-\tau^2 L^s}\phi^s(x))d\tau
        \\
        &= 2t\Div(\partial_s \AP\nabla e^{-t^2L^s}\phi^s) - 2t L^s v,
    \end{align*}
    and
    \begin{align*}
        \partial_t \tilde{v} &= \Div(\partial_s \AP\nabla e^{-t^2L^s}\phi_s) + 2t\Div(\partial_s \AP\nabla\partial_t e^{-t^2L^s}\phi_s) - 2 L^s v - 2t L^s \tilde{v}.
    \end{align*}

    Let \(\psi\) be a smooth cut-off with \(\psi\equiv 1\) on \(\hat{Q}\) and \(\psi\equiv 0\) on \(\Omega\setminus 2\hat{Q}\) with \(|\nabla \psi|+|\partial_t\psi|\lesssim \frac{1}{s}\) as in the previous proof. First for \(v\), we calculate

    \begin{align*}
        \int_{\hat{Q}} |\nabla v(x,t;s)|^2dxdt&\leq \frac{1}{l(\hat{Q})}\int_{2\hat{Q}} tA(x,s)\nabla v(x,t;s)\cdot \nabla v(x,t;s) \psi^2(x,t)dxdt
        \\
        &=\frac{1}{l(\hat{Q})}\int_{2\hat{Q}} tL^s\nabla v(x,t;s) \nabla v(x,t;s) \psi^2(x,t)dxdt 
        \\
        &\qquad- \frac{1}{l(\hat{Q})}\int_{2\hat{Q}}t A(x,s)\nabla v(x,t;s)\cdot v(x,t;s) \nabla\psi^2(x,t)dxdt
        \\
        &=:I_1+I_2
    \end{align*}
    For \(I_2\) we directly get for a small \(\sigma\)
    \begin{align*}I_2&\lesssim \Big(\int_{2\hat{Q}}|\nabla v\psi|^2dxdt\Big)^{1/2}\Big(\int_{2\hat{Q}}|\nabla\psi v|^2dxdt\Big)^{1/2}
    \\
    &\lesssim \sigma\int_{2\hat{Q}}|\nabla v\psi|^2dxdt + C_\sigma\frac{1}{l(\hat{Q})^2}\int_{2\hat{Q}}|v|^2dxdt,\end{align*}
    and can hide the first term on the left side.

    For \(I_1\) we have by use of the PDE for \(v\) and integration by parts
    \begin{align*}
       I_1&\lesssim -\frac{1}{l(\hat{Q})}\int_{2\hat{Q}}\partial_t v v \psi^2 dxdt + \int_{2\hat{Q}} 2\Div(\partial_s\AP\nabla e^{-t^2L^s}\phi^s) v\psi^2dxdt
       \\
       &\lesssim \frac{1}{l(\hat{Q})}\int_{2\hat{Q}}\partial_t (v^2 \psi^2) dxdt + \frac{1}{l(\hat{Q})}\int_{2\hat{Q}} v^2 \partial_t\psi^2 dxdt
       \\
       & \qquad+ \int_{2\hat{Q}} 2\partial_s\AP\nabla e^{-t^2L^s}\phi^s (\nabla v\psi^2 + v\nabla\psi^2)dxdt
       \\
       &\lesssim \frac{1}{l(\hat{Q})^2}\int_{2\hat{Q}} |v|^2 dxdt + C_\sigma\int_{2\hat{Q}} |\partial_s \AP(x,s)\nabla e^{-t^2L^s}\phi^s|^2dxdt + \sigma\int_{2\hat{Q}} |\nabla v|^2\psi^2 dxdt
    \end{align*}
    Hiding the last term on the left side gives in total
    \[\int_{\hat{Q}} |\nabla v(x,t;s)|^2dxdt\lesssim \frac{1}{s^2}\int_{2\hat{Q}} |v(x,t;s)|^2 dxdt + \int_{2\hat{Q}} |\partial_s \AP(x,s)\nabla e^{-t^2L^s}\phi^s|^2dxdt,\]
    which completes the proof of the first Caccioppoli type inequality.

    \bigskip

    For the second one for \(\tilde{v}\) we proceed analogously with just more terms to handle. We have
    \begin{align*}
        \int_{\hat{Q}} |\nabla \tilde{v}(x,t;s)|^2dxdt&\leq \frac{1}{l(\hat{Q})}\int_{2\hat{Q}} tA(x,s)\nabla \tilde{v}(x,t;s)\cdot \nabla \tilde{v}(x,t;s) \psi^2(x,t)dxdt
        \\
        &=\frac{1}{l(\hat{Q})}\int_{2\hat{Q}}t L^s \tilde{v}(x,t;s) \tilde{v}(x,t;s) \psi^2(x,t)dxdt 
        \\
        &\qquad- \frac{1}{l(\hat{Q})}\int_{2\hat{Q}}t A(x,s)\nabla \tilde{v}(x,t;s)\cdot \tilde{v}(x,t;s) \nabla\psi^2(x,t)dxdt
        \\
        &=:II_1+II_2
    \end{align*}
    The term \(II_2\) works completely analogous to \(I_2\). For \(II_1\) however, we have
    \begin{align*}
       II_1&\lesssim \frac{1}{l(\hat{Q})}\int_{2\hat{Q}} 2\Div(\partial_s\AP\nabla e^{-t^2L^s}\phi^s) \tilde{v}\psi^2dxdt + \int_{2\hat{Q}} 2\Div(\partial_s\AP\nabla \partial_t e^{-t^2L^s}\phi^s) \tilde{v}\psi^2dxdt
       \\
       &\qquad-\frac{1}{l(\hat{Q})}\int_{2\hat{Q}} L^s v \tilde{v}\psi^2 dxdt-\frac{1}{l(\hat{Q})}\int_{2\hat{Q}}\partial_t \tilde{v} \tilde{v} \psi^2 dxdt
    \end{align*}
    Using integration by parts on the first, second and forth and the PDE for \(v\) on the third term yields
    \begin{align*}
       II_1&\lesssim \frac{1}{l(\hat{Q})}\int_{2\hat{Q}} 2\partial_s\AP\nabla e^{-t^2L^s}\phi^s \cdot\nabla\tilde{v}\psi^2dxdt + \int_{2\hat{Q}} 2\partial_s\AP\nabla \partial_t e^{-t^2L^s}\phi^s \cdot\nabla\tilde{v}\psi^2dxdt
       \\
       &\qquad +\frac{1}{l(\hat{Q})}\int_{2\hat{Q}}2 \partial_s\AP\nabla e^{-t^2L^s}\phi^s \tilde{v}\cdot\nabla\psi^2dxdt + \int_{2\hat{Q}} 2\partial_s\AP\nabla \partial_t e^{-t^2L^s}\phi^s \tilde{v}\cdot\nabla\psi^2dxdt
       \\
       &\qquad-\frac{1}{l(\hat{Q})^2}\int_{2\hat{Q}}  \partial_t v \tilde{v}\psi^2 dxdt - \frac{1}{l(\hat{Q})}\int_{2\hat{Q}} 2\Div(\partial_s\AP\nabla e^{-t^2L^s}\phi^s) \tilde{v}\psi^2dxdt 
       \\
       &\qquad-\frac{1}{2l(\hat{Q})}\int_{2\hat{Q}}\partial_t (\tilde{v}^2 \psi^2) dxdt +\frac{1}{2l(\hat{Q})}\int_{2\hat{Q}}\tilde{v}^2 \partial_t \psi^2 dxdt.
    \end{align*}
    Furthermore, we recall that \(\partial_t v=\tilde{v}\) and proceeding with all the eight integrals in the from above known ways we get
    \begin{align*}
        II_1&\lesssim C_\sigma\frac{1}{l(\hat{Q})^2}\int_{2\hat{Q}} |\partial_s\AP(x,s)\nabla e^{-t^2L^s}\phi^s|^2dxdt + \sigma \int_{2\hat{Q}} |\nabla \tilde{v}\psi|^2 dxdt
        \\
        &\qquad + C_\sigma\int_{2\hat{Q}} |\partial_s\AP(x,s)\nabla \partial_t e^{-t^2L^s}\phi^s|^2dxdt + \sigma\int_{2\hat{Q}} |\nabla \tilde{v}\psi|^2 dxdt
        \\
        &\qquad+ \frac{1}{l(\hat{Q})^2}\int_{2\hat{Q}} |\partial_s\AP(x,s)\nabla e^{-t^2L^s}\phi^s|^2dxdt + \frac{1}{l(\hat{Q})^2}\int_{2\hat{Q}} |\tilde{v}\psi|^2 dxdt
        \\
        &\qquad + \int_{2\hat{Q}} |\partial_s\AP(x,s)\nabla \partial_t e^{-t^2L^s}\phi^s|^2dxdt + \frac{1}{l(\hat{Q})^2}\int_{2\hat{Q}} | \tilde{v}\psi|^2 dxdt
        \\
        &\qquad + \frac{1}{l(\hat{Q})^2}\int_{2\hat{Q}} |\tilde{v}|^2 dxdt + \frac{1}{l(\hat{Q})^2}\int_{2\hat{Q}} |\tilde{v}|^2 dxdt.
    \end{align*}
    The last step requires hiding the terms with a small factor \(\sigma\) on the left side, which completes the proof.

\end{proof}

\section{Kato conjecture and properties of the heat semigroup}
\label{section:Kato}

The following two results are the solution to the Kato conjecture which was fully resolved in \cite{Auscher_Kato} for \(p=2\). The \(L^p\) theory for other \(p\) was first fully established in \cite{auscher_necessary_2007} with partial \(L^p\) theory results appearing earlier (please refer to the introduction of \cite{auscher_necessary_2007} for the full history). Note that we define \(\dot W^{1,p}(\mathbb{R}^n)\) as the closure of \(C_0^\infty(\mathbb{R}^n)\) under the seminorm given by \(\Vert f\Vert_{\dot W^{1,p}}:=\Vert \nabla f\Vert_{L^p(\mathbb{R}^n)}\). The Kato \(L^p\) theory for the most general elliptic operator is shown in Thm 4.77 in \cite{hofmann_lp_2022}.
\begin{prop}\label{prop:KatoConjecture}
    For a function \(f\in \dot W^{1,p}(\mathbb{R}^n)\) we have
    \[\Vert L^{1/2} f\Vert_{L^p(\mathbb{R}^n)}\leq C(n,\lambda_0,\Lambda_0)\Vert \nabla f\Vert_{L^p(\mathbb{R}^n)}\]
    for all \(1<p<\infty\). Furthermore, there exists \(\varepsilon_1>0\) such that if \(1<p<2+\varepsilon_1\) then
    \[\Vert \nabla f\Vert_{L^p(\mathbb{R}^n)}\leq C(n,\lambda_0,\Lambda_0)\Vert L^{1/2} f\Vert_{L^p(\mathbb{R}^n)}.\]
\end{prop}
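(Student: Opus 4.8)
The statement is quoted verbatim from \cite{hofmann_lp_2022}, so strictly there is nothing to prove; nonetheless I sketch the structure of the argument behind it. Since here each \(A_\parallel(\cdot,s)\) is a \emph{real} (not necessarily symmetric) uniformly elliptic matrix with bounded measurable coefficients, De Giorgi--Nash--Moser theory applies to \(L^s\), giving pointwise Gaussian upper bounds and interior Hölder continuity for the kernel of \(e^{-tL^s}\), and in particular Gaussian-type off-diagonal decay for the families \(e^{-tL^s}\) and \(\sqrt{t}\,\nabla e^{-tL^s}\). The plan is to establish the case \(p=2\) first and then extrapolate to the stated range of \(p\).

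\textbf{Step 1: the case \(p=2\).} The two-sided bound \(\|L^{1/2}f\|_{L^2(\mathbb{R}^n)}\approx\|\nabla f\|_{L^2(\mathbb{R}^n)}\) is precisely the resolution of the Kato square root problem for divergence-form operators (cf. \cite{Auscher_Kato}). Via the functional-calculus identity \(L^{1/2}f=c\int_0^\infty L(I+t^2L)^{-1}f\,dt\), proving \(\|L^{1/2}f\|_{L^2}\lesssim\|\nabla f\|_{L^2}\) reduces to a square-function estimate of the form \(\int_0^\infty\|tL(I+t^2L)^{-1}f\|_{L^2}^2\,\tfrac{dt}{t}\lesssim\|\nabla f\|_{L^2}^2\); this in turn is handled by a \(Tb\)-type theorem in which one tests the relevant quadratic form against functions \(b^t_Q:=(I+t^2L)^{-1}(\text{affine coordinate})\) adapted to \(L\) on dyadic cubes \(Q\), the non-divergence error being absorbed through a Carleson-measure bound proved by a stopping-time (sawtooth) argument. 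The analytic inputs are the \(L^2\) off-diagonal (Caccioppoli--Gaffney--Davies) bounds for \(e^{-t^2L}\), \(t\nabla e^{-t^2L}\), \((I+t^2L)^{-1}\) and \(t\nabla(I+t^2L)^{-1}\).

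\textbf{Step 2: extrapolation to \(L^p\).} For the first inequality \(\|L^{1/2}f\|_{L^p}\lesssim\|\nabla f\|_{L^p}\) one writes \(L^{1/2}=c\int_0^\infty Le^{-tL}\,\tfrac{dt}{\sqrt t}\) and runs a Calderón--Zygmund argument adapted to the semigroup (Blunck--Kunstmann / Auscher--Martell extrapolation): the \(L^2\) bound of Step~1 together with the Gaussian off-diagonal decay upgrades the estimate to all \(1<p<\infty\). The reverse inequality is the \(L^p\)-boundedness of the Riesz transform \(\nabla L^{-1/2}=c\int_0^\infty \nabla e^{-tL}\,\tfrac{dt}{\sqrt t}\); for \(1<p\le 2\) the Gaussian bounds place us in the Coulhon--Duong framework, where \(\nabla L^{-1/2}\) is of weak type \((1,1)\) and hence bounded on \(L^p\) by interpolation with the \(L^2\) bound of Step~1, while for \(p\) in a small interval \([2,2+\varepsilon_1)\) the \(L^2\) gradient bound self-improves through the open-endedness of reverse-Hölder-type estimates for \(\sqrt t\,\nabla e^{-tL}\) (again Blunck--Kunstmann / Auscher--Martell, or a good-\(\lambda\) inequality). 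Combining the two ranges yields \(1<p<2+\varepsilon_1\).

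\textbf{Main obstacle.} The genuinely hard ingredient is Step~1 — the \(L^2\) Kato estimate itself, and specifically the Carleson-measure/stopping-time control of the non-divergence-form error term in the \(Tb\) reduction; everything else is a fairly standard application of semigroup-adapted singular-integral and extrapolation machinery. The only remaining point requiring care is the bookkeeping of the critical exponents \(p_\pm(L)\), which guarantees that some \(\varepsilon_1>0\) above \(2\) is admissible.
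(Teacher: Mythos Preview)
Your proposal is correct: the paper does not prove this proposition but simply cites it as Theorem~4.77 in \cite{hofmann_lp_2022}, exactly as you note in your opening sentence. The additional sketch you supply of the \(p=2\) Kato argument and its \(L^p\) extrapolation is accurate in outline and goes beyond what the paper itself provides.
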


The classical Kato solution is the following case, the \(L^2\) case.
\begin{prop}\label{cor:NablaL^-1/2Bounded}
    For a function \(f\in L^2(\mathbb{R}^n)\) we have
    \[\Vert \nabla L^{-1/2} f\Vert_{L^2(\mathbb{R}^n)}\approx\Vert f\Vert_{L^2(\mathbb{R}^n)}\]
    and if f is a vector valued function
    \[\Vert L^{-1/2} \Div(f)\Vert_{L^2(\mathbb{R}^n)}\approx\Vert f\Vert_{L^2(\mathbb{R}^n)}.\]
\end{prop}

The following result assumes a uniform elliptic \(t\)-independent operator \(L\), i.e. \(L_{||}u(x)=\Div_x(\AP(x)\nabla_x u(x))\) for \(x\in\mathbb{R}^n\).

\begin{prop}[Proposition 4.3  from \cite{hofmann_dirichlet_2022}]\label{prop:L2NormBoundsOfHeatSemigroup}
If \(L_{||}\) is t-independent then the following norm estimates hold for \(f\in L^2(\mathbb{R}^n),l\in \mathbb{N}_0, t>0\) and constants \(c_l,C>0\):

\begin{itemize}
    \item \(\Vert \partial_t^l e^{-tL_{||}}f\Vert_{L^2(\mathbb{R}^n)}\leq c_l t^{-l} \Vert f\Vert_{L^2(\mathbb{R}^n)}\),
    \item \(\Vert \nabla e^{-tL_{||}}f\Vert_{L^2(\mathbb{R}^n)}\leq C t^{-1/2} \Vert f\Vert_{L^2(\mathbb{R}^n)}\).
\end{itemize}
\end{prop}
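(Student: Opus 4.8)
The plan is to read $L$ in this section as the maximal accretive operator on $L^2(\mathbb{R}^n)$ associated with the sesquilinear form $\mathfrak{a}(u,v):=\int_{\mathbb{R}^n}\AP\nabla u\cdot\overline{\nabla v}$, so that $\langle Lu,v\rangle=\mathfrak{a}(u,v)$ and $e^{-tL}$ is the heat semigroup solving $\partial_t w=\Div(\AP\nabla w)$. By \eqref{eq:DefinitionOfUniformElliptic} the form is bounded, $|\mathfrak{a}(u,v)|\le\lambda_0^{-1}\|\nabla u\|_{L^2}\|\nabla v\|_{L^2}$, and accretive with $\operatorname{Re}\mathfrak{a}(u,u)\ge\lambda_0\|\nabla u\|_{L^2}^2$, so the numerical range of $L$ lies in a sector $\{z:|\arg z|\le\omega\}$ with $\omega=\omega(\lambda_0)<\pi/2$; since moreover $L+1$ is boundedly invertible by Lax--Milgram, $L$ is $m$-sectorial and $-L$ generates a contractive analytic semigroup on $L^2(\mathbb{R}^n)$, analytic in a sector of half-angle $\pi/2-\omega$. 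All constants below depend only on $n$, $\lambda_0$, $\Lambda_0$.

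For the first family of bounds I would use $\partial_t^l e^{-tL}=(-L)^l e^{-tL}$. The case $l=0$ is the contractivity of the semigroup. For $l\ge1$, the uniform boundedness of $z\mapsto e^{-zL}$ on the sector together with a Cauchy-integral estimate for its $s$-derivative gives the base bound $\|Le^{-sL}\|_{L^2\to L^2}\lesssim s^{-1}$, and then the semigroup law in the form $L^l e^{-tL}=(Le^{-tL/l})^l$ upgrades this to $\|L^l e^{-tL}\|_{L^2\to L^2}\le c_l t^{-l}$; evaluating on $f$ gives the claim.

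For the gradient bound, put $u_t:=e^{-tL}f$ (which lies in the form domain for $t>0$) and combine ellipticity with the $l=1$ estimate:
\[
\lambda_0\|\nabla u_t\|_{L^2}^2\le\operatorname{Re}\mathfrak{a}(u_t,u_t)=\operatorname{Re}\langle Lu_t,u_t\rangle\le\|Lu_t\|_{L^2}\|u_t\|_{L^2}\le c_1\,t^{-1}\|f\|_{L^2}^2,
\]
so $\|\nabla e^{-tL}f\|_{L^2}\le C\,t^{-1/2}\|f\|_{L^2}$. Equivalently one can factor $\nabla e^{-tL}f=\nabla L^{-1/2}\bigl(L^{1/2}e^{-tL}f\bigr)$ and invoke \refcor{cor:NablaL^-1/2Bounded} together with the fractional analytic-semigroup bound $\|L^{1/2}e^{-tL}\|_{L^2\to L^2}\lesssim t^{-1/2}$.

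An alternative that avoids the abstract analyticity input is a parabolic energy argument: testing $\partial_t u=\Div(\AP\nabla u)$ against $u$ yields contractivity and $\int_0^\infty\|\nabla u_t\|_{L^2}^2\,dt\lesssim\|f\|_{L^2}^2$, and a Caccioppoli-in-time / monotonicity argument promotes this to the pointwise bound $t\,\|\nabla u_t\|_{L^2}^2\lesssim\|f\|_{L^2}^2$; the higher-order time derivatives follow by induction since each $\partial_t^k u$ again solves the same heat equation. The only delicate point --- and the reason I would favour the semigroup formulation --- is that $\AP$ is not assumed symmetric, so $\tfrac{d}{dt}\mathfrak{a}(u_t,u_t)\ne 2\operatorname{Re}\mathfrak{a}(\partial_t u_t,u_t)$ and the energy argument has to be organised carefully through the symmetric/antisymmetric decomposition of $\AP$; the sectoriality package encodes exactly the needed estimates with constants uniform in the ellipticity ratio, and the scalings $t^{-l}$, $t^{-1/2}$ are then forced by dilation invariance of the $t$-independent problem.
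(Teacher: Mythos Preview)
Your argument is correct and is the standard functional-analytic proof: the sectoriality of $L$ gives analyticity of $e^{-tL}$, which yields $\Vert L^l e^{-tL}\Vert\lesssim t^{-l}$, and then the form inequality $\lambda_0\Vert\nabla u_t\Vert^2\le\operatorname{Re}\langle Lu_t,u_t\rangle$ converts the $l=1$ case into the gradient bound. Note, however, that the paper does not give its own proof here; the proposition is simply quoted from \cite{hofmann_dirichlet_2022} (Proposition~4.3) as a black box, so there is no argument in the paper to compare against.
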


The following proposition is a generalization of Equation (5.6) in \cite{hofmann_lp_2022} and its proof also borrows the main ideas from there.

\begin{prop}\label{prop:L2NormGoesTo0}
If we have a compact set \(K\subset\mathbb{R}^n\), a family \(f_s\in W_0^{1,2}(K)\) and a family of uniform elliptic operators \(L^s\) with the same constants \(\lambda_0\) and \(\Lambda_0\) then
\[\lim_{s\to \infty}\Vert e^{-sL^s}f_s\Vert_{L^2(\mathbb{R}^n)}=0.\]
\end{prop}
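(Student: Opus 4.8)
The plan is to bound the $L^2$ norm uniformly by a constant times $\|f_s\|_{L^2}$ together with a decaying factor in $s$, using the semigroup property and the a priori energy estimate for the parabolic flow. First I would split the semigroup as $e^{-sL^s} = e^{-(s/2)L^s}\,e^{-(s/2)L^s}$ and set $g_s := e^{-(s/2)L^s}f_s$. By \refprop{prop:L2NormBoundsOfHeatSemigroup} applied to the $t$-independent operator $L^s$ (whose constants are uniform in $s$), we have $\|g_s\|_{L^2} \le \|f_s\|_{L^2}$ and, more usefully, a gradient bound $\|\nabla g_s\|_{L^2} \lesssim s^{-1/2}\|f_s\|_{L^2}$. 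The point of the first half-time step is to gain a gradient bound with a factor $s^{-1/2}$; the point of the second half-time step will be to convert that gradient bound into decay of the $L^2$ norm by exploiting that $g_s$ lives (morally) on the compact set $K$ or has controlled low-frequency content.

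Next I would run the standard energy identity for $u_s(t) := e^{-tL^s}g_s$ on the interval $t\in[0,s/2]$: testing $\partial_t u_s = L^s u_s$ against $u_s$ gives $\frac12\frac{d}{dt}\|u_s(t)\|_{L^2}^2 = -\int \AP(x,s)\nabla u_s\cdot\nabla u_s \le -\lambda_0\|\nabla u_s(t)\|_{L^2}^2$. Integrating in $t$ and using a Poincaré-type inequality is the crux: since $f_s \in W_0^{1,2}(K)$ with $K$ compact, at time $0$ the function $g_s$ is not compactly supported, but $u_s(t)$ for small $t$ is close to $f_s$ in a sense that lets us invoke a Poincaré inequality on a fixed enlargement of $K$, or alternatively one controls the flow directly from $f_s$: run the energy identity from $t=0$ with $u_s(0)=f_s$, so that $\|u_s(t)\|_{L^2}$ is nonincreasing and $\lambda_0\int_0^s \|\nabla u_s(t)\|_{L^2}^2\,dt \le \frac12\|f_s\|_{L^2}^2$. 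Combining this with the Poincaré inequality $\|u_s(t)\|_{L^2(\mathbb{R}^n)} \le C(K)\|\nabla u_s(t)\|_{L^2}$ — valid as long as $u_s(t)$ is supported in (or sufficiently concentrated near) a fixed dilate of $K$, which follows from finite speed of propagation / Davies–Gaffney estimates with exponentially small tails — yields a differential inequality $\frac{d}{dt}\|u_s(t)\|_{L^2}^2 \le -\frac{2\lambda_0}{C(K)^2}\|u_s(t)\|_{L^2}^2$, hence $\|u_s(s)\|_{L^2}^2 \le e^{-cs/C(K)^2}\|f_s\|_{L^2}^2 \to 0$.

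The main obstacle I anticipate is precisely the appearance of a genuine decay mechanism: $L^s$ has no spectral gap on $\mathbb{R}^n$, so without using the localization of $f_s$ in the compact set $K$ one gets only $\|e^{-sL^s}f_s\|_{L^2}\le\|f_s\|_{L^2}$, which is not enough. The Poincaré inequality needs a fixed bounded region, so I must first establish that the heat evolution $u_s(t)$, starting from data supported in $K$, remains essentially supported in a fixed neighborhood of $K$ on the time scale $t\le s$ — but that is false for large $t$ (heat spreads like $\sqrt t$). So the honest route, following the cited Equation (5.6) of \cite{hofmann_lp_2022}, is likely to bound $\|e^{-sL^s}f_s\|_{L^2}$ by interpolating the Davies–Gaffney off-diagonal decay (which says mass escaping distance $R$ from $K$ costs $e^{-R^2/(cs)}$) against a crude $L^2$ bound, and to use that $\|f_s\|_{L^2}$ is fixed-order while $\|\nabla(e^{-(s/2)L^s}f_s)\|_{L^2}\lesssim s^{-1/2}$: the gradient decay forces the function to flatten out, and since its total mass near $K$ cannot grow, flattening plus bounded energy on all of $\mathbb{R}^n$ forces the $L^2$ norm to vanish. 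Concretely, I would estimate $\|e^{-sL^s}f_s\|_{L^2}^2$ by writing it as $\langle e^{-sL^s}f_s, e^{-sL^s}f_s\rangle = \langle f_s, e^{-2sL^s}f_s\rangle$ and bounding $|\langle f_s, e^{-2sL^s}f_s\rangle|$ by $\|f_s\|_{L^2(K)}\|e^{-2sL^s}f_s\|_{L^2(K)}$, then controlling $\|e^{-2sL^s}f_s\|_{L^2(K)}$ via the gradient estimate and Poincaré \emph{on the fixed set $K$} (legitimate here because we only test against something supported in $K$): $\|e^{-2sL^s}f_s\|_{L^2(K)}\le C(K)\|\nabla e^{-2sL^s}f_s\|_{L^2(\mathbb{R}^n)}\lesssim C(K)s^{-1/2}\|f_s\|_{L^2}\to 0$. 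I expect the write-up to follow this last line, with \reflemma{lemma:LEMMA5} or a direct Gronwall argument only if the cleaner duality-plus-gradient-decay estimate needs propping up.
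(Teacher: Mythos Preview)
Your final duality-plus-gradient-decay line has two problems. The minor one: $L^s$ need not be self-adjoint here (the matrix $A$ is not assumed symmetric), so $\langle e^{-sL^s}f_s,e^{-sL^s}f_s\rangle$ equals $\langle f_s,e^{-s(L^s)^*}e^{-sL^s}f_s\rangle$, not $\langle f_s,e^{-2sL^s}f_s\rangle$; this is fixable since the adjoint has the same ellipticity constants. The real gap is the Poincar\'e step. The inequality $\|g\|_{L^2(K)}\le C(K)\|\nabla g\|_{L^2(\mathbb{R}^n)}$ requires either $g|_{\partial K}=0$ or $\int_K g=0$, and $g=e^{-s(L^s)^*}e^{-sL^s}f_s$ satisfies neither. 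Your parenthetical ``legitimate because we only test against something supported in $K$'' would let you subtract the mean $\bar g_K$ and control $\langle f_s,g-\bar g_K\rangle$ via Poincar\'e--Wirtinger, but the leftover term $\bar g_K\int_K f_s$ is uncontrolled: $f_s\in W_0^{1,2}(K)$ does not force $\int_K f_s=0$. Concretely, for $n=1$, $L^s=-\Delta$, and any fixed $f$ with $\int f\ne 0$, one has $\|e^{-2s\Delta}f\|_{L^2(K)}\sim s^{-1/2}$ while $\|\nabla e^{-2s\Delta}f\|_{L^2(\mathbb{R})}\sim s^{-3/4}$, so the claimed bound fails as $s\to\infty$.

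What actually produces decay is the compact support of $f_s$ combined with $L^1\to L^2$ smoothing of the semigroup (Gaussian kernel upper bounds, uniform in the ellipticity constants): $\|e^{-sL^s}f_s\|_{L^2}\lesssim s^{-n/4}\|f_s\|_{L^1}\le s^{-n/4}|K|^{1/2}\|f_s\|_{L^2}$, which tends to $0$ once $\sup_s\|f_s\|_{L^2}<\infty$ (an assumption tacitly used both in your argument and in the paper's application). The paper's own proof is not written out but defers to \cite{hofmann_lp_2022} and an approximation $\varphi_\epsilon^s$ of $f_s$ sharing the same support; the role of the fixed compact $K$ is precisely to make the relevant constants uniform in $s$, not to license a Poincar\'e inequality for the evolved function.
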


\begin{proof}
This proof works analogously to the proof of  Equation (5.6) in \cite{hofmann_lp_2022}. Note that the uniform support of \(f_s\) ensures that the support of the chosen \(\varphi_\epsilon^s\) is also subset of \(K\) and has uniformly bounded measure.
\end{proof}

The kernel of the heat semigroup admits certain kernel bounds.

\begin{prop}[Kernel bounds, Prop 4.3 in \cite{hofmann_lp_2022} or Thm 6.17 in \cite{ouhabaz_analysis_2004}]
Let \(l\in\mathbb{N}\) and \(K_t(x,y)\) the kernel of the operator \(e^{-tL_{||}}\), then there exists \(C=C(n,\lambda)>0,\beta=\beta(n,\lambda)\in (0,1)\) such that 
    \begin{align}|\partial^l_tK_t(x,y)|\leq C_lt^{-\frac{n}{2}-l}e^{-\frac{\beta|x-y|^2}{t}}.\label{eq:kernelbounds}\end{align}
\end{prop}

We have the two following important results
\begin{prop}\label{Proposition11}[Proposition 11 in \cite{hofmann_dirichlet_2022}]
    Let \(\eta>0,\alpha>0\) and \(L_{||}\) be \(t\)-independent. Then we have for all \((y,t)\in\Gamma_{\eta\alpha}(x)\)
    \[\eta^{-1}\partial_t e^{-(\eta t)^2L_{||}}f(y)\lesssim M(\nabla f)(x),\]
    and hence
    \[\Vert \eta^{-1} N_{\eta\alpha}[\partial_t e^{-(\eta t)^2L_{||}}f]\Vert_{L^p}\lesssim \Vert\nabla f\Vert_{L^p}\]
    for all \(p>1\) and \(f\in W^{1,p}(\mathbb{R}^n)\).
\end{prop}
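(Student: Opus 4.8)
The plan is to reduce the pointwise bound, via the chain rule, to a Littlewood--Paley type estimate for the heat semigroup, prove that estimate by transferring the derivative carried by \(L\) onto \(f\) while controlling the resulting kernel, and then read off the \(L^p\) bound from pointwise domination by the Hardy--Littlewood maximal operator.

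\medskip

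Since \(L\) is \(t\)-independent, \(\partial_t e^{-(\eta t)^2 L}=-2\eta^2 t\,L e^{-(\eta t)^2 L}\); writing \(s:=\eta t\) and \(r:=s^2\) this gives
\[
\eta^{-1}\partial_t e^{-(\eta t)^2 L}f(y)=-2r^{1/2}L e^{-rL}f(y),
\]
and the cone condition \((y,t)\in\Gamma_{\eta\alpha}(x)\) becomes exactly \(|y-x|<\alpha r^{1/2}\). So everything reduces to a pointwise estimate for the operator \(r^{1/2}L e^{-rL}\) at points \(y\) with \(|y-x|<\alpha r^{1/2}\).

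\medskip

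For the core estimate I would factor \(r^{1/2}L e^{-rL}=\bigl(r^{1/2}L^{1/2}e^{-rL}\bigr)\circ L^{1/2}\), and show that \(r^{1/2}L^{1/2}e^{-rL}\) has a kernel \(\Theta_r(y,z)\) with \(|\Theta_r(y,z)|\lesssim r^{-n/2}\bigl(1+|y-z|r^{-1/2}\bigr)^{-n-1}\). This comes from the subordination identity \(L^{1/2}=\pi^{-1/2}\int_0^\infty L e^{-\tau L}\,\tau^{-1/2}\,d\tau\) together with the Gaussian bounds \(|\partial_\sigma p_\sigma(y,z)|\lesssim\sigma^{-n/2-1}e^{-c|y-z|^2/\sigma}\) for the heat kernel of a real, \(t\)-independent divergence-form operator (which follow from De Giorgi--Nash--Moser and analyticity of the semigroup, without differentiating the coefficients). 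Then \(r^{1/2}L e^{-rL}f(y)=\int\Theta_r(y,z)\,(L^{1/2}f)(z)\,dz\); splitting \(\mathbb{R}^n\) into the dyadic annuli \(\{|y-z|\sim 2^k r^{1/2}\}\), on which \(|\Theta_r|\lesssim r^{-n/2}2^{-k(n+1)}\), and using that \(|y-x|<\alpha r^{1/2}\le\alpha 2^k r^{1/2}\) forces \(B(y,2^k r^{1/2})\subset B\bigl(x,(1+\alpha)2^k r^{1/2}\bigr)\), so that \(\int_{|y-z|<2^k r^{1/2}}|L^{1/2}f|\lesssim(2^k r^{1/2})^n M(L^{1/2}f)(x)\), the geometric series in \(k\) sums to give \(r^{1/2}|L e^{-rL}f(y)|\lesssim M(L^{1/2}f)(x)\); by the Kato estimate of \refprop{prop:KatoConjecture} this is exactly \(M(\nabla f)\) as far as the \(L^p\) norm is concerned. (When the heat kernel also satisfies the spatial gradient bound \(|\nabla_z p_r(y,z)|\lesssim r^{-(n+1)/2}e^{-c|y-z|^2/r}\), one may instead use \(L e^{-rL}f=e^{-rL}\Div(A\nabla f)\) and run the same dyadic argument to get the pointwise bound by \(M(\nabla f)(x)\) directly.)

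\medskip

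The \(L^p\) statement is then immediate from the maximal function bounds just obtained, since \(M\) is bounded on \(L^p(\mathbb{R}^n)\) for every \(p>1\) and \(\Vert L^{1/2}f\Vert_{L^p}\lesssim\Vert\nabla f\Vert_{L^p}\). The main obstacle is the core estimate: as the coefficients are merely bounded and measurable, the heat kernel cannot be differentiated in space for free, so the extra derivative must be routed onto \(f\) through \(L^{1/2}\) and the subordination identity (or through the conservation property \(e^{-rL}1=1\) and a De Giorgi--Nash oscillation estimate), while keeping enough off-diagonal, Davies--Gaffney type decay that the dyadic sum against the maximal function converges.
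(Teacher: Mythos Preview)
Your main route through the factorisation \(r^{1/2}Le^{-rL}=(r^{1/2}L^{1/2}e^{-rL})\circ L^{1/2}\) produces only the pointwise bound \(|\eta^{-1}\partial_t e^{-(\eta t)^2L}f(y)|\lesssim M(L^{1/2}f)(x)\), and you say so yourself. The Kato estimate in \refprop{prop:KatoConjecture} is an \(L^p\) norm equivalence, not a pointwise one, so what you actually establish is the second conclusion of the proposition, not the first. This matters: the \emph{pointwise} inequality by \(M(\nabla f)(x)\) is precisely what the paper uses downstream, in \refcor{cor:PointwiseBoundsofw_tAndrhoByM(nablaphi^s)}, in \reflemma{lemma:thetaPointwiseBoundOnGoodSetF}, \reflemma{lemma:UnifromBoundOnrholocally}, and in \reflemma{lemma:SqFctBoundsForpartial_sPhiAndw_s^1}, where one needs uniform-in-\(s\) pointwise control on the good set \(F\) and cannot pass through an \(L^p\) norm. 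So as written, the proposal leaves the first (and more important) assertion unproved.

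The argument the paper points to (see the remarks after \refcor{cor:PointwiseBoundsofw_tAndrhoByM(nablaphi^s)} and in the proof of \reflemma{lemma:nablaSemigroupBoundedByNablaf}) uses only the pointwise kernel bounds for \(\partial_t e^{-t^2L}\) itself: by analyticity of the semigroup its kernel \(K_t(y,z)\) obeys \(|K_t(y,z)|\lesssim t^{-n-1}e^{-c|y-z|^2/t^2}\), and since \(e^{-t^2L}1=1\) one has \(\int K_t(y,z)\,dz=0\). Writing \(\partial_t e^{-t^2L}f(y)=\int K_t(y,z)\bigl[f(z)-(f)_{B(x,t)}\bigr]dz\), decomposing into annuli \(|z-x|\sim 2^k t\), and combining the \((1,1)\)-Poincar\'e inequality with the telescoping estimate \(|(f)_{B(x,t)}-(f)_{B(x,2^k t)}|\lesssim 2^k t\,M(\nabla f)(x)\) gives \(M(\nabla f)(x)\) directly, with no detour through \(L^{1/2}\). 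Your parenthetical alternative via \(|\nabla_z p_r(y,z)|\lesssim r^{-(n+1)/2}e^{-c|y-z|^2/r}\) is not available here: for merely bounded measurable real coefficients De Giorgi--Nash--Moser yields H\"older continuity of \(p_r(y,\cdot)\), not a pointwise gradient bound. Your closing aside about the conservation property is the right idea, but it is only a hint; the missing ingredient is the Poincar\'e/telescoping step that turns the cancellation into \(M(\nabla f)\).
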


For us relevant is the adaptation to our case with the family \(L^s\) instead of the \(t\)-independent \(L_{||}\).

\begin{cor}\label{cor:PointwiseBoundsofw_tAndrhoByM(nablaphi^s)}
    Let \(\eta>0,\alpha>0\). Then we have for all \((y,t)\in\Gamma_{\eta\alpha}(x)\)
    \begin{align} \eta^{-1}w_t(y,s)=sL^se^{-(\eta s)^2L^s}\phi^s\lesssim M(\nabla \phi^s)(x),\label{eq:w_tpointwiseBounded}\end{align}
    and
    \begin{align}\frac{\rho_\eta(y,s)}{\eta s}=\frac{e^{-(\eta s)^2L^s}(\phi^s-m)(y)}{s}\lesssim M(\nabla \phi^s)(x),\label{eq:rhopointwiseBounded}\end{align}
    where
    \[m(s):=\fint_{\Delta(x, 2\eta\alpha s)}\phi^s(x) dx=(\phi^s(\cdot))_{\Delta(x, 2\eta\alpha s)}.\]
    Furthermore, we also have
    \begin{align}
        \eta^{-2}s^2L^sw_t(y,s)=\eta^{-2}s^3L^se^{-(\eta s)^2L^s}\phi^s\lesssim M(\nabla \phi^s)(x).\label{eq:L^sw_tpointwiseBounded}
    \end{align}
\end{cor}

\begin{proof}
The proof of this corollary is completely analogous to the proof of \refprop{Proposition11} or Proposition 11 in \cite{hofmann_dirichlet_2022}. We still would like to mention all of the minor observations that lead to that conclusion.
\smallskip

    The first inequality \eqref{eq:w_tpointwiseBounded} follows directly from \refprop{Proposition11}. For \eqref{eq:rhopointwiseBounded} we can observe that the only property of \(\partial_t e^{-t^2L_{||}}\) that was used in the proof of \refprop{Proposition11} was the kernel bound \eqref{eq:kernelbounds} of \(\partial_t e^{-t^2L_{||}}\). However, the same proposition gives the same kernel bound for \(\frac{e^{-t^2L_{||}}}{t}\) and \(tL_{||}\partial_t e^{-t^2L_{||}}\) and hence we get with a completely analogous proof to that of \eqref{eq:w_tpointwiseBounded} also \eqref{eq:rhopointwiseBounded} and \eqref{eq:L^sw_tpointwiseBounded}. Since the operator \(\partial_t e^{-t^2L_{||}}\) kills constants but \(\frac{e^{-t^2L_{||}}}{t}\) does not, we need to subtract the mean value in the case of \eqref{eq:rhopointwiseBounded}.
\end{proof}

We will also apply the following basic lemma which can be proved with the above introduced statements and ideas.

\begin{lemma}\label{lemma:nablaSemigroupBoundedByNablaf}
For \(t\)-independent \(L_{||}\), if \(f\in W_0^{1,2}(\mathbb{R}^n)\) then 
\[\Vert \nabla e^{-tL_{||}}f\Vert_{L^2(\mathbb{R}^n)}\leq C \Vert \nabla f\Vert_{L^2(\mathbb{R}^n)},\]
and
\[\Vert \nabla \partial_t e^{-t^2L_{||}}f\Vert_{L^2(\mathbb{R}^n)}\leq C t^{-1}\Vert \nabla f\Vert_{L^2(\mathbb{R}^n)}\]
\end{lemma}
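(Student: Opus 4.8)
The plan is to derive both estimates from the $L^2$ bounds already collected in this section, using the fact that $f\in W_0^{1,2}$ puts us in a setting where derivatives can be moved freely past the semigroup. For the first inequality, I would start from the identity
\[
e^{-tL}f = e^{-(t/2)L}\,e^{-(t/2)L}f,
\]
so that
\[
\nabla e^{-tL}f = \nabla e^{-(t/2)L}\bigl(e^{-(t/2)L}f\bigr).
\]
The second factor of Proposition \ref{prop:L2NormBoundsOfHeatSemigroup} gives $\Vert \nabla e^{-(t/2)L}g\Vert_{L^2}\lesssim t^{-1/2}\Vert g\Vert_{L^2}$; applying this with $g=e^{-(t/2)L}f$ and then using analyticity of the semigroup plus the Kato estimate to control $\Vert e^{-(t/2)L}f\Vert_{L^2}$ by $t^{1/2}\Vert \nabla f\Vert_{L^2}$ would close the argument. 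Concretely, $\Vert e^{-(t/2)L}f\Vert_{L^2}=\Vert e^{-(t/2)L}L^{-1/2}L^{1/2}f\Vert_{L^2}$, and since $L^{1/2}f\in L^2$ with $\Vert L^{1/2}f\Vert_{L^2}\lesssim \Vert\nabla f\Vert_{L^2}$ by Proposition \ref{prop:KatoConjecture} (or Corollary \ref{cor:NablaL^-1/2Bounded}), one needs $\Vert e^{-(t/2)L}L^{-1/2}h\Vert_{L^2}\lesssim t^{1/2}\Vert h\Vert_{L^2}$, which follows from the $l=0$ case of Proposition \ref{prop:L2NormBoundsOfHeatSemigroup} combined with $\Vert \nabla L^{-1/2}h\Vert_{L^2}\approx\Vert h\Vert_{L^2}$ and the gradient bound again. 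Multiplying the three factors yields the constant independent of $t$.

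For the second inequality I would argue similarly but with one more derivative in the time variable. Write, for $t>0$,
\[
\nabla \partial_t e^{-t^2L}f = 2t\,\nabla\bigl(L\,e^{-t^2L}f\bigr) = 2t\,\nabla e^{-(t^2/2)L}\bigl(L e^{-(t^2/2)L}f\bigr).
\]
By the gradient bound this is controlled by $t\cdot (t^2)^{-1/2}\Vert L e^{-(t^2/2)L}f\Vert_{L^2}$, and $\Vert L e^{-(t^2/2)L}f\Vert_{L^2}=\Vert L^{1/2}e^{-(t^2/2)L}L^{1/2}f\Vert_{L^2}\lesssim (t^2)^{-1/2}\Vert L^{1/2}f\Vert_{L^2}\lesssim t^{-1}\Vert\nabla f\Vert_{L^2}$, where the first inequality is the analyticity bound $\Vert L^{1/2}e^{-\tau L}g\Vert_{L^2}\lesssim \tau^{-1/2}\Vert g\Vert_{L^2}$ (which is the first bullet of Proposition \ref{prop:L2NormBoundsOfHeatSemigroup} with the half-derivative, or follows from the $l=1$ case together with the gradient bound) and the second is Kato. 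Collecting powers of $t$ gives $t\cdot t^{-1}\cdot t^{-1}=t^{-1}$, as claimed.

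The only genuine subtlety is justifying the fractional-power manipulations — i.e. that $L^{1/2}$ and $L^{-1/2}$ commute appropriately with $e^{-\tau L}$ on $L^2$ and that the analyticity-type estimate $\Vert L^{1/2}e^{-\tau L}\Vert_{L^2\to L^2}\lesssim \tau^{-1/2}$ holds; this is standard for maximal-accretive operators and can also be read off from the kernel bounds in Proposition 4.3 of \cite{hofmann_lp_2022} exactly as invoked in the proof of Corollary \ref{cor:PointwiseBoundsofw_tAndrhoByM(nablaphi^s)}. The role of the hypothesis $f\in W_0^{1,2}$ (rather than merely $W^{1,2}$) is to guarantee $L^{1/2}f\in L^2$ with the Kato bound and that the boundary terms in any integration by parts vanish, so no extra care is needed there. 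I expect the write-up to be short: the estimates are a three-line composition of facts already proved in this section, and the "basic" label in the statement is accurate.
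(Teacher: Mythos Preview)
Your argument for the \emph{first} inequality contains a genuine error. You claim the bound
\[
\Vert e^{-(t/2)L}L^{-1/2}h\Vert_{L^2}\lesssim t^{1/2}\Vert h\Vert_{L^2},
\]
but this is false: letting $t\to 0$ would force $L^{-1/2}h=0$ for every $h\in L^2$. Equivalently, the estimate $\Vert e^{-\tau L}f\Vert_{L^2}\lesssim \tau^{1/2}\Vert\nabla f\Vert_{L^2}$ you are aiming for cannot hold uniformly in $\tau$ (again let $\tau\to 0$). The ingredients you list---contractivity, the Kato equivalence $\Vert\nabla L^{-1/2}h\Vert\approx\Vert h\Vert$, and the gradient bound $\Vert\nabla e^{-\tau L}\Vert\lesssim\tau^{-1/2}$---do not combine to yield it. The splitting $e^{-tL}=e^{-(t/2)L}e^{-(t/2)L}$ is in fact a detour here; the paper's proof is the direct four-step chain
\[
\Vert\nabla e^{-tL}f\Vert_{L^2}\lesssim \Vert L^{1/2}e^{-tL}f\Vert_{L^2}=\Vert e^{-tL}L^{1/2}f\Vert_{L^2}\lesssim \Vert L^{1/2}f\Vert_{L^2}\lesssim \Vert\nabla f\Vert_{L^2},
\]
using Kato on both ends, commutation of $L^{1/2}$ with the semigroup, and contractivity.

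For the \emph{second} inequality your approach is correct and differs from the paper's. You go purely operator-theoretically: write $\nabla\partial_t e^{-t^2L}f=2t\,\nabla e^{-(t^2/2)L}\bigl(L e^{-(t^2/2)L}f\bigr)$, apply the gradient bound to pick up $t^{-1}$, then use $\Vert L^{1/2}e^{-\tau L}\Vert_{L^2\to L^2}\lesssim\tau^{-1/2}$ together with Kato to get another $t^{-1}$. The paper instead uses ellipticity and integration by parts,
\[
\Vert\nabla\partial_t e^{-t^2L}f\Vert_{L^2}^2\lesssim\int A\nabla\partial_t e^{-t^2L}f\cdot\nabla\partial_t e^{-t^2L}f\,dx
\lesssim\frac{1}{t}\Vert\partial_{tt}e^{-t^2L}f\Vert_{L^2}\Vert\partial_t e^{-t^2L}f\Vert_{L^2},
\]
and then invokes kernel bounds (as in Proposition~\ref{Proposition11}) for the two factors on the right. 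Your route is slightly cleaner in that it stays within the $L^2$ functional calculus and avoids appealing to pointwise kernel estimates; the paper's route has the advantage that it only uses tools already fully stated in the section.
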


\begin{proof}
    The first inequality is an easy corollary from the Kato conjecture, i.e. we have with \refprop{prop:KatoConjecture} and \refprop{prop:L2NormBoundsOfHeatSemigroup}
    \begin{align*}
        \Vert \nabla e^{-tL_{||}}f\Vert_{L^2(\mathbb{R}^n)}&\lesssim \Vert L_{||}^{1/2} e^{-tL_{||}}f\Vert_{L^2(\mathbb{R}^n)}\lesssim \Vert  e^{-tL_{||}}L_{||}^{1/2} f\Vert_{L^2(\mathbb{R}^n)}
        \\
        &\lesssim \Vert L_{||}^{1/2} f\Vert_{L^2(\mathbb{R}^n)}\lesssim \Vert  \nabla f\Vert_{L^2(\mathbb{R}^n)}.
    \end{align*}
    For the second one we have
    \begin{align*}
        \Vert\nabla \partial_t e^{-t^2L_{||}}f\Vert_{L^2(\mathbb{R}^n)}^2 &\lesssim \int_{\mathbb{R}^n} A\nabla \partial_t e^{-t^2L_{||}}f \cdot \nabla \partial_t e^{-t^2L_{||}}f dx
        \\
        &\lesssim \int_{\mathbb{R}^n} \frac{\partial_{tt} e^{-t^2L_{||}}f}{2t}\partial_t e^{-t^2L_{||}}f dx
        \\
        &\lesssim \frac{1}{t}\Vert\partial_{tt} e^{-t^2L_{||}}f\Vert_{L^2(\mathbb{R}^n)} \Vert\partial_t e^{-t^2L_{||}}f\Vert_{L^2(\mathbb{R}^n)}
    \end{align*}
    Again, we can observe in the proof of in \refprop{Proposition11} (or Proposition 11 in \cite{hofmann_dirichlet_2022}) that the only properties of \(\partial_t e^{-t^2L_{||}}\) that were used are bounds of its kernel \eqref{eq:kernelbounds}. However, the same proposition gives the same bounds for \(t\partial_{tt}e^{-t^2L_{||}}\) and hence we get 
    \[\Vert\partial_{tt} e^{-t^2L_{||}}f\Vert_{L^2(\mathbb{R}^n)}\lesssim \frac{1}{t}\Vert\nabla f\Vert_{L^2(\mathbb{R}^n)}\qquad\textrm{and}\qquad \Vert\partial_{t} e^{-t^2L_{||}}f\Vert_{L^2(\mathbb{R}^n)}\lesssim \Vert\nabla f\Vert_{L^2(\mathbb{R}^n)}.\]
    So in total we have
    \[ \Vert\nabla \partial_t e^{-t^2L_{||}}f\Vert_{L^2(\mathbb{R}^n)}^2\lesssim \frac{1}{t^2}\Vert\nabla f\Vert_{L^2(\mathbb{R}^n)}^2.\]
\end{proof}

Furthermore, we have off-diagonal estimates for operators involving the heat semigroup. We quote the following \(L^2-L^2\) case (see Prop. 3.1 in \cite{auscher_necessary_2007}).
\begin{prop}\label{prop:off-diagonal}
    We say an operator family \(T=(T_t)_{t>0}\) satisfies \(L^2-L^2\) off-diagonal estimates, if there exists \(C,\alpha>0\) such that for all closed sets \(E\) and \(F\) and all \(t>0\)
    \[\Vert T_t(h)\Vert_{L^2(F)}\leq Ce^{-\alpha\frac{d(E,F)^2}{t}}\Vert h\Vert_{L^2(E)},\]
    where \(\supp(h)=E\) and \(d(E,F)\) is the semi-distance induced on sets by Euclidean distance.

    \medskip
    Then the families \((e^{-tL})_{t>0}, (t\partial_te^{-tL})_{t>0}\), and \((\sqrt{t}\nabla e^{-tL})_{t>0}\) satisfy \(L^2-L^2\) off-diagonal estimates.
\end{prop}

\section{The good set \(F\) and local estimates on \(F\)}\label{section:GoodSetF}

In this section we want to construct the good set \(F\). The main idea from here is that we want to bound \(M[\nabla\phi^s]\) pointwise on the good set \(F\). In contrast to \cite{hofmann_dirichlet_2022}, we now have to bound the family of functions \((M[\nabla\phi^s])_s\) pointwise and uniformly in \(s\) on the same good set \(F\). This can only be expected if there is some condition on the behavior of \(\partial_s\AP\) and this step simplifies in the case of the upper half plane (see proof of \refthm{thm:MainTheoremn=1}). For all dimensions \(n\in \mathbb{N}\) we have the following lemma.

\begin{lemma}\label{lemma:UniformBoundOnM[nablaphi^s]}
Assume condition \eqref{cond:L1-Linfty}. Then for small \(\gamma>0\) there exists a large \(\kappa_0>0\) and \(F\subset 3\Delta\) such that
\[M[\nabla\phi^s](x)\leq M[|\nabla\phi^s|^2]^{1/2}(x)\leq C(n,\lambda_0,\Lambda_0)\kappa_0\qquad \textrm{for all }x\in F, \textrm{ and }s>0,\]
where \(|3\Delta\setminus F|\leq \gamma |\Delta|\).
\end{lemma}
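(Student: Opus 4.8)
The plan is to construct the good set $F$ via a Calderón–Zygmund/weak-$(1,1)$ argument applied to a suitable maximal function, exploiting the uniform-in-$s$ bound \eqref{equation:L2ofnablaphiBounded} on $\fint_{3\Delta}|\nabla\phi^s|^2$ together with condition \eqref{condition:L^1Carlesontypecond} to control the $s$-dependence. The key point is that $\fint_{3\Delta}|\nabla\phi^s|^2\le C$ holds \emph{uniformly} in $s$, but naively the exceptional set where $M[|\nabla\phi^s|^2]^{1/2}>\kappa_0$ could depend on $s$, and we must take a set $F$ that works for all $s$ simultaneously.

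First I would observe that by \eqref{equation:L2ofpartial_snablaphiBounded} we have $\Vert\nabla\partial_s\phi^s\Vert_{L^2(3\Delta)}\lesssim \Vert\partial_s\AP(\cdot,s)\Vert_{L^\infty(3\Delta)}|\Delta|^{1/2}$, so by the fundamental theorem of calculus in $s$,
\[
\sup_{s>0}|\nabla\phi^s(x)|\le |\nabla\phi^{s_0}(x)|+\int_0^\infty|\nabla\partial_s\phi^s(x)|\,ds
\]
for any fixed reference $s_0$ (one checks $\nabla\phi^s$ stabilizes as $s\to\infty$ since $\partial_s A$ eventually... actually one uses that $A(\cdot,s)$ converges, or simply integrates over a finite range and handles large $s$ by the $L^2$ bound directly). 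Taking $L^2(3\Delta)$ norms and using Minkowski's integral inequality,
\[
\Big\Vert \sup_{s>0}|\nabla\phi^s|\,\Big\Vert_{L^2(3\Delta)}\le \Vert\nabla\phi^{s_0}\Vert_{L^2(3\Delta)}+\int_0^\infty\Vert\nabla\partial_s\phi^s\Vert_{L^2(3\Delta)}\,ds\lesssim |\Delta|^{1/2}\Big(1+\int_0^\infty\Vert\partial_s A(\cdot,s)\Vert_\infty\,ds\Big)\lesssim |\Delta|^{1/2}.
\]
Thus $g(x):=\sup_{s>0}|\nabla\phi^s(x)|^2$ satisfies $\fint_{3\Delta}g\le C(n,\lambda_0,\Lambda_0)$.

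Then I would set $F:=\{x\in 3\Delta: M[g](x)\le C_1\kappa_0^2\}$ — where $M$ is restricted to $3\Delta$ or taken of $g\chi_{3\Delta}$ — and use the weak-$(1,1)$ bound for the Hardy–Littlewood maximal function: $|3\Delta\setminus F|=|\{M[g\chi_{3\Delta}]>C_1\kappa_0^2\}|\le \frac{C_n}{C_1\kappa_0^2}\Vert g\Vert_{L^1(3\Delta)}\le \frac{C_n C}{C_1\kappa_0^2}|3\Delta|\le \frac{C'}{\kappa_0^2}|\Delta|$. Given $\gamma>0$, choosing $\kappa_0$ large enough that $C'/\kappa_0^2\le\gamma$ gives $|3\Delta\setminus F|\le\gamma|\Delta|$. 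On $F$, for every $s>0$ and every ball $B\ni x$ we have $\fint_B|\nabla\phi^s|^2\le\fint_B g\le M[g](x)\le C_1\kappa_0^2$, hence $M[|\nabla\phi^s|^2]^{1/2}(x)\le \sqrt{C_1}\,\kappa_0$, and $M[\nabla\phi^s](x)\le M[|\nabla\phi^s|^2]^{1/2}(x)$ by Cauchy–Schwarz (Jensen). Absorbing $\sqrt{C_1}$ into the constant (or into $\kappa_0$) finishes the proof.

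The main obstacle I anticipate is making the "$\sup_s$ then integrate" step rigorous: one must justify that $s\mapsto\nabla\phi^s(x)$ is (for a.e.\ $x$) absolutely continuous with derivative $\nabla\partial_s\phi^s$, and handle the behavior as $s\to 0$ and $s\to\infty$ — near $s=0$ the factor $s$ in the $L^2$–$L^\infty$ condition \eqref{condition:L^2Carlesontypecond} is doing the work, but here we genuinely need the $L^1$ condition \eqref{condition:L^1Carlesontypecond} (not just \eqref{condition:L^2Carlesontypecond}), which is presumably why the lemma is stated under \eqref{condition:L^1Carlesontypecond}. The passage through the pointwise supremum can be avoided by instead estimating $\Vert M[|\nabla\phi^s|^2]^{1/2}\Vert_{L^2}$ uniformly in $s$ and then... no — that still gives an $s$-dependent exceptional set; the pointwise $\sup_s$ really is needed, so the absolute-continuity-in-$s$ argument is the crux. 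This should be routine given \eqref{equation:L2ofpartial_snablaphiBounded} but requires care with the integrability at the endpoints.
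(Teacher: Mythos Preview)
Your proposal is correct and follows essentially the same strategy as the paper: dominate $|\nabla\phi^s|^2$ by a single $s$-independent function in $L^1(3\Delta)$ via the fundamental theorem of calculus in $s$, then apply the weak-$(1,1)$ bound for the maximal function. The only cosmetic difference is that the paper works with the squared quantity directly, setting $\Phi(x):=|\nabla\phi^1(x)|^2+2\int_0^\infty|\partial_t\nabla\phi^t(x)||\nabla\phi^t(x)|\,dt$ and bounding $\int_{3\Delta}\Phi$ by Cauchy--Schwarz in $x$, whereas you take the pointwise supremum $g(x)=\sup_s|\nabla\phi^s(x)|^2$ and bound $\Vert g\Vert_{L^1}^{1/2}$ via Minkowski in $L^2$; the two arguments are equivalent and the paper likewise glosses over the absolute-continuity-in-$s$ issue you flagged.
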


\begin{proof}
First, by Fundamental Theorem of Calculus we observe
\[|\nabla\phi^s(x)|^2=|\nabla\phi^1(x)|^2+\int_1^s\partial_t\big(|\nabla\phi^t(x)|^2\big)dt.\]
Hence, we can set \(\Phi(x):=|\nabla\phi^1(x)|^2+2\int_0^\infty|\partial_t\nabla\phi^t(x)||\nabla\phi^t|dt\) and note that
\begin{align*}
    M[|\nabla \phi^s|^2](x)&\leq M\Big[|\nabla\phi^1|^2+\int_1^s|\partial_t|\nabla\phi^t|^2|dt\Big](x)
    \\
    &=M\Big[|\nabla\phi^1|^2+2\int_1^s|\partial_t\nabla\phi^t||\nabla\phi^t(x)|dt\Big](x)\leq M[\Phi](x).
\end{align*}
Since the maximal function is bounded from \(L^1\) to \(L^{1,\infty}\), we have for \(\kappa_0>0\)
\begin{align*}
    &|\{x\in 3\Delta; M[\Phi](x)>\kappa_0^2\}|\lesssim \frac{1}{\kappa_0^2}\int_{3\Delta}|\Phi(x)| dx 
    \\
    &\qquad\leq \frac{1}{\kappa_0^2}\Big(\int_{3\Delta}|\nabla\phi^1(x)|^2dx+2\int_0^\infty\int_{3\Delta}|\partial_t\nabla\phi^t(x)||\nabla\phi^t(x)|dx dt\Big)
    \\
    &\qquad \leq \frac{1}{\kappa_0^2}\Big(|\Delta|+2\int_0^\infty\Vert\partial_t\nabla\phi^t\Vert_{L^2(3\Delta)}\Vert\nabla\phi^t\Vert_{L^2(3\Delta)}dt\Big)
\end{align*}

With \eqref{equation:L2ofnablaphiBounded} and \eqref{equation:L2ofpartial_snablaphiBounded} we can bound this expression by
\begin{align*} 
    \frac{1}{\kappa_0^2}\Big(1+2\int_0^\infty\Vert\partial_t A(\cdot,t)\Vert_{L^\infty(3\Delta)} dt\Big)|\Delta|.
\end{align*}

Due to the \(L^1-L^\infty\) condition \eqref{cond:L1-Linfty} we can choose \(\kappa_0=\kappa_0(\gamma)\) sufficiently large to ensure
\[|\{x\in 3\Delta; M[\Phi](x)>\kappa_0\}|\lesssim \gamma |\Delta|.\]
Setting \(F:=\{x\in 3\Delta; M[\Phi](x)\leq\kappa_0\}\) and observing \(M[f]^2\leq M[|f|^2]^{1/2}\) finishes the proof.
\end{proof}

This lemma leads to the a pointwise bound on \(\theta\) which will be used in the proof of \refthm{thm:MainTheorem}.

\begin{lemma}\label{lemma:thetaPointwiseBoundOnGoodSetF}
Assume \eqref{cond:L1-Linfty}. Then for small \(\gamma>0\) there exists a large \(\kappa_0>0\) and \(F\subset 3\Delta\) such that
\[\theta_\eta(x,s)\leq C(n,\lambda_0,\Lambda_0)\kappa_0\eta s\qquad \textrm{for all }x\in F, s>0\]
where \(|3\Delta\setminus F|\leq \gamma |\Delta|\).
\end{lemma}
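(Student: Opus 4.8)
The plan is to estimate $\theta_\eta(x,s) = \phi^s(x) - \rho_\eta(x,s) = \phi^s(x) - e^{-\eta^2 s^2 L^s}\phi^s(x)$ pointwise on the good set $F$ from Lemma \ref{lemma:UniformBoundOnM[nablaphi^s]}, using the fundamental theorem of calculus in the heat-semigroup variable. First I would write
\[
\theta_\eta(x,s) = \phi^s(x) - e^{-\eta^2 s^2 L^s}\phi^s(x) = -\int_0^{\eta^2 s^2} \partial_t e^{-tL^s}\phi^s(x)\, dt = -\int_0^{\eta s} 2\tau\, \partial_t e^{-t L^s}\phi^s(x)\big|_{t=\tau^2}\, d\tau,
\]
after the substitution $t = \tau^2$. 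The point of this reparametrisation is that $\eta^{-1}\partial_t e^{-(\eta\tau)^2 L^s}\phi^s(y)$ is exactly the quantity controlled pointwise by $M(\nabla\phi^s)$ in Corollary \ref{cor:PointwiseBoundsofw_tAndrhoByM(nablaphi^s)}, via the substitution $\tau \mapsto \eta\tau$ in the integral.

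**Key steps.**

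Take $F \subset 3\Delta$ and $\kappa_0 = \kappa_0(\gamma)$ as produced by Lemma \ref{lemma:UniformBoundOnM[nablaphi^s]}, so that $M[\nabla\phi^s](x) \le C(n,\lambda_0,\Lambda_0)\kappa_0$ for all $x \in F$ and all $s > 0$, with $|3\Delta \setminus F| \le \gamma|\Delta|$. Fix $x \in F$ and $s > 0$. Since $(x, 0) \in \Gamma_{\eta\alpha}(x)$ (indeed the vertical ray through $x$ lies in every cone with vertex $x$), Corollary \ref{cor:PointwiseBoundsofw_tAndrhoByM(nablaphi^s)}, equation \eqref{eq:w_tpointwiseBounded}, gives
\[
\eta^{-1}\big|\partial_t e^{-(\eta\tau)^2 L^s}\phi^s(x)\big| \lesssim M(\nabla\phi^s)(x) \le C\kappa_0 \qquad \text{for all } \tau > 0.
\]
Substituting $t = (\eta\tau)^2$ in the integral representation of $\theta_\eta$ above (so $dt = 2\eta^2\tau\, d\tau$ and $t$ runs over $(0,\eta^2 s^2)$ as $\tau$ runs over $(0, s)$) yields
\[
|\theta_\eta(x,s)| \le \int_0^{s} 2\eta^2 \tau\, \big|\partial_t e^{-(\eta\tau)^2 L^s}\phi^s(x)\big|\, d\tau \lesssim \int_0^{s} 2\eta^2 \tau\cdot \eta C\kappa_0\, d\tau \cdot \eta^{-1} = C\kappa_0 \eta \int_0^{s} 2\tau\, d\tau \cdot \eta \cdot \eta^{-1};
\]
keeping track of the powers of $\eta$ carefully, one factor $\eta$ comes from $\eta^{-1}\partial_t e^{-(\eta\tau)^2L^s} \lesssim C\kappa_0$ being multiplied back by $\eta$, and the Jacobian contributes $\eta^2\tau$, so the integrand is $\lesssim \eta^2\tau\cdot \eta C\kappa_0$, giving $|\theta_\eta(x,s)| \lesssim \eta^3 C\kappa_0 \int_0^s \tau\, d\tau = \eta^3 C\kappa_0 s^2/2$. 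This is not quite the claimed bound $C\kappa_0\eta s$; the correct bookkeeping should instead keep the integral in the original variable $t$ over $(0,\eta^2 s^2)$, where $|\partial_t e^{-tL^s}\phi^s(x)|$ is integrable in $t$ with a bound scaling like $\eta^{-1}s^{-1}\cdot(\text{something})$, and one arrives at $|\theta_\eta(x,s)| \lesssim \eta^2 s^2 \cdot (\eta s)^{-1} C\kappa_0 = C\kappa_0 \eta s$. I would do this more carefully by writing, for $t \in (0,\eta^2 s^2)$, $\partial_t e^{-tL^s}\phi^s = \partial_\tau e^{-\tau^2 L^s}\phi^s\cdot \frac{1}{2\sqrt{t}}$ with $\tau = \sqrt{t} \le \eta s$, so that $|\partial_t e^{-tL^s}\phi^s(x)| \le \frac{1}{2\sqrt{t}}\cdot \eta\cdot C\kappa_0$ (applying \eqref{eq:w_tpointwiseBounded} with the dilation absorbed), and then $\int_0^{\eta^2 s^2} \frac{dt}{2\sqrt{t}} = \eta s$, which gives exactly $|\theta_\eta(x,s)| \le C\kappa_0 \eta \cdot \eta s = C\kappa_0 \eta^2 s$ — still off by a power of $\eta$, suggesting the intended normalisation in \eqref{eq:w_tpointwiseBounded} already carries the right scaling and one should simply apply $w_t(y,s) = \partial_t w(x,t,s)|_{t=\eta^2s^2} = 2\eta^2 s L^s w$ as identified in the text, i.e. directly integrate $\partial_s$-type identities. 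The cleanest route is: $\theta_\eta(x,s) = -\int_0^{\eta s} \frac{d}{d\tau}\big[e^{-\tau^2 L^s}\phi^s(x)\big] d\tau$, and $\frac{d}{d\tau} e^{-\tau^2 L^s}\phi^s = 2\tau\, \partial_t e^{-tL^s}\phi^s|_{t=\tau^2}$; by Proposition \ref{Proposition11} applied with aperture parameter chosen so $\tau$ plays the role of $\eta t$ there, $\big|\frac{d}{d\tau}e^{-\tau^2L^s}\phi^s(x)\big| \lesssim M(\nabla\phi^s)(x) \le C\kappa_0$ uniformly in $\tau$, hence $|\theta_\eta(x,s)| \le \int_0^{\eta s} C\kappa_0\, d\tau = C\kappa_0 \eta s$, as desired.

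**Main obstacle.**

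The delicate point is the scaling bookkeeping in the change of variables and the precise form in which Proposition \ref{Proposition11} / Corollary \ref{cor:PointwiseBoundsofw_tAndrhoByM(nablaphi^s)} is invoked: one must verify that $\big|\frac{d}{d\tau} e^{-\tau^2 L^s}\phi^s(y)\big| \lesssim M(\nabla\phi^s)(x)$ holds for $(y,\tau/\eta')$ in an appropriate cone, with constants independent of $s$ (which is legitimate since $L^s$ is $t$-independent as an operator in $x$ with uniform ellipticity constants $\lambda_0,\Lambda_0$, and Proposition \ref{Proposition11} applies to each fixed $s$ with uniform constants). A secondary subtlety is that \eqref{eq:w_tpointwiseBounded} is stated with the factor $\eta^{-1}$ on $w_t = \partial_t w|_{t=\eta^2 s^2}$, so one should relate the full family $\{\partial_t e^{-\tau^2 L^s}\phi^s\}_{\tau \in (0,\eta s)}$ to this by noting that the pointwise kernel bounds underlying Proposition \ref{Proposition11} hold at every scale, not just at $\tau = \eta s$ — exactly the observation already used in the proof of Corollary \ref{cor:PointwiseBoundsofw_tAndrhoByM(nablaphi^s)}. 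Once that uniform-in-$\tau$ pointwise bound is in hand, the integration over $\tau \in (0,\eta s)$ is immediate and yields the linear-in-$\eta s$ bound. The set $F$ and the measure estimate $|3\Delta\setminus F| \le \gamma|\Delta|$ are simply inherited verbatim from Lemma \ref{lemma:UniformBoundOnM[nablaphi^s]}.
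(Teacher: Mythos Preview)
Your final ``cleanest route'' is exactly the paper's argument: write $\theta_\eta(x,s)$ as the integral of $\frac{d}{d\tau}e^{-\tau^2 L^s}\phi^s(x)$ over $\tau\in(0,\eta s)$, bound the integrand pointwise by $M(\nabla\phi^s)(x)\le C\kappa_0$ via Proposition~\ref{Proposition11}/Corollary~\ref{cor:PointwiseBoundsofw_tAndrhoByM(nablaphi^s)} (applied at $y=x$, which trivially lies in its own cone), and integrate to get $C\kappa_0\eta s$, with $F$ inherited from Lemma~\ref{lemma:UniformBoundOnM[nablaphi^s]}. The intermediate $\eta$-bookkeeping detours in your write-up are unnecessary and should be cut --- the paper simply parametrises by $t\in(0,s)$ and invokes $|\partial_t e^{-(\eta t)^2 L^s}\phi^s(x)|\lesssim \eta\, M(\nabla\phi^s)(x)$ straight from Proposition~\ref{Proposition11}, which is the same integral after the linear change $\tau=\eta t$ and avoids the scaling confusion entirely.
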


\begin{proof}
We take \(F\) from \reflemma{lemma:UniformBoundOnM[nablaphi^s]}. By
\refcor{cor:PointwiseBoundsofw_tAndrhoByM(nablaphi^s)} we have \(\partial_t w(y,(\eta t)^2,s)\leq C(n,\lambda_0,\Lambda_0) \eta M[\nabla\phi^s](y)\) for all \(y\in \mathbb{R}^n\). Hence we get from \reflemma{lemma:UniformBoundOnM[nablaphi^s]} for \(x\in F\)
\[\theta_\eta(x,s)=\int_0^s \partial_t w(x,(\eta t)^2;s)dt\lesssim \int_0^s \eta\kappa_0 dt\leq \kappa_0\eta s.\]
\end{proof}

The next lemma is an extension of \reflemma{lemma:thetaPointwiseBoundOnGoodSetF}.

\begin{lemma}\label{lemma:thetaPointwiseBoundOnOmega(F)}
Assume \eqref{cond:L1-Linfty} and let \(F\) be the good set from \reflemma{lemma:thetaPointwiseBoundOnGoodSetF} (or \reflemma{lemma:UniformBoundOnM[nablaphi^s]}). Then
\[\theta_\eta(x,s)\leq C(n,\lambda_0,\Lambda_0)\kappa_0\eta s\qquad \textrm{for all }(x,s)\in \tilde{\Omega}(F).\]
\end{lemma}

\begin{proof}
    Completely analogous to Lemma 8 in \cite{hofmann_dirichlet_2022}.
\end{proof}

The next lemma demonstrates that also the spatial gradient of \(\rho\) behaves well on the sawtooth region over this good set \(F\).

\begin{lemma}\label{lemma:UnifromBoundOnrholocally}
Let \(\alpha>0,\eta>0\) and \(F\) like in \reflemma{lemma:UniformBoundOnM[nablaphi^s]}. then we have for all \((x,s)\in \tilde{\Omega}_{\eta\alpha}(F)\) a \(P\in F\) such that
\[\fint_{\Delta(x,{\eta\alpha} s)}|\nabla\rho_\eta(y,s)|^2 dy \lesssim_{\alpha,\eta} M[\nabla\phi^s](P)\lesssim\kappa_0.\]
\end{lemma}

\begin{proof}
Fix \(0<\EPS< \frac{1}{100}\) small and let \(\psi\in C^\infty\) be a smooth cut-off function with \(\mathrm{supp}(\psi)\subset (1+\EPS)\Delta(x,{\eta\alpha} s)\) and \(\psi\equiv 1\) on \(\Delta(x,{\eta\alpha} s)\), while \(|\nabla\psi|\lesssim \frac{1}{{\EPS\eta\alpha} s}\). Let \[m(s):=\fint_{\Delta(x, (1+\EPS)\eta\alpha s)}\phi^s(x) dx=(\phi^s(\cdot))_{B_x(x, 2\eta\alpha s)}.\]
Then we get
\begin{align*}
    \fint_{\Delta(x,{\eta\alpha} s)}|\nabla\rho_\eta|^2 dy&\leq \fint_{(1+\EPS)\Delta(x,{\eta\alpha} s)}A\nabla\rho_\eta\cdot \nabla\rho_\eta \psi^2 dy
    \\
    &=\fint_{(1+\EPS)\Delta(x,{\eta\alpha} s)}A\nabla\rho_\eta\cdot \nabla((\rho_\eta-m) \psi^2) dy
    \\
    &\qquad + \fint_{(1+\EPS)\Delta(x,{\eta\alpha} s)}A\nabla\rho_\eta\cdot (\rho_\eta-m) \psi\nabla\psi dy \eqcolon I+J.
\end{align*}
Since \(s L^s \rho_\eta(x,s)=w_t(x,s)\) we have by \refcor{cor:PointwiseBoundsofw_tAndrhoByM(nablaphi^s)}
\[I=\fint_{(1+\EPS)\Delta(x,s)}w_t(y,s) \frac{(\rho_\eta(y,s)-m(s))\psi^2}{\eta\alpha s} dy\lesssim M[\nabla\phi^s](P)^2\]
for every \((P,s)\in \Gamma_{\eta\alpha}(x)\). Choosing \(P\in F\) leads with \reflemma{lemma:UniformBoundOnM[nablaphi^s]} to \(I\lesssim \kappa_0^2\).

\medskip

For \(J\) we can calculate for \(0<\sigma<1\)
\begin{align*}
J&\lesssim \Big(\fint_{(1+\EPS)\Delta(x,\eta\alpha s)}|\nabla\rho_\eta|^2\psi^2 dy\Big)^{1/2}\Big(\fint_{(1+\EPS)\Delta(x,\alpha\eta s)}\frac{|\rho_\eta-m|^2}{s^2} dy\Big)^{1/2}
\\
&\lesssim \sigma \fint_{(1+\EPS)\Delta(x,\alpha\eta s)}|\nabla\rho_\eta|^2\psi^2 dy+ M[\nabla\phi^s](P)^2
\\
&\leq \sigma \fint_{(1+\EPS)\Delta(x,\alpha\eta s)}|\nabla\rho_\eta|^2\psi^2 dy+ \kappa_0^2.
\end{align*}
Hiding the first term on the left side finishes the proof of the claim.
\end{proof}

\section{Area function bounds on the parts of the derivative of \(\theta\)}\label{section:areaFctBounds}

Recall that a boundary ball \(\Delta\) was fixed and the Hodge decomposition and hence also \(\rho\) and \(\theta\) were defined in dependence of the chosen boundary ball.

The following lemma collects all important area function estimates that we use in the proof of \refthm{thm:MainTheorem}.
 
\begin{lemma}\label{lemma:L^2estimatesForSquareFunctions}
Assuming \eqref{cond:L1-Linfty} we have the following estimates:
\begin{enumerate}[(a)]
    \item \(\int_{T(\Delta)}|\nabla \partial_s \rho_\eta(x,s)|^2 sdxds\lesssim |\Delta|,\)
    \item \(\int_{T(\Delta)}|L^s \rho_\eta(x,s)|^2s dxds\lesssim |\Delta|,\)
    \item \(\int_{T(\Delta)}|\partial_s A(x,s)\nabla\rho_\eta(x,s)|^2s dxds\lesssim |\Delta|,\)
    \item \(\int_{T(\Delta)}\frac{|\partial_s \theta_\eta(x,s)|^2}{s}dxds\lesssim |\Delta|,\)
    \item \(\int_{T(\Delta)}\frac{|\theta_\eta(x,s)|^2}{s^3}dxds\lesssim |\Delta|.\)
\end{enumerate}
\end{lemma}
The proof will appear at the end of this section. However, the idea is to split the functions the derivatives of \(\rho\) or \(\theta\) into its components and show these area function bounds for the components individually. Hence, let us discuss the \(L^2\) area function expression involving \(\partial_s\phi^s,w_t,w_s^{(1)}\) and \(w_s^{(2)}\) separately. 

\subsection{The partial derivative parts \(\partial_s\phi^s\) and \(w_s^{(1)}\)}

Instead of discussing \(\partial_s\phi^s\) and \(w_s^{(1)}\) separately, we are going to use that their difference
\[\partial_s\phi^s-w_s^{(1)}=\partial_s\phi^s-e^{-(\eta s)^2L^s}\partial_s\phi^s\]
gives us better properties.

\begin{lemma}\label{lemma:SqFctBoundsForpartial_sPhiAndw_s^1}
If we assume \eqref{condition:L^2Carlesontypecond}, the following area function bounds hold:
\begin{enumerate}[(i)]
    \item \(\Vert \mathcal{A}(\partial_s\phi^s-w_s^{(1)})\Vert_{L^2(\mathbb{R}^n)}^2=\Big\Vert\Big(\int_0^\infty\frac{|\partial_s\phi^s-w_s^{(1)}|^2}{s}ds\Big)^{1/2}\Big\Vert_{L^2(\mathbb{R}^n)}^2\lesssim |\Delta|, \textrm{ and}\)\label{lemma:w_s^1SqFctBound1}
    \item \begin{align*}
        &\Vert \mathcal{A}(\partial_s\phi^s-w_s^{(1)})\Vert_{L^2(\mathbb{R}^n)}^2 + \Vert \mathcal{A}(\partial_s\phi^s-w_s^{(1)})\Vert_{L^2(\mathbb{R}^n)}^2
        \\
        &=\Big\Vert\Big(\int_0^\infty\frac{|s\nabla\partial_s\phi^s|^2}{s}ds\Big)^{1/2}\Big\Vert_{L^2(\mathbb{R}^n)}^2+\Big\Vert\Big(\int_0^\infty\frac{|s\nabla w_s^{(1)}|^2}{s}ds\Big)^{1/2}\Big\Vert_{L^2(\mathbb{R}^n)}^2\lesssim |\Delta|.
    \end{align*}\label{lemma:w_s^1SqFctBound2}
\end{enumerate}
\end{lemma}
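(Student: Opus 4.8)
The plan is to reduce both square function estimates, by Fubini's theorem, to bounds for the $L^2(\mathbb{R}^n)$-norms (in the space variable $x$) of the quantities involved, weighted and integrated in $s$; the inputs are the Hodge-decomposition derivative bound \eqref{equation:L2ofpartial_snablaphiBounded} (recall from the construction of the Hodge decomposition that $\partial_s\phi^s\in W_0^{1,2}(3\Delta)$ and $\Vert\nabla\partial_s\phi^s\Vert_{L^2(3\Delta)}\lesssim\Vert\partial_s A(\cdot,s)\Vert_\infty|\Delta|^{1/2}$), the heat-semigroup estimates for the $t$-independent family $\{L^s\}_{s>0}$, and the mixed Carleson condition \eqref{condition:L^2Carlesontypecond}.

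The second estimate is immediate. Since $w_s^{(1)}(\cdot,s)=e^{-\eta^2s^2L^s}\partial_s\phi^s$ and $\partial_s\phi^s\in W_0^{1,2}(3\Delta)$, \reflemma{lemma:nablaSemigroupBoundedByNablaf} gives $\Vert\nabla w_s^{(1)}(\cdot,s)\Vert_{L^2(\mathbb{R}^n)}\lesssim\Vert\nabla\partial_s\phi^s\Vert_{L^2(3\Delta)}$. Hence, by Fubini's theorem and \eqref{equation:L2ofpartial_snablaphiBounded},
\[\int_0^\infty\int_{\mathbb{R}^n}\frac{|s\nabla\partial_s\phi^s|^2+|s\nabla w_s^{(1)}|^2}{s}\,dx\,ds\lesssim|\Delta|\int_0^\infty s\Vert\partial_s A(\cdot,s)\Vert_\infty^2\,ds\lesssim|\Delta|\]
by \eqref{condition:L^2Carlesontypecond}; taking square roots yields the second estimate.

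The first estimate is the crux, because neither $\partial_s\phi^s$ nor $w_s^{(1)}$ satisfies such a bound on its own: a Poincaré inequality only gives $\Vert\partial_s\phi^s\Vert_{L^2}\lesssim l(\Delta)\Vert\nabla\partial_s\phi^s\Vert_{L^2}$, so $\int_0^\infty s^{-1}\Vert\partial_s\phi^s\Vert_{L^2}^2\,ds$ is controlled only by $\int_0^\infty s^{-1}\Vert\partial_s A(\cdot,s)\Vert_\infty^2\,ds$, which diverges at $s=0$ (the hypothesis $|\partial_t A|\lesssim 1/t$ makes only the weight $s\Vert\partial_s A\Vert_\infty^2$ integrable, not $s^{-1}\Vert\partial_s A\Vert_\infty^2$). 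One must therefore exploit the cancellation in $\partial_s\phi^s-w_s^{(1)}=(I-e^{-\eta^2s^2L^s})\partial_s\phi^s$. The tool is the uniform estimate
\[\Vert(I-e^{-tL})g\Vert_{L^2(\mathbb{R}^n)}\lesssim t^{1/2}\Vert\nabla g\Vert_{L^2(\mathbb{R}^n)},\qquad g\in W_0^{1,2}(\mathbb{R}^n),\]
valid for any $t$-independent uniformly elliptic $L$ with ellipticity constants $\lambda_0,\Lambda_0$: indeed $I-e^{-tL}=-\int_0^t\partial_\tau e^{-\tau L}\,d\tau=\int_0^t L^{1/2}e^{-\tau L}L^{1/2}\,d\tau$, where for $h\in L^2$ one has $\Vert L^{1/2}e^{-\tau L}h\Vert_{L^2}\approx\Vert\nabla e^{-\tau L}h\Vert_{L^2}\lesssim\tau^{-1/2}\Vert h\Vert_{L^2}$ by \refcor{cor:NablaL^-1/2Bounded} and \refprop{prop:L2NormBoundsOfHeatSemigroup}, while $\Vert L^{1/2}g\Vert_{L^2}\lesssim\Vert\nabla g\Vert_{L^2}$ by \refprop{prop:KatoConjecture}, so that $\int_0^t\tau^{-1/2}\,d\tau\cdot\Vert\nabla g\Vert_{L^2}\lesssim t^{1/2}\Vert\nabla g\Vert_{L^2}$. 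Applying this with $t=\eta^2s^2$, $L=L^s$, $g=\partial_s\phi^s$, and then \eqref{equation:L2ofpartial_snablaphiBounded}, gives $\Vert\partial_s\phi^s-w_s^{(1)}\Vert_{L^2(\mathbb{R}^n)}\lesssim\eta s\Vert\partial_s A(\cdot,s)\Vert_\infty|\Delta|^{1/2}$, whence by Fubini's theorem
\[\int_0^\infty\int_{\mathbb{R}^n}\frac{|\partial_s\phi^s-w_s^{(1)}|^2}{s}\,dx\,ds\lesssim\eta^2|\Delta|\int_0^\infty s\Vert\partial_s A(\cdot,s)\Vert_\infty^2\,ds\lesssim|\Delta|\]
by \eqref{condition:L^2Carlesontypecond}; taking square roots yields the first estimate.

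The main point requiring care is that all the constants in \refprop{prop:KatoConjecture}, \refcor{cor:NablaL^-1/2Bounded}, \refprop{prop:L2NormBoundsOfHeatSemigroup} and \reflemma{lemma:nablaSemigroupBoundedByNablaf} depend only on $n,\lambda_0,\Lambda_0$, not on $s$; this uniformity over the family $\{L^s\}_{s>0}$ is exactly what legitimises the final integration in $s$ against \eqref{condition:L^2Carlesontypecond}. Everything else is a routine application of Fubini's theorem together with the results already recorded.
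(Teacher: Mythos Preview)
Your argument is correct. Part \ref{lemma:w_s^1SqFctBound2} is handled exactly as in the paper. For part \ref{lemma:w_s^1SqFctBound1} you take a genuinely different, purely $L^2$-based route: you bound $\Vert(I-e^{-tL})g\Vert_{L^2}\lesssim t^{1/2}\Vert\nabla g\Vert_{L^2}$ via the factorisation $Le^{-\tau L}=L^{1/2}e^{-\tau L}L^{1/2}$ together with Kato (\refprop{prop:KatoConjecture}/\refcor{cor:NablaL^-1/2Bounded}) and the gradient bound in \refprop{prop:L2NormBoundsOfHeatSemigroup}. The paper instead obtains a \emph{pointwise} estimate $(\partial_s\phi^s-w_s^{(1)})(x)=\int_0^s\partial_t e^{-(\eta t)^2L^s}\partial_s\phi^s\,dt\lesssim\eta s\,M[\nabla\partial_s\phi^s](x)$ from \refprop{Proposition11}, and then passes to $L^2$ through the maximal function. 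Both arrive at the same slice bound $\Vert\partial_s\phi^s-w_s^{(1)}\Vert_{L^2}\lesssim\eta s\Vert\partial_s A(\cdot,s)\Vert_\infty|\Delta|^{1/2}$, after which the proofs are identical. Your approach is more self-contained (it avoids the Gaussian kernel bounds underlying \refprop{Proposition11}); the paper's approach yields additional pointwise information, which is not needed here but matches the style of \reflemma{lemma:thetaPointwiseBoundOnGoodSetF}.
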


\begin{proof}
    For \eqref{lemma:w_s^1SqFctBound1} we use the idea that already appeared in the proof of \reflemma{lemma:thetaPointwiseBoundOnGoodSetF}, where we use \refprop{prop:KatoConjecture} to get 
    \begin{align*}
        (\partial_s\phi^s-e^{-(\eta s)^2L^s}\partial_s\phi^s)(x)&=\int_0^s\partial_t e^{-(\eta t)^2L^s}\partial_s \phi^sdt\lesssim \eta\int_0^s M[\nabla\partial_s\phi^s](x)dt
        \\
        &\lesssim M[\nabla\partial_s\phi^s](x)\eta s.
    \end{align*}
    Hence,
    \begin{align*}
        \Big\Vert\Big(\int_0^\infty\frac{|\partial_s\phi^s-w_s^{(1)}|^2}{s}ds\Big)^{1/2}\Big\Vert_{L^2(\mathbb{R}^n)}^2&\lesssim \int_0^\infty \int_{\mathbb{R}^n}|M[\nabla\partial_s\phi^s](x)|^2sdxds
        \\
        &\lesssim \int_0^\infty \Vert\nabla\partial_s\phi^s\Vert_{L^2(\mathbb{R}^n)}^2sds,
    \end{align*}
    
    and with \eqref{equation:L2ofpartial_snablaphiBounded} we get \(\int_0^\infty\Vert\partial_s A(\cdot,s)\Vert_\infty^2 |\Delta| s ds\).

    \hfill\\
    For \eqref{lemma:w_s^1SqFctBound2} we apply \reflemma{lemma:nablaSemigroupBoundedByNablaf} and \eqref{equation:L2ofpartial_snablaphiBounded} to obtain 
    \[\Vert\nabla w_s^{(1)}\Vert_{L^2(\mathbb{R}^n)}^2\lesssim \Vert \nabla\partial_s\phi^s\Vert_{L^2(\mathbb{R}^n)}^2\lesssim \Vert\partial_s A(\cdot,s)\Vert_\infty^2 |\Delta|,\] which implies
    \begin{align*}
        \Big\Vert\Big(\int_0^\infty\frac{|s\nabla\partial_s\phi^s|^2}{s}ds\Big)^{1/2}\Big\Vert_{L^2(\mathbb{R}^n)}^2&,\Big\Vert\Big(\int_0^\infty\frac{|s\nabla w_s^{(1)}|^2}{s}ds\Big)^{1/2}\Big\Vert_{L^2(\mathbb{R}^n)}^2
        \\
        &\qquad\qquad\lesssim \int_0^\infty\Vert\partial_s A(\cdot,s)\Vert_\infty^2 |\Delta| s ds.
    \end{align*}
    
\end{proof}

\subsection{The partial derivative part \(w_s^{(2)}\)}

\begin{lemma}\label{lemma:w_s^2/nablaw_s^2spatialL^2Bound}
    It holds that
    \begin{enumerate}[(i)]
        \item \label{lemma:w_s^2spatialL^2Bound} \(\Vert w_s^{(2)}(\cdot,s)\Vert_{L^2(\mathbb{R}^n)}\lesssim \Vert \partial_s A(\cdot,s)\Vert_{L^\infty(\mathbb{R}^n)}|\Delta|^{1/2}s,\) and hence
        \item \(\Vert\mathcal{A}(w_s^{(2)})\Vert_{L^2(\mathbb{R}^n)}^2=\Big\Vert\Big(\int_0^\infty\frac{|w_s^{(2)}|^2}{s}ds\Big)^{1/2}\Big\Vert_{L^2(\mathbb{R}^n)}^2\lesssim |\Delta|.\)\label{lemma:w_s^2SqFctBound1}
    \end{enumerate}
\end{lemma}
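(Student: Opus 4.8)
The plan is to prove the pointwise-in-$s$ bound \ref{lemma:w_s^2spatialL^2Bound} first, by a direct estimate on the Duhamel representation of $w_s^{(2)}$, and then to deduce the square-function bound \ref{lemma:w_s^2SqFctBound1} from it with essentially no extra work, using Tonelli's theorem and the mixed $L^2$--$L^\infty$ Carleson condition \eqref{condition:L^2Carlesontypecond}.

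For \ref{lemma:w_s^2spatialL^2Bound} I would fix $s>0$, write $L=L^s$ (a $t$-independent, uniformly elliptic operator whose constants do not depend on $s$), and start from
\[w_s^{(2)}(\cdot,s)=\int_0^{s}2\eta^2\tau\, e^{-\eta^2(s^2-\tau^2)L}\Div\!\big(\partial_s A(\cdot,s)\,\nabla e^{-\eta^2\tau^2L}\phi^s\big)\,d\tau.\]
The key ingredient is the smoothing estimate $\Vert e^{-tL}\Div(g)\Vert_{L^2}\lesssim t^{-1/2}\Vert g\Vert_{L^2}$ for $g\in L^2(\mathbb{R}^n;\mathbb{R}^n)$, which I would obtain by factoring $e^{-tL}\Div(g)=\big(L^{1/2}e^{-tL}\big)\big(L^{-1/2}\Div(g)\big)$: the inner factor is bounded on $L^2$ by \refcor{cor:NablaL^-1/2Bounded}, while \refprop{prop:KatoConjecture} and \refprop{prop:L2NormBoundsOfHeatSemigroup} give $\Vert L^{1/2}e^{-tL}F\Vert_{L^2}\approx\Vert\nabla e^{-tL}F\Vert_{L^2}\lesssim t^{-1/2}\Vert F\Vert_{L^2}$. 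Applying this inside the integral with $t=\eta^2(s^2-\tau^2)$, together with Minkowski's integral inequality, gives
\[\Vert w_s^{(2)}(\cdot,s)\Vert_{L^2}\lesssim \eta\,\Vert\partial_s A(\cdot,s)\Vert_{L^\infty}\int_0^s \frac{\tau}{\sqrt{s^2-\tau^2}}\,\big\Vert\nabla e^{-\eta^2\tau^2L}\phi^s\big\Vert_{L^2}\,d\tau.\]
Since $\phi^s\in W_0^{1,2}(3\Delta)$, \reflemma{lemma:nablaSemigroupBoundedByNablaf} together with \eqref{equation:L2ofnablaphiBounded} bounds $\Vert\nabla e^{-\eta^2\tau^2L}\phi^s\Vert_{L^2}\lesssim|\Delta|^{1/2}$ uniformly in $\tau$, and the one-dimensional integral $\int_0^s \tau(s^2-\tau^2)^{-1/2}\,d\tau$ equals $s$; this is exactly \ref{lemma:w_s^2spatialL^2Bound} (the extra factor $\eta$ only helps).

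For \ref{lemma:w_s^2SqFctBound1}, Tonelli's theorem gives
\[\Big\Vert\Big(\int_0^\infty\frac{|w_s^{(2)}|^2}{s}\,ds\Big)^{1/2}\Big\Vert_{L^2(\mathbb{R}^n)}^2=\int_0^\infty\frac{\Vert w_s^{(2)}(\cdot,s)\Vert_{L^2(\mathbb{R}^n)}^2}{s}\,ds,\]
and plugging in \ref{lemma:w_s^2spatialL^2Bound} bounds the right-hand side by $\lesssim|\Delta|\int_0^\infty\Vert\partial_s A(\cdot,s)\Vert_{L^\infty}^2\,s\,ds$, which is $\lesssim|\Delta|$ by \eqref{condition:L^2Carlesontypecond}. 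The only mildly delicate point is the $L^2$ smoothing bound for $e^{-tL^s}\Div$ used in the first part — this is where the Kato conjecture enters — but once it is available the rest reduces to an elementary integral computation and a direct appeal to \eqref{condition:L^2Carlesontypecond}; the $\tau$-integration is harmless precisely because $\nabla e^{-\eta^2\tau^2L}\phi^s$ stays uniformly bounded in $\tau$, so only the integrable singularity $\tau/\sqrt{s^2-\tau^2}$ remains.
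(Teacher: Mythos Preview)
Your argument is correct, but it takes a slightly different route from the paper's. For \ref{lemma:w_s^2spatialL^2Bound} the paper first performs the change of variables $r=\sqrt{s^2-\tau^2}$ in the Duhamel integral, then rewrites $\eta^2 r\, e^{-\eta^2 r^2 L^s}\Div$ as a constant times $\partial_r e^{-\eta^2 r^2 L^s}(L^s)^{-1}\Div$ and appeals to the pointwise maximal-function bound of \refprop{Proposition11} together with the $L^2$-boundedness of $M$ and of $\nabla(L^s)^{-1}\Div$; after this the integrand is bounded by a constant and the factor $s$ comes from integrating $dr$ over $(0,s)$. You instead stay in the original variable $\tau$ and use the $L^2$ smoothing estimate $\Vert e^{-tL^s}\Div\Vert_{L^2\to L^2}\lesssim t^{-1/2}$, derived directly from Kato via $e^{-tL}\Div=(L^{1/2}e^{-tL})(L^{-1/2}\Div)$; this leaves the explicit integrable singularity $\tau/\sqrt{s^2-\tau^2}$, whose integral over $(0,s)$ equals $s$. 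Both approaches end with \reflemma{lemma:nablaSemigroupBoundedByNablaf} and \eqref{equation:L2ofnablaphiBounded} to bound $\Vert\nabla e^{-\eta^2\tau^2 L^s}\phi^s\Vert_{L^2}$ uniformly, and the proofs of \ref{lemma:w_s^2SqFctBound1} are identical. Your argument is a bit more streamlined, avoiding the substitution and the pointwise bound of \refprop{Proposition11}; the paper's route has the minor advantage that the integrand becomes constant rather than singular at $\tau=s$, but the end result and the essential inputs are the same.
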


\begin{proof}
    We begin by proving \eqref{lemma:w_s^2spatialL^2Bound}. By Minkowski's inequality, we obtain
    \begin{align*}
        &\Big\Vert\int_0^{s} \eta^2\tau e^{-\eta^2(s^2-\tau^2)L^s}\Div(\partial_s A(\cdot,s)\nabla w(x,\eta^2\tau^2;s))d\tau\Big\Vert_{L^2(\mathbb{R}^n)}
        \\
        &\qquad =\int_0^{s} \Vert\eta^2\tau e^{-\eta^2(s^2-\tau^2)L^s}\Div(\partial_s A(\cdot,s)\nabla w(x,\eta^2\tau^2;s))\Vert_{L^2(\mathbb{R}^n)} d\tau.
    \end{align*}
    By Kato conjecture \refprop{prop:KatoConjecture}, \refprop{cor:NablaL^-1/2Bounded} and \refprop{Proposition11} we can observe 
    \begin{align*}
        &\Vert\eta^2\tau e^{-\eta^2(s^2-\tau^2)L^s}\Div(\partial_s \AP(\cdot,s)\nabla w(x,\eta^2\tau^2;s))\Vert_{L^2(\mathbb{R}^n)}
        \\
        &\lesssim\Vert\eta^2\tau \nabla e^{-\eta^2(s^2-\tau^2)L^s}(L^s)^{-1/2}\Div(\partial_s \AP(\cdot,s)\nabla w(x,\eta^2\tau^2;s))\Vert_{L^2(\mathbb{R}^n)}
        \\
        &\lesssim\frac{\eta^2\tau}{\sqrt{s^2-\tau^2}}\Vert M[(L^s)^{-1/2}\Div(\partial_s \AP(\cdot,s)\nabla w(x,\eta^2\tau^2;s))]\Vert_{L^2(\mathbb{R}^n)}
        \\
        &\lesssim\frac{\eta^2\tau}{\sqrt{s^2-\tau^2}}\Vert\partial_s \AP(\cdot,s)\nabla w(x,\eta^2\tau^2;s))\Vert_{L^2(\mathbb{R}^n)}
        \\
        &\lesssim\frac{\eta^2\tau}{\sqrt{s^2-\tau^2}}\Vert\partial_s \AP(\cdot,s)\Vert_{L^\infty(\mathbb{R}^n)} \Vert\nabla\phi^s\Vert_{L^2(\mathbb{R}^n)}
        \\
        &\lesssim\frac{\eta^2\tau}{\sqrt{s^2-\tau^2}}\Vert\partial_s \AP(\cdot,s)\Vert_{L^\infty(\mathbb{R}^n)} |\Delta|^{1/2}.
    \end{align*}
    Integrating \(\frac{\tau}{\sqrt{s^2-\tau^2}}\) from \(0\) to \(s\) gives the factor \(s\), whence \eqref{lemma:w_s^2spatialL^2Bound} follows.

    Now \eqref{lemma:w_s^2SqFctBound1} is an easy consequence of \eqref{lemma:w_s^2spatialL^2Bound} and \eqref{condition:L^2Carlesontypecond}, since
    \[\Big\Vert\Big(\int_0^\infty\frac{|w_s^{(2)}|^2}{s}ds\Big)^{1/2}\Big\Vert_{L^2(\mathbb{R}^n)}^2\lesssim \int_0^\infty\Vert \partial_s A(\cdot,s)\Vert_{L^\infty(\mathbb{R}^n)}^2|\Delta|s ds\lesssim |\Delta|.\]

\end{proof}

Next we show the area function bound for \(\nabla w_s^{(2)}\). For that recall that
\[v_2(x,t^2;s)=\int_0^t2\tau e^{-(t^2-\tau^2)L^s}\mathrm{div}(\partial_s\AP(x,s)\nabla e^{-\tau^2L^s}\phi^s)(x)d\tau,\]
and \(w_s^{(2)}(x,s)=v_2(x,\eta^2s^2,s)\).

\begin{lemma}\label{lemma:SqFctBoundsForw_s^2}
Let \(Q\subset \partial\Omega\) be a boundary cube of size s, i.e. \(l(Q)\approx s\), and the index \(i\in \mathbb{Z}\) such that \(s\in [2^{-i},2^{-i+1})\). If we assume \eqref{condition:L^2Carlesontypecond}, we have the following local bound involving \(w_s^{(2)}\)
\begin{align}
    \int_Q |\nabla w_s^{(2)}(x,s)|^2dx&\lesssim_\eta \frac{1}{s^2}\fint_{2^{-i-1}}^{2^{-i+2}}\int_{2Q}|v_2(x,\eta^2k^2;s)|^2dkdx\nonumber
    \\
    &\qquad+ \Vert\partial_s A(\cdot,s)\Vert_{L^\infty(3Q)}^2\fint_{2^{-i-3}}^{2^{-i+4}}\int_{2Q} |\nabla e^{-\eta^2t^2L^s}\phi^s|^2dxdt \nonumber
    \\
    &\qquad+ \Vert\partial_s A(\cdot,s)\Vert_{L^\infty(3Q)}^2 s^2 \fint_{2^{-i-3}}^{2^{-i+4}}\int_{2Q}|\nabla\partial_ke^{-\eta^2k^2L^s}\phi^s|^2dxdk.\label{lemma:localEstimateNablaw_s^2}
\end{align}
As a consequence we have 
\begin{align}
    \Vert \nabla w_s^{(2)}\Vert_{L^2(\mathbb{R}^n)}\lesssim \Vert \partial_s A(\cdot, s)\Vert_{L^\infty(\mathbb{R}^n)}|\Delta|^{1/2}\label{lemma:Nablaw_s^2SpatialBound},
\end{align}
and the area function bound
\begin{align}
    \Vert\mathcal{A}(s|\nabla w_s^{(2)}|)\Vert_{L^2(\mathbb{R}^n)}^2=\Big\Vert\Big(\int_0^\infty\frac{|s\nabla w_s^{(2)}|^2}{s}ds\Big)^{1/2}\Big\Vert_{L^2(\mathbb{R}^n)}^2\lesssim |\Delta|.\label{lemma:w_s^2SqFctBound2}
\end{align}

\end{lemma}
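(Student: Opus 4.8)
The plan is to establish the local estimate \eqref{lemma:localEstimateNablaw_s^2} first, by a local energy (Caccioppoli‑type) argument, and then to read off \eqref{lemma:Nablaw_s^2SpatialBound} and \eqref{lemma:w_s^2SqFctBound2} from it. The starting observation is that $w_s^{(2)}(\cdot,s)$ is the Duhamel correction coming from the ellipticized heat flow: with $s$ held fixed, $\tilde v(\cdot,t):=v_1(\cdot,t,s)$ solves the inhomogeneous equation $\partial_t\tilde v-L^s\tilde v=\Div G(\cdot,t)$ with $\tilde v(\cdot,0)=0$ and source $G(x,t):=\partial_sA(x,s)\,\nabla e^{-tL^s}\phi^s(x)$, and $w_s^{(2)}(\cdot,s)=\tilde v(\cdot,\eta^2s^2)$. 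Thus $\int_Q|\nabla w_s^{(2)}(x,s)|^2dx$ is the localized squared gradient of a solution of a non‑homogeneous, non‑symmetric, only‑bounded‑coefficient parabolic equation, evaluated at the single heat‑time $\eta^2s^2\simeq s^2$; the dyadic window $[t_{i-3},t_{i+4}]$ has length $\simeq 2^i\simeq s$, which is precisely the range of the ellipticized time variable $k$ (heat‑time $=\eta^2k^2$) attached to the spatial scale $l(Q)\simeq s$.

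To prove \eqref{lemma:localEstimateNablaw_s^2} I would run the usual two‑step energy argument, adapted to the ellipticized parametrization and following the pattern of \cite{hofmann_dirichlet_2022}. Fix a spatial cut‑off $\psi\equiv1$ on $Q$ with $\supp\psi\subset 2Q$ and $|\nabla\psi|\lesssim s^{-1}$, together with a temporal cut‑off adapted to the dyadic windows. Step 1 is the standard Caccioppoli inequality for $\tilde v$ (testing with $\tilde v\psi^2$ against the temporal cut‑off): it bounds the time‑averaged gradient of $\tilde v$ over $2Q$ by $s^{-2}$ times an $L^2$‑average of $\tilde v$ over $2Q$ plus an $L^2$‑average of $|G|^2$, and undoing the change of variables heat‑time $=\eta^2k^2$ turns these into the first term (with $v(\cdot,k,s):=v_1(\cdot,\eta^2k^2,s)$) and, using $|G(x,t)|\le\|\partial_sA(\cdot,s)\|_\infty|\nabla e^{-tL^s}\phi^s(x)|$, into the second term. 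Step 2 upgrades this time‑averaged gradient bound to the pointwise value at $\eta^2s^2$: trading $|\nabla\tilde v|$ against $\partial_t\tilde v$ by a parabolic Young inequality with weight $\simeq$ heat‑time $\simeq s^2$ and differentiating the evolution once forces control on the time‑derivative of the source $\partial_tG(x,t)=\partial_sA(x,s)\nabla\partial_te^{-tL^s}\phi^s(x)$, and the reparametrization $\nabla\partial_te^{-tL^s}\phi^s|_{t=\eta^2k^2}=(2\eta^2k)^{-1}\nabla\partial_ke^{-(\eta k)^2L^s}\phi^s$ combined with the $s^2$ weight produces exactly the third term. The delicate point throughout is that, since $A$ is merely bounded and possibly non‑symmetric, one cannot test the equation against $\partial_t\tilde v$; instead one works with the semigroup directly, splitting the Duhamel integral $v_1=\int_0^{\eta^2s^2}e^{-(\eta^2s^2-\tau)L^s}\Div G(\cdot,\tau)d\tau$ at the midpoint, estimating the far half by the operator‑norm decay $\|\nabla e^{-\mu L^s}\Div\|_{L^2\to L^2}\lesssim\mu^{-1}$ and the near half through the algebraic identity $\nabla e^{-\mu L^s}\Div=\nabla(L^s)^{-1/2}\,(L^se^{-\mu L^s})\,(L^s)^{-1/2}\Div$ together with the Kato‑type bounds of \refcor{cor:NablaL^-1/2Bounded}, \refprop{Proposition11} and \reflemma{lemma:nablaSemigroupBoundedByNablaf}; when iterating a continuum of cut‑offs, the hole‑filling \reflemma{lemma:LEMMA5} brings the estimate into its final form. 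The new ingredient compared with the $t$‑independent case is exactly the source $\Div(\partial_sA\,\nabla w)$, and absorbing the endpoint singularity of $\nabla e^{-tL}\Div$ while keeping $\|\partial_sA\|_\infty$ in front is the main obstacle.

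Granted \eqref{lemma:localEstimateNablaw_s^2}, the spatial bound \eqref{lemma:Nablaw_s^2SpatialBound} follows by summing over a bounded‑overlap cover of $\mathbb{R}^n$ by cubes $Q$ of side‑length $\simeq s$; since the dyadic index $i$ depends only on $s$, the time‑windows are common to all such $Q$, so the sum over $Q$ of each right‑hand term collapses, up to a dimensional overlap constant, to the corresponding integral over $\mathbb{R}^n$. The first term becomes $\frac1{s^2}\fint_{t_{i-1}}^{t_{i+2}}\|v(\cdot,k,s)\|_{L^2(\mathbb{R}^n)}^2dk$, which by \reflemma{lemma:w_s^2/nablaw_s^2spatialL^2Bound} applied at ellipticized time $k$ (giving $\|v(\cdot,k,s)\|_{L^2}\lesssim\|\partial_sA(\cdot,s)\|_\infty|\Delta|^{1/2}k$) is $\lesssim\|\partial_sA(\cdot,s)\|_\infty^2|\Delta|$; the second becomes $\lesssim\|\partial_sA(\cdot,s)\|_\infty^2\fint\|\nabla e^{-t^2L^s}\phi^s\|_{L^2}^2dt\lesssim\|\partial_sA(\cdot,s)\|_\infty^2|\Delta|$ by \reflemma{lemma:nablaSemigroupBoundedByNablaf} and \eqref{equation:L2ofnablaphiBounded}; the third becomes $\lesssim\|\partial_sA(\cdot,s)\|_\infty^2s^2\fint k^{-2}\|\nabla\phi^s\|_{L^2}^2dk\lesssim\|\partial_sA(\cdot,s)\|_\infty^2|\Delta|$ by the second inequality of \reflemma{lemma:nablaSemigroupBoundedByNablaf} and $k\simeq s$. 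Adding the three gives $\|\nabla w_s^{(2)}\|_{L^2(\mathbb{R}^n)}\lesssim\|\partial_sA(\cdot,s)\|_{L^\infty(\mathbb{R}^n)}|\Delta|^{1/2}$. Finally \eqref{lemma:w_s^2SqFctBound2} is immediate: the square of its left‑hand side equals $\int_0^\infty s\,\|\nabla w_s^{(2)}\|_{L^2(\mathbb{R}^n)}^2ds\lesssim|\Delta|\int_0^\infty s\,\|\partial_sA(\cdot,s)\|_\infty^2ds\lesssim|\Delta|$ by the mixed $L^2$-$L^\infty$ Carleson condition \eqref{condition:L^2Carlesontypecond}.
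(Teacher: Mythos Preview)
Your overall architecture matches the paper's: a Caccioppoli inequality for $v$ gives the time-averaged gradient bound, the fundamental theorem of calculus (writing $\nabla v(x,s,s)=\nabla v(x,t,s)+\int_t^s\nabla\partial_k v\,dk$ and averaging in $t$) converts this to the pointwise-in-time value at the cost of a term in $\nabla\partial_k v$, and hole-filling via \reflemma{lemma:LEMMA5} closes the iteration; the consequences \eqref{lemma:Nablaw_s^2SpatialBound} and \eqref{lemma:w_s^2SqFctBound2} are then derived exactly as you describe, by summing over cubes and invoking \reflemma{lemma:w_s^2/nablaw_s^2spatialL^2Bound}, \reflemma{lemma:nablaSemigroupBoundedByNablaf} and \eqref{condition:L^2Carlesontypecond}.

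The one place where you diverge from the paper is your ``delicate point'' paragraph. The concern that one cannot test against $\partial_t\tilde v$ is a red herring here: the paper never does. Instead it proves a \emph{second} Caccioppoli inequality, now for $\partial_t v$ (this is \reflemma{Lemma:CacciopolliTypeInequalities}), by testing the PDE satisfied by $\partial_t v$ against $(\partial_t v)\eta^2$; the resulting term $\int\partial_t(\partial_t v)\,(\partial_t v)\,\eta^2$ is handled as $\tfrac12\int\partial_t\big((\partial_t v)^2\big)\eta^2$ and absorbed into the temporal cut-off. This gives a purely local bound on $\int|\nabla\partial_t v|^2$ in terms of $s^{-2}\int|\partial_t v|^2$ plus the two source terms involving $\nabla e^{-t^2L^s}\phi_s$ and $\nabla\partial_t e^{-t^2L^s}\phi_s$. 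The remaining $\int|\partial_t v|^2$ is then controlled by yet another cut-off argument (using $\partial_k v=2kL^s v+2k\Div(\partial_sA\nabla e^{-k^2L^s}\phi_s)$ and integrating by parts), iterated through \reflemma{lemma:LEMMA5}. Your proposed semigroup splitting via $\|\nabla e^{-\mu L^s}\Div\|_{L^2\to L^2}\lesssim\mu^{-1}$ and the Kato identity would produce a global $L^2(\mathbb{R}^n)$ bound directly, but not the local-in-$Q$ estimate \eqref{lemma:localEstimateNablaw_s^2} the lemma asserts; for that you would additionally need off-diagonal decay, which is more machinery than necessary. The paper's route stays entirely at the level of local energy inequalities.
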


\begin{proof}
In this proof we are going to abbreviate notation and set \(v(x,t;s)=v_2(x,\eta^2t^2;s)\). Let \(i\in \mathbb{Z}\) such that \(s\in [2^{-i},2^{-i+1})\). Here \(s\) will remain fixed for the whole argument of \eqref{lemma:localEstimateNablaw_s^2}. The parameter \(\eta\) does not play any role and in \(\lesssim\) suppressed constants might depend on \(\eta\).
We begin with the use of the Fundamental Theorem of Calculus and Jensen's inequality to obtain
\begin{align*}
    \int_Q |\nabla w_s^{(2)}(x,s)|^2 dx &= \int_Q |\nabla v(x,s;s)|^2 dx  = \fint_{2^{-i}}^{2^{-i+1}}\int_Q |\nabla v(x,s;s)|^2dtdx
    \\
    &=\fint_{2^{-i}}^{2^{-i+1}}\int_Q \Big|\nabla v(x,t;s) + \int_t^s\nabla\partial_k v(x,k;s)dk\Big|^2dtdx
    \\
    &\lesssim\fint_{2^{-i}}^{2^{-i+1}}\int_Q |\nabla v(x,t;s)|^2 dxdt
    \\
    &\qquad + \fint_{2^{-i}}^{2^{-i+1}}|s-t|\int_Q\int_t^s|\nabla \partial_k v(x,k;s)|^2dkdtdx
    \\
    &\lesssim\fint_{2^{-i}}^{2^{-i+1}}\int_Q |\nabla v(x,t;s)|^2 dxdt
    \\
    &\qquad  + s\int_Q\fint_{2^{-i}}^{2^{-i+1}}|\nabla\partial_k v(x,k;s)|^2dkdx=I_1+sI_2.
\end{align*}
For \(I_1\) we get by Caccioppoli type inequality \reflemma{Lemma:CacciopolliTypeInequalities}
\[I_1\lesssim \frac{1}{s^2}\fint_{2^{-i-1}}^{2^{-i+2}}\int_{2Q}|v(x,t;s)|^2dxdt + \Vert\partial_s A(\cdot,s)\Vert_{L^\infty(2Q)}^2\fint_{2^{-i-1}}^{2^{-i+2}}\int_{2Q}|\nabla e^{-\eta^2t^2L^s}\phi^s|^2dxdt.\]

For \(I_2\) we apply the other conclusion of \reflemma{Lemma:CacciopolliTypeInequalities} to get
\begin{align*}
    I_2&\lesssim\frac{1}{s^2}\Vert\partial_s A(\cdot,s)\Vert_{L^\infty(2Q)}^2\int_{2^{-i-1}}^{2^{-i+2}}\int_{2Q}|\nabla e^{-\eta^2t^2L^s}\phi^s|^2dxdt
    \\
    &\qquad + \Vert\partial_s A(\cdot,s)\Vert_{L^\infty(2Q)}^2 \int_{2^{-i-1}}^{2^{-i+2}}\int_{\frac{5}{4}Q} |\nabla \partial_t e^{-\eta^2t^2L^s}\phi^s|^2dxdt
    \\
    &\qquad+ \frac{1}{s^2}\int_{2^{-i-1}}^{2^{-i+2}}\int_{\frac{5}{4}Q} |\partial_k v(x,k;s)|^2 dxdk
\end{align*}

Further, we will have to apply \reflemma{lemma:LEMMA5}. For that, let us first introduce the parameters \(\frac{5}{4}\leq\EPS_1<\EPS_2\leq\frac{3}{2}\) and  \(d:=(\EPS_2-\EPS_1)/2\). Depending on \(\EPS_1\) and \(\EPS_2\), we can introduce enlargements of 
\[\frac{5}{4}Q\subset Q_{1}\subset \tilde{Q}\subset Q_{2}\subset\frac{3}{2}Q\]
by \(Q_{1}:=\EPS_1Q,\tilde{Q}:=(\EPS_1+d)Q\), and \(Q_2:=\EPS_2Q\). Corresponding the these cubes in \(x\), we also define the corresponding enlargements of 
\[Q^*:=\frac{5}{4}Q\times [2^{-i-1},2^{-i+2}]\subset Q_1^*\subset \tilde{Q}^*\subset Q_2^*\subset \frac{3}{2}Q\times [2^{-i-2},2^{-i+3}]=:\hat{Q}^*\]
in the full \((x,t)\)-space by 
\begin{align*}
&Q_1^*:=Q_1\times[\frac{1}{\EPS_1}2^{-i-1},\EPS_12^{-i+2}],\qquad \tilde{Q}^*:=\tilde{Q}\times[\frac{1}{\EPS_1+d}2^{-i-1},(\EPS_2+d) 2^{-i+2}], \textrm{ and}
\\
& Q_2^*:=Q_2\times[\frac{1}{\EPS_2}2^{-i-1},\EPS_2 2^{-i+2}].
\end{align*}
Now we can also introduce a smooth cut-off function \(\psi\) with \(\psi\equiv 1\) on \(Q_1^*\) and \(\psi\equiv 0\) on \(\Omega\setminus \tilde{Q}^*\) with \(|\nabla \psi|\lesssim \frac{1}{d s}\).

Then we have by the PDE that \(v\) satisfies (see proof of \reflemma{Lemma:CacciopolliTypeInequalities})

\begin{align*}
    &\int_{Q_1^*}|\partial_k v(x,k;s)|^2dkdx=\int_{\tilde{Q}^*}\partial_k v\big(2kL^s v + k\Div(\partial_s A\nabla e^{-\eta^2k^2L^s}\phi^s)\big)\psi^2 dkdx 
    \\
    &\qquad=\int_{\tilde{Q}^*}\nabla \partial_k v \cdot\AP\nabla v\psi^2kdxdk + \int_{\tilde{Q}^*}\nabla \partial_k v \cdot \partial_s A\nabla e^{-\eta^2k^2L^s}\phi^s\psi^2kdxdk
    \\
    &\qquad\quad + \int_{\tilde{Q}^*} \partial_k v \AP\nabla v\cdot\nabla\psi^2kdxdk + \int_{\tilde{Q}^*} \partial_k v \partial_s A\nabla e^{-\eta^2k^2L^s}\phi^s\cdot\nabla\psi^2kdxdk
    \\
    &\qquad=: II_1+II_2+II_3+II_4.
\end{align*}
We estimate each integral separately using the same techniques consisting of Hölder's inequality and Caccioppoli type inequality \reflemma{Lemma:CacciopolliTypeInequalities} while also hiding terms with the factor \(0<\sigma<1\) on the left hand side. Although this is straight forward, we will be more precise and show these calculations for all terms in detail. First, for \(II_1\)
\begin{align*}
    II_1&\lesssim s\Big(\int_{\tilde{Q}^*}|\nabla v|^2dxdk\Big)^{1/2}\Big(\int_{\tilde{Q}^*}|\nabla\partial_k v|^2dxdk\Big)^{1/2}
    \\
    &\lesssim s\Big(\frac{1}{d^2s^2}\int_{Q_2^*}|v(x,k;s)|^2dxdk + \Vert\partial_s A(\cdot,s)\Vert_{L^\infty}^2\int_{Q_2^*}|\nabla e^{-\eta^2k^2L^s}\phi^s|^2dxdk\Big)^{1/2}
    \\
    &\quad\cdot \Big(\frac{1}{d^2s^2}\int_{Q_2^*}|\partial_k v(x,k;s)|^2dxdk + \frac{1}{d^2s^2}\Vert\partial_s A(\cdot,s)\Vert_{L^\infty}^2\int_{Q_2^*}|\nabla e^{-\eta^2k^2L^s}\phi^s|^2dxdk 
    \\
    &\qquad + \Vert\partial_s A(\cdot,s)\Vert_{L^\infty}^2\int_{Q_2^*}|\nabla\partial_k e^{-\eta^2k^2L^s}\phi^s|^2dxdk\Big)^{1/2}
    \\
    &\lesssim \sigma\int_{Q_2^*}|\partial_k v(x,k;s)|^2\eta^2dxdk + \frac{1}{d^4s^2}\int_{Q_2^*}|v(x,k;s)|^2dxdk
    \\
    &\qquad + \Vert\partial_s A(\cdot,s)\Vert_{L^\infty}^2\int_{Q_2^*}|\nabla e^{-\eta^2k^2L^s}\phi^s|^2dxdk
    \\
    &\qquad + \Vert\partial_s A(\cdot,s)\Vert_{L^\infty}^2 d^2s^2 \int_{Q_2^*}|\nabla\partial_k e^{-\eta^2k^2L^s}\phi^s|^2dxdk.
\end{align*}

Furthermore, we have
\begin{align*}
    II_2&\lesssim \sigma d^2s^2\int_{\tilde{Q}^*}|\nabla \partial_k v|^2dxdk + \frac{1}{d^2}\Vert\partial_s A(\cdot,s)\Vert_{L^\infty}^2\int_{\tilde{Q}^*}|\nabla e^{-\eta^2k^2L^s}\phi^s|^2dxdk
    \\
    &\lesssim \sigma\int_{Q_2^*}|\partial_k v(x,k;s)|^2dxdk + \big(\frac{1}{d^2}+1\big)\Vert\partial_s A(\cdot,s)\Vert_{L^\infty}^2\int_{Q_2^*}|\nabla e^{-\eta^2 k^2L^s}\phi^s|^2dxdk
    \\
    &\qquad + \Vert\partial_s A(\cdot,s)\Vert_{L^\infty}^2 d^2s^2 \int_{Q_2^*}|\nabla\partial_ke^{-\eta^2 k^2L^s}\phi^s|^2dxdk.
\end{align*}
For the last two integrals, we have
\begin{align*}
    II_3&\lesssim \sigma \int_{\tilde{Q}^*}|\partial_k v|^2dxdk + \frac{C_\sigma}{d^2}\int_{\tilde{Q}^*}|\nabla v|^2dxdk
    \\
    &\lesssim \sigma \int_{Q_2^*}|\partial_k v|^2dxdk + \frac{1}{d^4s^2}\int_{Q_2^*}|v(x,k;s)|^2dxdk
    \\
    &\qquad+ \frac{1}{d^2}\Vert\partial_s A(\cdot,s)\Vert_{L^\infty}^2\int_{Q_2^*}|\nabla e^{-\eta^2k^2L^s}\phi^s|^2dxdk,
\end{align*}
and
\begin{align*}
    II_4\lesssim \sigma \int_{\tilde{Q}^*}|\partial_k v|^2dxdk + \frac{1}{d^2}\Vert\partial_s A(\cdot,s)\Vert_{L^\infty}^2\int_{\tilde{Q}^*}|\nabla e^{-\eta^2k^2L^s}\phi^s|^2dxdk.
\end{align*}

Putting all these integrals together we obtain 
\begin{align*}
    \int_{Q_1^*}|\partial_k v|^2dxdk&\lesssim \sigma\int_{Q_2^*}|\partial_k v(x,k;s)|^2dxdk + \frac{1}{d^4s^2}\int_{Q_2^*}|v(x,k;s)|^2dxdk
    \\
    &\qquad + \big(1+\frac{1}{d^2}\big)\Vert\partial_s A(\cdot,s)\Vert_{L^\infty}^2\int_{Q_2^*}|\nabla e^{-\eta^2k^2L^s}\phi^s|^2dxdk
    \\
    &\qquad + \Vert\partial_s A(\cdot,s)\Vert_{L^\infty}^2 d^2s^2 \int_{Q_2^*}|\nabla\partial_k e^{-\eta^2k^2L^s}\phi^s|^2dxdk.
\end{align*}

With \reflemma{lemma:LEMMA5} this leads to
\begin{align*}
    I_2&\lesssim\frac{1}{s^2}\Vert\partial_s A(\cdot,s)\Vert_{L^\infty}^2\int_{\hat{Q}^*}|\nabla\partial_k e^{-\eta^2k^2L^s}\phi^s|^2dxdk
    \\
    &\qquad + \Vert\partial_s A(\cdot,s)\Vert_{L^\infty}^2 \int_{\hat{Q}^*} |\nabla \partial_k e^{-\eta^2k^2L^s}\phi^s|^2dxdk
    + \frac{1}{s^2}\int_{\hat{Q}^*} |\partial_k v(x,k;s)|^2 dxdk
    \\
    &\lesssim \frac{1}{s^4}\int_{\hat{Q}^*}|v(x,k;s)|^2dkdx
     + \frac{1}{s^2}\Vert\partial_s A(\cdot,s)\Vert_{L^\infty}^2 \int_{\hat{Q}^*} |\nabla e^{-\eta^2k^2L^s}\phi^s|^2dxdk 
    \\
    &\qquad+ \Vert\partial_s A(\cdot,s)\Vert_{L^\infty}^2 \int_{\hat{Q}^*}|\nabla\partial_ke^{-\eta^2k^2L^s}\phi^s|^2dxdk,
\end{align*}

and hence
\begin{align*}
    \int_Q |\nabla w_s^{(2)}(x,s)|^2dx&=I_1+sI_2
    \\
    &\lesssim \frac{1}{s^3}\int_{2^{-i-3}}^{2^{-i+4}}\int_{2Q}|v(x,k;s)|^2dkdx
    \\
    &\qquad + \frac{1}{s}\Vert\partial_s A(\cdot,s)\Vert_{L^\infty}^2\int_{2^{-i-3}}^{2^{-i+4}}\int_{2Q} |\nabla e^{-\eta^2k^2L^s}\phi^s|^2dxdk 
    \\
    &\qquad+  s\Vert\partial_s A(\cdot,s)\Vert_{L^\infty}^2\int_{2^{-i-3}}^{2^{-i+4}}\int_{2Q}|\nabla\partial_ke^{-\eta^2k^2L^s}\phi^s|^2dxdk.
\end{align*}

Now, we can show the global estimates (\ref{lemma:Nablaw_s^2SpatialBound}) and (\ref{lemma:w_s^2SqFctBound2}). Recall that \(\mathcal{D}_i(\mathbb{R}^n)\) denotes a collection of cubes \(Q\) of size \(l(Q)\approx 2^{-i}\approx s\) that cover \(\mathbb{R}^n\) with finite overlap. First for (\ref{lemma:Nablaw_s^2SpatialBound}) we have 
\begin{align*}
    &\int_{\mathbb{R}^n} |\nabla w_s^{(2)}(x,s)|^2 sdxds\leq \sum_{Q\in \mathcal{D}_i(\mathbb{R}^n)} |\nabla w_s^{(2)}(x,s)|^2 sdxds
    \\
    &\qquad\lesssim \sum_{Q\in \mathcal{D}_i(\mathbb{R}^n)}\Big(\frac{1}{s^2}\fint_{2^{-i-3}}^{2^{-i+4}}\int_{2Q}|v(x,k;s)|^2dkdx
    \\
    &\qquad\qquad + \Vert\partial_s A(\cdot,s)\Vert_{L^\infty}^2\int_{2^{-i-3}}^{2^{-i+4}}\int_{2Q} |\nabla e^{-\eta^2k^2L^s}\phi^s|^2dxdk 
    \\
    &\qquad\qquad+  s^2\Vert\partial_s A(\cdot,s)\Vert_{L^\infty}^2\int_{2^{-i-3}}^{2^{-i+4}}\int_{2Q}|\nabla\partial_ke^{-\eta^2k^2L^s}\phi^s|^2dxdk\Big)
    \\
    &\qquad\lesssim \frac{1}{s^2}\fint_{2^{-i-3}}^{2^{-i+4}}\int_{\mathbb{R}^n}|v(x,k;s)|^2dkdx
    \\
    &\qquad\qquad + \Vert\partial_s A(\cdot,s)\Vert_{L^\infty}^2\fint_{2^{-i-3}}^{2^{-i+4}}\int_{\mathbb{R}^n} |\nabla e^{-\eta^2k^2L^s}\phi^s|^2dxdk 
    \\
    &\qquad\qquad+  s^2\Vert\partial_s A(\cdot,s)\Vert_{L^\infty}^2\fint_{2^{-i-3}}^{2^{-i+4}}\int_{\mathbb{R}^n}|\nabla\partial_ke^{-\eta^2k^2L^s}\phi^s|^2dxdk.
\end{align*}

We note here that due to \reflemma{lemma:nablaSemigroupBoundedByNablaf} and (\ref{equation:L2ofnablaphiBounded})
\[\Vert\partial_s A(\cdot,s)\Vert_{L^\infty(3\Delta)}^2 \int_{2^{-i-2}}^{2^{-i+3}}\int_{\mathbb{R}^n}|\nabla e^{-\eta^2t^2L^s}\phi^s|^2dxdk\lesssim \Vert\partial_s A(\cdot,s)\Vert_{L^\infty(3\Delta)}^2|\Delta|,\]
and
\[\Vert\partial_s A(\cdot,s)\Vert_{L^\infty(3\Delta)}^2 s^2 \fint_{2^{-i-3}}^{2^{-i+4}}\int_{\mathbb{R}^n}|\nabla\partial_t e^{-\eta^2t^2L^s}\phi^s|^2dxdk\lesssim \Vert\partial_s A(\cdot,s)\Vert_{L^\infty(3\Delta)}^2|\Delta|.\]

Since we can also prove 
\[\int_{\mathbb{R}^n}|v(x,t;s)|^2dx\lesssim \Vert\partial_s A(\cdot,s)\Vert_{L^\infty(3\Delta)}^2|\Delta|s^2 \]
analogously to \eqref{lemma:w_s^2spatialL^2Bound} in \reflemma{lemma:w_s^2/nablaw_s^2spatialL^2Bound}, we obtain 

\begin{align*}
    \int_{3\Delta} |\nabla w_s^{(2)}(x,s)|^2 sdxds\lesssim \Vert\partial_s A(\cdot,s)\Vert_{L^\infty(\mathbb{R}^n)}^2|\Delta|,
\end{align*}
which yields (\ref{lemma:Nablaw_s^2SpatialBound}). At last, (\ref{lemma:w_s^2SqFctBound2}) is a direct consequence of (\ref{lemma:Nablaw_s^2SpatialBound}). To see this we note with (\ref{condition:L^2Carlesontypecond}) 
\begin{align*}
    \int_0^\infty \int_{\mathbb{R}^n} |\nabla w_s^{(2)}(x,s)|^2 sdxds \lesssim |\Delta|\int_0^\infty \Vert\partial_s A(\cdot,s)\Vert_{L^\infty(\mathbb{R}^n)}^2s ds\lesssim |\Delta|.
\end{align*}

\end{proof}

\subsection{The partial derivative part \(w_t\)}

\begin{lemma}\label{lemma:SqFctBoundsForw_t}
If we assume \eqref{cond:L1-Linfty}, the following area function bounds involving \(w_t\) hold
\begin{enumerate}[(i)]
    \item \[\Vert \mathcal{A}(w_t)\Vert_{L^2(\mathbb{R}^n)}^2=\Big\Vert\Big(\int_0^\infty\frac{|sL^se^{-\eta^2s^2L^s}\phi^s|^2}{s}ds\Big)^{1/2}\Big\Vert_{L^2(\mathbb{R}^n)}^2\lesssim |\Delta|,\]\label{lemma:w_tSqFctBound1}
    \item \[\Vert \mathcal{A}(s\nabla w_t)\Vert_{L^2(\mathbb{R}^n)}^2=\Big\Vert\Big(\int_0^\infty\frac{|s\nabla w_t|^2}{s}ds\Big)^{1/2}\Big\Vert_{L^2(\mathbb{R}^n)}^2\lesssim |\Delta|,\textrm{ and}\label{lemma:w_tSqFctBound2}\]
    \item \[\Vert \mathcal{A}(s^2L^sw_t)\Vert_{L^2(\mathbb{R}^n)}^2=\Big\Vert\Big(\int_0^\infty\frac{|s^2L^sw_t|^2}{s}ds\Big)^{1/2}\Big\Vert_{L^2(\mathbb{R}^n)}^2\lesssim |\Delta|.\label{lemma:w_tSqFctBound3}\]
\end{enumerate}
\end{lemma}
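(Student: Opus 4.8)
Since $w_t(x,s)=2\eta^2 s\,L^s w(x,\eta^2 s^2,s)=2\eta^2 s\,L^s e^{-\eta^2 s^2 L^s}\phi^s(x)$, the first identity in (1) is nothing but the definition of $w_t$ (up to the fixed constant $2\eta^2$), and likewise $\nabla w_t=2\eta^2 s\,\nabla L^s e^{-\eta^2 s^2 L^s}\phi^s$, $L^s w_t=2\eta^2 s\,(L^s)^2 e^{-\eta^2 s^2 L^s}\phi^s$. Thus the three assertions are all square function estimates for the $s$-family of heat semigroups, of the shape $\int_0^\infty\!\int_{\mathbb{R}^n}|s^{m}\nabla^{k}(L^s)^{m}e^{-\eta^2 s^2 L^s}\phi^s|^2\,\tfrac{ds}{s}\,dx\lesssim_\eta|\Delta|$, with $(m,k)=(1,0)$ for (1), with $(m,k)=(1,1)$ and one extra power of $s$ for (2), and $(m,k)=(2,0)$ for (3). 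The plan is: first prove the $t$-independent model estimate, then absorb the $s$-variation of $L^s$ and $\phi^s$ using the Carleson hypothesis. I describe (1); (2) and (3) are the same with more derivatives.

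For the model estimate, a fixed uniformly elliptic operator $L$ enjoys, uniformly in $n,\lambda_0,\Lambda_0$, the quadratic estimate $\int_0^\infty\|(uL)^{\beta}e^{-uL}g\|_{L^2(\mathbb{R}^n)}^2\,\tfrac{du}{u}\lesssim\|g\|_{L^2(\mathbb{R}^n)}^2$ for every $\beta>0$, a consequence of the bounded holomorphic functional calculus for such operators (in the circle of results built on the resolved Kato conjecture, cf. \refprop{prop:KatoConjecture}). Writing $L^s e^{-rL^s}\phi^s=(L^s)^{1/2}e^{-rL^s}(L^s)^{1/2}\phi^s$, substituting $u=\eta^2 s^2$, and using \refprop{prop:KatoConjecture} together with \eqref{equation:L2ofnablaphiBounded} in the form $\|(L^s)^{1/2}\phi^s\|_{L^2}\lesssim\|\nabla\phi^s\|_{L^2}\lesssim|\Delta|^{1/2}$, one obtains the claimed bound whenever $L^s,\phi^s$ are held fixed. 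For (2) one factors off $\nabla e^{-uL^s/2}$, whose $L^2\to L^2$ norm is $\lesssim u^{-1/2}$ by \refprop{prop:L2NormBoundsOfHeatSemigroup} (or \reflemma{lemma:nablaSemigroupBoundedByNablaf}), before invoking the quadratic estimate; for (3) one takes $\beta=\tfrac32$.

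To remove the freezing, split $(0,\infty)=\bigcup_{j\in\Z}I_j$ with $I_j=[2^j,2^{j+1})$, and on $I_j$ replace $L^s$ by $L_j:=L^{2^j}$ and $\phi^s$ by $\phi_j:=\phi^{2^j}$. Under $u=\eta^2 s^2$ the $I_j$ are mapped to the tiling $\{[\eta^2 4^j,\eta^2 4^{j+1})\}_j$ of $(0,\infty)$, so the frozen main terms reassemble, after this substitution, into one integral of the form $\int_0^\infty\|(uL_{j(u)})^{1/2}e^{-uL_{j(u)}}g_{j(u)}\|^2\,\tfrac{du}{u}$, $g^s:=(L^s)^{1/2}\phi^s$, which we control by the fixed-coefficient quadratic estimate together with error terms measuring the coefficient variation along the dyadic scales. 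These errors are summable because $s\mapsto A(\cdot,s)$ has finite total variation: with $a_j:=\int_{I_j}\|\partial_\tau A(\cdot,\tau)\|_{L^\infty}\,d\tau$ one has $\sum_j a_j=\int_0^\infty\|\partial_\tau A(\cdot,\tau)\|_{L^\infty}\,d\tau\le C$ by \eqref{condition:L^1Carlesontypecond}, and $\|\nabla(\phi^s-\phi_j)\|_{L^2}\le\int_{I_j}\|\nabla\partial_\tau\phi^\tau\|_{L^2}\,d\tau\lesssim a_j|\Delta|^{1/2}$ by \eqref{equation:L2ofpartial_snablaphiBounded}; the operator differences $L_j-L^s=\mathrm{div}\big((A(\cdot,2^j)-A(\cdot,s))\nabla\,\cdot\,\big)$, which have $\|A(\cdot,2^j)-A(\cdot,s)\|_{L^\infty}\le a_j$ on $I_j$, are handled through Duhamel's formula $e^{-uL^s}-e^{-uL_j}=\int_0^u e^{-vL^s}(L_j-L^s)e^{-(u-v)L_j}\,dv$ with the difference kept sandwiched between smoothing factors, using $\|e^{-\rho L^s}\mathrm{div}(\cdot)\|_{L^2}\lesssim\rho^{-1/2}\|\cdot\|_{L^2}$ (dual to \refprop{prop:L2NormBoundsOfHeatSemigroup}) and $\|\nabla e^{-\rho L_j}\phi^s\|_{L^2}\lesssim\|\nabla\phi^s\|_{L^2}$ (\reflemma{lemma:nablaSemigroupBoundedByNablaf}). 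Each per-interval error carries a factor $a_j^2$, so the total error is $\lesssim_\eta|\Delta|\sum_j a_j^2\le|\Delta|\big(\sum_j a_j\big)^2\lesssim_\eta|\Delta|$, which closes (1); estimates (2) and (3) follow identically, the extra $\nabla$, respectively the higher power of $L^s$, being absorbed by \refprop{prop:L2NormBoundsOfHeatSemigroup} and \reflemma{lemma:nablaSemigroupBoundedByNablaf} before the quadratic estimate is applied.

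The genuinely delicate point is the coefficient–freezing step: the crude pointwise bound $\|w_t(\cdot,s)\|_{L^2}\lesssim\eta|\Delta|^{1/2}$ coming from \refcor{cor:PointwiseBoundsofw_tAndrhoByM(nablaphi^s)} is not integrable against $ds/s$, so one must reassemble the frozen pieces into a true quadratic estimate while keeping the cancellation it encodes and paying only the small summable factors $a_j$ for each freeze — in particular one has to organise the Duhamel expansion (and, where necessary, split the time integrals dyadically) so that every gradient falling on $L_j-L^s$ is compensated by a smoothing factor, and treat the resulting sum by an almost-orthogonality argument. This is precisely where the $L^1$–Carleson hypothesis \eqref{condition:L^1Carlesontypecond} (finite total variation of $A$ in the transversal direction) is essential, in contrast with the $t$-independent setting of \cite{hofmann_dirichlet_2022}, where $L^s$ and $\phi^s$ do not move and the model bound alone suffices.
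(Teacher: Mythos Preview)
Your reduction of (2) and (3) to (1) via the splitting $e^{-\eta^2 s^2 L^s}=e^{-\eta^2 s^2 L^s/2}e^{-\eta^2 s^2 L^s/2}$ together with \refprop{prop:L2NormBoundsOfHeatSemigroup} matches the paper exactly. For (1), however, the paper takes a different and more direct route. After the same substitution $\tau=s^2$ and the same Kato step $f_\tau:=(L^{\sqrt\tau})^{1/2}\phi^{\sqrt\tau}$, it does \emph{not} freeze the coefficients on dyadic blocks. Instead it computes $\partial_\tau\big(e^{-\eta^2\tau L^{\sqrt\tau}}f_\tau\big)$ directly: this is $-\eta^2 L^{\sqrt\tau}e^{-\eta^2\tau L^{\sqrt\tau}}f_\tau$ plus a Duhamel term (from the $\tau$-dependence of $L^{\sqrt\tau}$) plus $e^{-\eta^2\tau L^{\sqrt\tau}}\partial_\tau f_\tau$. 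Thus $\int_{\mathbb{R}^n} L^{\sqrt\tau}e^{-\eta^2\tau L^{\sqrt\tau}}f_\tau\cdot e^{-\eta^2\tau L^{\sqrt\tau}}f_\tau\,dx$ equals the exact derivative $-\tfrac12\tfrac{d}{d\tau}\|e^{-\eta^2\tau L^{\sqrt\tau}}f_\tau\|_2^2$ plus two correction integrals. The exact derivative integrates to boundary terms controlled by \refprop{prop:L2NormGoesTo0} and Kato; the corrections, after dualising the Duhamel piece and using $\int_0^{\eta^2\tau}\frac{dr}{\sqrt{r}\sqrt{\eta^2\tau-r}}\lesssim 1$, give precisely $\int_0^\infty\|\partial_s A(\cdot,s)\|_\infty\,|\Delta|\,ds\lesssim|\Delta|$ from \eqref{condition:L^1Carlesontypecond}. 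No discretisation or almost-orthogonality is needed.

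Your dyadic-freezing strategy is a discrete cousin of this and can be made to work, but the write-up leaves the main difficulty unresolved. The within-interval replacement errors do carry $a_j^2$ as you say, but the sentence ``which we control by the fixed-coefficient quadratic estimate'' glosses over the fact that the frozen main term $\sum_j\int_{I_j}\|(uL_j)^{1/2}e^{-uL_j}g_j\|^2\,\tfrac{du}{u}$ is \emph{not} a single fixed-coefficient quadratic estimate: the operator and the data change with $j$, and the naive per-interval bound $\lesssim|\Delta|$ diverges when summed. What is actually needed is the identity $\int_{u_j}^{u_{j+1}}\|L_j^{1/2}e^{-uL_j}g_j\|_2^2\,du=\tfrac12\big(\|e^{-u_jL_j}g_j\|_2^2-\|e^{-u_{j+1}L_j}g_j\|_2^2\big)$ followed by a telescoping in $j$, with seam errors (from replacing $(L_j,g_j)$ by $(L_{j+1},g_{j+1})$ at each endpoint) that are \emph{linear} in $a_j$ and hence summable by \eqref{condition:L^1Carlesontypecond}. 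That telescoping is exactly the discrete analogue of the paper's continuous integration by parts, and the seam errors need some care because $g_j=(L_j)^{1/2}\phi_j$ involves the square root of a varying operator. So the outline is sound, but the ``almost-orthogonality argument'' you invoke is really a telescoping identity, and it is that step --- not the $a_j^2$-error step --- which carries the proof.
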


\begin{proof}

Before we commence with the proof, we point out that all the implicit constants might depend on \(\eta\). In fact, without loss of generality we can set \(\eta=1\) since it will only change the involved constants but has otherwise no influence on the argument. Furthermore we observe the following using only the kernel bounds \eqref{eq:kernelbounds} of the operators \(e^{-tL},\partial_te^{-tL}\): Let \(f\) be a function with \(\mathrm{supp}(f)\subset E\) and let \(E, F\subset\mathbb{R}^n\) be two disjoint sets with \(d:=\mathrm{dist}(E,F)>0\). We call \(\tilde{F}:=F+B(0,d/2)\) an enlargement of \(F\) and we choose a cut-off function \(\psi\in C_0(\tilde{F})\) such that \(\psi\equiv 1\) on \(F\) and \(|\nabla \psi|\lesssim \frac{1}{r}\leq \frac{1}{2d}\). Then
\begin{align*}
    \Vert \nabla e^{-s^2L^s}f\Vert_{L^2(F)}^2&\lesssim \int_{\tilde{F}} A(x,s)\nabla e^{-s^2L^s}f(x)\cdot \nabla e^{-s^2L^s}f(x)\psi^2(x) dx
    \\
    &\lesssim \int_{\tilde{F}} L^s e^{-s^2L^s}f(x) e^{-s^2L^s}f(x)\psi^2(x)
    \\
    &\qquad + 2\psi(x)\nabla e^{-s^2L^s}f(x)\cdot \nabla \psi(x)  e^{-s^2L^s}f(x) dx
    \\
    &\lesssim \Vert L^s e^{-s^2L^s}f\Vert_{L^2(\tilde{F})} \Vert e^{-s^2L^s}f\Vert_{L^2(\tilde{F})}
    \\
    &\qquad + \sigma \Vert \psi\nabla e^{-s^2L^s}f \Vert^2_{L^2(\mathbb{R}^n)} + \frac{1}{\sigma}\Vert \frac{1}{r}e^{-s^2L^s}f\Vert_{L^2(\tilde{F})}^2.
\end{align*}
We can hide the third term on the left hand side and use the kernel estimates \eqref{eq:kernelbounds} to observe for \(x\in \tilde{F}\)
\begin{align*}
    e^{-s^2L^s}f(x)=\int_{E}K_{s^2}(x,y)f(y)dy\lesssim \frac{1}{s^n}e^{-c\frac{r^2}{s^2}}\int_{E}f(y)dy=\frac{1}{s^n}e^{-c\frac{r^2}{s^2}}\Vert f\Vert_{L^1}
\end{align*}
and similarly
\[L^s e^{-s^2L^s}(f)(x)\lesssim \frac{1}{s^{n+2}}e^{-c\frac{r^2}{s^2}}\Vert f\Vert_{L^1},\]
whence in total
\begin{align}
    \Vert \nabla e^{-s^2L^s}(f)\Vert_{L^2(F)}^2\lesssim \big(\frac{1}{s^{2n+2}}+\frac{1}{s^{2n}r^2}\big)e^{-c\frac{r^2}{s^2}}\Vert f\Vert_{L^1}^2\sigma(F)\label{observation:KernelIntoL1}.
\end{align}
\\

Now, let us begin with proving \eqref{lemma:w_tSqFctBound1}. By ellipticity of \(A\) and integration by parts we have
\begin{align*}
    &\int_\Omega\frac{|sL^se^{-s^2L^s}\phi^s|^2}{s}dxds 
    \\
    &=-\int_0^\infty\int_{\mathbb{R}^n}\AP(x,s)\nabla e^{-s^2 L^s}\phi^s(x) \cdot \nabla L^te^{-s^2 L^s}\phi^s(x) s dx ds
    \\
    & \lesssim\int_0^\infty\frac{1}{s}\Vert\nabla e^{-s^2 L^s}\phi^s\Vert_{L^2(\mathbb{R}^n)}^2 +  s^3\Vert\nabla L^se^{-s^2 L^s}\phi^s\Vert_{L^2(\mathbb{R}^n)}^2  dx ds
    \\
    &\lesssim\int_0^\infty C_\sigma\frac{1}{s}\Vert\nabla e^{-s^2 L^s}\phi^s\Vert_{L^2(\mathbb{R}^n)}^2 +  \sigma s\Vert\nabla e^{-s^2 L^s}(sL^s\phi^s)\Vert_{L^2(\mathbb{R}^n)}^2  dx ds
    \\
    & =:\int_0^\infty (C_\sigma I+\sigma II) ds.
\end{align*}
Here \(\sigma\) is a small constant which later will allow us to hide the integral \(II_3\) appearing in the estimate of \(II\) on the left hand side.
\smallskip

We start with handling \(I\). Let us fix a small \(a>0\) and consider \(\int_a^\infty I ds\). For this choice of \(a\) there exists a scale \(k\) with \(2^{-k}\approx a\) and the collection \(\mathcal{D}_k\) consists of boundary balls \(Q\) with \(l(Q)\approx 2^{-k}\approx a\) that cover \(\partial\Omega\) in such a way that the collection of \(2Q\) have finite overlap, i.e. \(|\sum_{Q\in \mathcal{D}_k} \chi_{Q}|\leq N\) for some \(N\in \mathbb{N}\). Then for \(s\geq a\) we have
\begin{align*}
s^2I&=\sum_{Q\in\mathcal{D}_k} s\Vert\nabla e^{-s^2 L^s}\phi^s\Vert_{L^2(Q)}^2=\sum_{Q\in\mathcal{D}_k}s\Vert\nabla e^{-s^2 L^s}(\phi^s-(\phi^s)_{2Q})\Vert_{L^2(Q)}^2
\\
&=\sum_{Q\in\mathcal{D}_k}s\Vert\nabla e^{-s^2 L^s}(\chi_{2Q}(\phi^s-(\phi^s)_{2Q}))\Vert_{L^2(Q)}^2
\\
&\qquad + \sum_{Q\in\mathcal{D}_k} s\Vert\nabla e^{-s^2 L^s}(\chi_{\mathbb{R}^n\setminus 2Q}(\phi^s-(\phi^s)_{2Q}))\Vert_{L^2(Q)}^2:=J+K.
\end{align*}

By observation \eqref{observation:KernelIntoL1} and Poincar\'{e} inequality we obtain for the second term
\begin{align*}
     K\lesssim &\sum_{Q\in\mathcal{D}_k}\sum_{l\geq 1} s\Vert\nabla e^{-s^2 L^s}(\chi_{2^{l+1}Q\setminus 2^lQ}(\phi^s-(\phi^s)_{2Q}))\Vert_{L^2(Q)}^2
     \\
     &\lesssim \sum_{Q\in\mathcal{D}_k}\sum_{l\geq 1}\Big(\frac{a^n}{s^{2n+1}}+\frac{a^{n-2}}{s^{2n-1}}\Big)e^{-c\frac{2^{2l}a^2}{s^2}}\Vert \phi^s-(\phi^s)_{2Q}\Vert_{L^1(2^{l+1}Q\setminus 2^lQ)}^2
     \\
     &\lesssim \sum_{Q\in\mathcal{D}_k}\sum_{l\geq 1}\frac{a^{n-2}}{s^{2n-1}}e^{-c\frac{2^{2l}a^2}{s^2}}\Big(\Vert \phi^s-(\phi^s)_{2^{l+1}Q}\Vert_{L^1(2^{l+1}Q)}^2
     \\
     &\qquad\qquad + \sum_{m=1}^{l} \Vert (\phi^s)_{2^{m+1}Q}-(\phi^s)_{2^{m}Q}\Vert_{L^1(2^{l+1}Q)}^2 \Big)
     \\
     &\lesssim \sum_{Q\in\mathcal{D}_k}\sum_{l\geq 1}\frac{a^{n-2}}{s^{2n-1}}e^{-c\frac{2^{2l}a^2}{s^2}}\Big(2^{2(n+1)(l+1)}a^{2(n+1)}\inf_{x\in Q}M[\nabla\phi^s]^2(x)
     \\
     &\qquad\qquad+ \sum_{m=1}^{l} 2^{2m+2n(l+1)}a^{2(n+1)}\inf_{x\in Q}M[\nabla\phi^s]^2(x) \Big)
     \\
     &\lesssim \sum_{Q\in\mathcal{D}_k}\sum_{l\geq 1}\frac{(l+1)2^{2(n+1)(l+1)}a^{2(n-1)}}{s^{2n-1}}e^{-c\frac{2^{2l}a^2}{s^{2}}}\int_{Q}M[\nabla\phi^s]^2(x)dx
     \\
     &\lesssim \sum_{l\geq 1}\frac{(l+1)2^{2(n+1)(l+1)}a^{2(n-1)}}{s^{2n-1}}e^{-c\frac{2^{2l}a^2}{s^2}}\Vert M[\nabla\phi^s]\Vert_{L^2(\mathbb{R}^n)}^2.
\end{align*}
We can bound \((l+1)\leq 2^l\) and hence consider the sum over \(l\) as Riemann sum of the integral
\[\frac{a^{2(n-1)}}{s^{2n-1}}\int_1^\infty y^{2(n+1)}e^{-y^2\frac{a^2}{s^2}}dy=\frac{s^4}{a^3}\int_{a/s}^\infty z^{2(n+1)}e^{-z}dz.\]
Since \(n\) is even, \(\int_{a/s}^\infty z^{2(n+1)}e^{-z^2}dz=P(a/s)e^{-\frac{a^2}{s^2}}\) where \(P\) is a polynomial of degree \(2n+1\). Hence
\[\frac{s^4}{a^3}\int_{a/s}^\infty z^{2(n+1)}e^{-z}dz\lesssim P(a/s)\frac{s^4}{a^3}.\]
Thus, we obtain that
\begin{align*}
    \int_a^\infty \frac{1}{s^2}K ds\lesssim \int_a^\infty \frac{s^2}{a^3}P(a/s)\Vert\nabla\phi^s\Vert_{L^2}^2 ds\lesssim |\Delta|,
\end{align*}
where the integral is bounded independently of the choice of \(a\), whence the same bound remains valid when \(a\) tends to \(0\).
\medskip

For the first term \(J\), we abbreviate notation by setting \(f_{s,Q}:=\chi_{2Q}(\phi^s-(\phi^s)_{2Q})\) and continue with
\begin{align*}
J&\lesssim \sum_{Q\in\mathcal{D}_k}s\Vert\nabla e^{-s^2 L^s}f_{s,Q}\Vert_{L^2(Q)}^2=\sum_{Q\in\mathcal{D}_k}\int_{\mathbb{R}^n}\AP \nabla e^{-s^2 L^{s}}f_{s,Q}  \cdot \nabla e^{-s^2 L^{s}}f_{s,Q} s dx 
\\
&=\sum_{Q\in\mathcal{D}_k}\int_{\mathbb{R}^n}L^{s}e^{-s^2 L^{s}}f_{s,Q}  \cdot e^{-s^2 L^{s}}f_{s,Q} s dx 
\\
&=\sum_{Q\in\mathcal{D}_k}\Big[\int_{\mathbb{R}^n}\partial_s\big(e^{-s^2 L^{s}}f_{s,Q}\big)  \cdot e^{-s^2 L^{s}}f_{s,Q} s dx
\\
&\qquad -\int_{\mathbb{R}^n}\Big(\int_0^s2\tau e^{-(s^2-\tau^2) L^{s}}\mathrm{div}(\partial_s\AP\nabla e^{-\tau^2L^s}f_{s,Q})d\tau\Big) e^{-s^2 L^s}f_{s,Q} dx
\\
&\qquad -\int_{\mathbb{R}^n}e^{-s^2L^s}(\partial_s f_{s,Q})e^{-s^2L^2}f_{s,Q}dx\Big]
\\
&=\sum_{Q\in\mathcal{D}_k} J^{s,Q}_1+J^{s,Q}_2+J^{s,Q}_3.
\end{align*}
Here we used that \(\partial_s\big(e^{-s^2 L^{s}}f_{s,Q}\big)\) can be computed like \(\partial_s\rho=\partial_s\big(e^{-s^2 L^{s}}\phi_s\big)\) in Section \ref{section:rho}, since both are the same operator applied on a different on \(s\) depending function. The term on the left hand side on the penultimate equality corresponds to \(\int_{\mathbb{R}^n} w_t(x,s)\rho(x,s)\), while \(J_2\) corresponds to \(\int_{\mathbb{R}^n}w_s^{(2)}(x,s) \rho(x,s) dx\) and \(J_3\) corresponds to \(\int_{\mathbb{R}^n}w_s^{(1)}(x,s) \rho(x,s) dx\).
\medskip

First, we can observe for \(J_3^{s,Q}\) that
\begin{align*}
    \sum_{Q\in\mathcal{D}_k}J_3^{s,Q}&\lesssim \sum_{Q\in\mathcal{D}_k}\Vert e^{-s^2L^2}(\partial_sf_{s,Q})\Vert_{L^2}\Vert e^{-s^2L^2}f_{s,Q}\Vert_{L^2}
    \\
    &\lesssim \sum_{Q\in\mathcal{D}_k} \Vert \partial_s\phi^s-(\partial_s\phi^s)_{2Q}\Vert_{L^2(2Q)}\Vert \phi^s-(\phi^s)_{2Q}\Vert_{L^2(2Q)}
    \\
    &\lesssim s^2\Vert \nabla \partial_s\phi^s\Vert_{L^2}\Vert \nabla \phi^s\Vert_{L^2}\lesssim s^2\Vert \partial_s\AP(\cdot,s)\Vert_{\infty}|\Delta|.
\end{align*}
Here we used \refprop{prop:L2NormBoundsOfHeatSemigroup}, Poincar\'{e}'s inequality and \eqref{equation:L2ofpartial_snablaphiBounded}.
\medskip

Furthermore, we have for \(J_2^{s,Q}\) with Minkowski's inequality, \refprop{prop:L2NormBoundsOfHeatSemigroup} and Poincar\'{e}'s inequality
\begin{align*}
    \sum_{Q\in\mathcal{D}_k} J^{s,Q}_2&\lesssim \sum_{Q\in\mathcal{D}_k}\int_0^s2\tau \Vert e^{-(s^2-\tau^2) L^{s}}\mathrm{div}(\partial_s\AP\nabla e^{-\tau^2L^s}f_{s,Q})\Vert_{L^2} \Vert e^{-s^2 L^s}f_{s,Q}\Vert_{L^2} d\tau
    \\
    &\lesssim \sum_{Q\in\mathcal{D}_k}\Vert \partial_s\AP(\cdot, s)\Vert_\infty\Vert \phi^s-(\phi^s)_{2Q}\Vert_{L^2(2Q)}^2\int_0^s2\frac{1}{\sqrt{s^2-\tau^2}}  d\tau
    \\
    &\lesssim s^2\sum_{Q\in\mathcal{D}_k}\Vert \partial_s\AP(\cdot, s)\Vert_\infty\Vert \nabla\phi^s\Vert_{L^2(2Q)}^2
    \\
    &\lesssim s^2\Vert \partial_s\AP(\cdot,s)\Vert_{\infty}|\Delta|.
\end{align*}
Hence
\begin{align*}
    \int_a^\infty \frac{1}{s^2}\sum_k (J_2^{s,Q} + J_3^{s,Q})\lesssim \int_a^\infty \Vert \partial_s\AP(\cdot,s)\Vert_{\infty}|\Delta| ds\lesssim |\Delta|,
\end{align*}
where we used the \(L^1-L^\infty\) condition \eqref{cond:L1-Linfty}.
\medskip

Lastly, for \(J_1^{s,Q}\) we have
\begin{align*}
     \int_a^\infty \frac{1}{s^2}\sum_{Q\in\mathcal{D}_k} J_1^{s,Q}=-\sum_{Q\in\mathcal{D}_k}\int_a^\infty \frac{1}{s^2}\partial_s\big(\Vert e^{-s^2L^s}f_{s,Q}\Vert_{L^2(Q)}^2\big)ds.
\end{align*}
Since the integrand is nonnegative, we can again use \refprop{prop:L2NormBoundsOfHeatSemigroup} and Poincar\'{e}'s inequality to bound this above by
\begin{align*}
     \int_a^\infty \frac{1}{s^2}\sum_k J_1^{s,Q}&\lesssim -\frac{1}{a^2}\sum_{Q\in\mathcal{D}_k}\int_a^\infty \partial_s\big(\Vert e^{-s^2L^s}f_{s,Q}\Vert_{L^2(Q)}^2\big)ds 
     \\
     &\lesssim -\frac{1}{a^2}\sum_{Q\in\mathcal{D}_k}\Vert e^{-a^2L^a}f_{a,Q}\Vert_{L^2(Q)}^2
     \\
     &\lesssim -\frac{1}{a^2}\sum_{Q\in\mathcal{D}_k}\Vert \phi^a-(\phi^a)_{2Q}\Vert_{L^2(2Q)}^2\lesssim \Vert \nabla\phi^a\Vert_{L^2(\mathbb{R}^n)}^2\lesssim |\Delta|.
\end{align*}
Since these upper bounds are all independent of \(a\), if we take the limit when \(a\) tends to \(0\), we obtain that \(\int_0^\infty I ds\lesssim |\Delta|\).
\medskip

For the second term \(II\), we have similar to before in \(J\)
\begin{align*}
&II=s\Vert\nabla e^{-s^2 L^s}(sL^s\phi^s)\Vert_{L^2(\mathbb{R}^n)}^2
\\
&\qquad=\int_{\mathbb{R}^n}L^{s}e^{-s^2 L^{s}}(sL^s\phi^s)  \cdot e^{-s^2 L^{s}}(sL^s\phi^s) t dx
\\
&\qquad=\int_{\mathbb{R}^n}\partial_s(sL^se^{-s^2 L^{t}}\phi^s) \cdot sL^se^{-t^2 L^t}\phi^s dx
\\
&\qquad=\partial_s\int_{\mathbb{R}^n}(sL^se^{-s^2 L^{s}}\phi^s)^2 dx -\int_{\mathbb{R}^n}\partial_s (sL^se^{-t^2L^s}\phi^s)|_{t=s} sL^se^{-s^2 L^s}\phi^s dx
\\
&\qquad=\partial_s\int_{\mathbb{R}^n}(sL^se^{-s^2 L^{s}}\phi^s)^2 dx -\int_{\mathbb{R}^n}s\mathrm{div}(\partial_s\AP\nabla e^{-s^2L^s}\phi^s)\cdot sL^se^{-s^2 L^s}\phi^s dx
\\
&\qquad\qquad-\int_{\mathbb{R}^n}L^s e^{-s^2L^s}\phi^s\cdot sL^se^{-s^2 L^s}\phi^s dx
\\
&\qquad\qquad- \int_{\mathbb{R}^n}sL^s\Big(\int_0^s2\tau e^{-(s^2-\tau^2) L^{s}}\mathrm{div}(\partial_s\AP\nabla e^{-\tau^2L^a}\phi^s)d\tau\Big) sL^se^{-s^2 L^s}\phi^s dx
\\
&\qquad=:II_1+II_2+II_3+II_4.
\end{align*}
First, we note that \(II_3\) can be hidden on the left hand side since the whole integral term \(II\) is multiplied by a small constant \(\sigma\). Furthermore we obtain by integration by parts and \eqref{lemma:Nablaw_s^2SpatialBound} for \(II_4\)
\begin{align*}
    II_4&=\int_{\mathbb{R}^n}sL^sw_s^{(2)}(x) \cdot sL^se^{-s^2 L^s}\phi^s(x) dx
    \\
    &=-\int_{\mathbb{R}^n}s\AP(x,s)\nabla_{||}w_s^{(2)}(x) \cdot \nabla_{||}sL^s e^{-s^2 L^s}\phi^s(x) dx
    \\
    &=s\Vert\nabla w_s^{(2)}\Vert_{L^2}^2 +  \sigma s\Vert\nabla L^se^{-s^2 L^s}\phi^s\Vert_{L^2}^2
    \\
    &=s\Vert\partial_s \AP\Vert_\infty^2\Vert \nabla \phi^s\Vert_{L^2}^2 +  \sigma s\Vert\nabla L^se^{-s^2 L^s}\phi^s\Vert_{L^2}^2.
\end{align*}

We can hide the last term on the left hand side. Next, we have by integration by parts the same bound
\begin{align*}
    II_2&=\int_{\mathbb{R}^n}s\partial_s\AP(x,s)\nabla e^{-s^2L^s}\phi^s(x)\cdot s\nabla_{||}L^se^{-s^2 L^s}\phi^s(x) dx
    \\
    & \lesssim s\Vert\partial_s \AP\Vert_\infty^2\Vert \nabla \phi^s\Vert_{L^2}^2 +  \sigma s\Vert\nabla L^se^{-s^2 L^s}\phi^s\Vert_{L^2}^2.
\end{align*}
Together, we obtain
\begin{align*}
\int_a^\infty IIds&=\int_a^\infty II_1 dx +s\Vert\partial_s \AP\Vert_\infty^2\Vert \nabla \phi^s\Vert_{L^2}^2ds
\\
&=\int_a^\infty\partial_s\Big(\int_{\mathbb{R}^n}(sL^se^{-s^2 L^{s}}\phi^s)^2 dx\Big) +s\Vert\partial_s \AP\Vert_\infty^2\Vert \nabla \phi^s\Vert_{L^2}^2ds
\\
&= \Vert aL^ae^{-a^2 L^{a}}\phi^a\Vert_{L^2(\mathbb{R}^n)}^2 +|\Delta|\int_a^\infty\Vert\partial_s \AP\Vert_\infty^2sds
\\
&= \Vert \nabla\phi^a\Vert_{L^2(\mathbb{R}^n)}^2 +|\Delta|\int_a^\infty\Vert\partial_s \AP\Vert_\infty^2sds\lesssim |\Delta|.
\end{align*}
Here we used \refprop{Proposition11} and \eqref{condition:L^2Carlesontypecond}.


\color{black}

\hfill\\
The proofs of \eqref{lemma:w_tSqFctBound2} and \eqref{lemma:w_tSqFctBound3} rely on \eqref{lemma:w_tSqFctBound1} and \refprop{prop:L2NormBoundsOfHeatSemigroup}. For \eqref{lemma:w_tSqFctBound2} we see that
\begin{align*}
    &\Big\Vert\Big(\int_0^\infty\frac{|s^2\nabla L^s e^{-\eta^2s^2L^s}\phi^s|^2}{s}ds\Big)^{1/2}\Big\Vert_{L^2(\mathbb{R}^n)}^2
    =\int_{\mathbb{R}^{n+1}_+}\frac{|s^2\nabla L^s e^{-\eta^2s^2L^s}\phi^s|^2}{s} dxds
    \\
    &=\int_{\mathbb{R}^{n+1}_+}\frac{|s\nabla e^{-\frac{\eta^2s^2}{2}L^s}(sL^s e^{-\frac{\eta^2s^2}{2}L^s}\phi^s)|^2}{s} dxds
    \lesssim \int_{\mathbb{R}^{n+1}_+}\frac{|s L^s e^{-\frac{\eta^2s^2}{2}L^s}\phi^s|^2}{s} dxds,
\end{align*}
where the last integral is bounded by \eqref{lemma:w_tSqFctBound1}. Similarly for \eqref{lemma:w_tSqFctBound3}, we have
\begin{align*}
    &\Big\Vert\Big(\int_0^\infty\frac{|s^2L^s L^s e^{-\eta^2s^2L^s}\phi^s|^2}{s}ds\Big)^{1/2}\Big\Vert_{L^2(\mathbb{R}^n)}^2
    =\int_{\mathbb{R}^{n+1}_+}\frac{|s^2 L^s L^s e^{-\eta^2s^2L^s}\phi^s|^2}{s} dxds
    \\
    &=\int_{\mathbb{R}^{n+1}_+}\frac{|s L^s e^{-\frac{\eta^2s^2}{2}L^s}(sL^s e^{-\frac{\eta^2s^2}{2}L^s}\phi^s)|^2}{s} dxds
    \lesssim \int_0^\infty\int_{\mathbb{R}^n}\frac{|s L^s e^{-\frac{\eta^2s^2}{2}L^s}\phi^s|^2}{s} dxds.
\end{align*}

\end{proof}

In contrast to \(\partial_s\phi^s-w_s^{(1)}\) and \(w_s^{(2)}\) we have a certain local Harnack-type property for \(w_t\). First we have 

\begin{prop}[Proposition 6 in \cite{hofmann_dirichlet_2022}]\label{prop:Proposition6}
    Let \(L_{||}\) be a \(t\)-independent operator, \(Q\subset\mathbb{R}^n\) be a cube with side length \(l(Q)=R_0\) and \(\hat{Q}:=(1+\varepsilon)Q\) be an enlarged cube for some fixed \(\frac{1}{2}>\varepsilon>0\). Then
    \[\sup_{Q\times (R_0,2R_0]}|\partial_t e^{-t^2L_{||}}f(x)|^2\lesssim \fint_{\hat{Q}}\int_{(1-\varepsilon)R_0}^{2(1+\varepsilon)R_0}\frac{|\partial_t e^{-t^2L_{||}}f(x)|^2}{t}dxdt.\]
\end{prop}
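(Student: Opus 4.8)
The plan is to reduce the estimate to the classical parabolic local boundedness estimate of De~Giorgi--Nash--Moser via the change of variables $\tau=t^2$. Set $u(x,\tau):=e^{-\tau L}f(x)$, so that $\partial_t e^{-t^2L}f(x)=2t\,(\partial_\tau u)(x,t^2)$, and write $g(x,\tau):=\partial_\tau u(x,\tau)$. Since $L$ is $t$-independent, $u$ is a weak solution of the uniformly parabolic divergence-form equation $\partial_\tau u-\operatorname{div}(A\nabla u)=0$ on $\mathbb R^n\times(0,\infty)$. The analyticity and smoothing of the semigroup $\{e^{-\tau L}\}_{\tau>0}$ (cf.\ \refprop{prop:L2NormBoundsOfHeatSemigroup}) give $u(\cdot,\tau)\in\operatorname{dom}(L^k)$ for every $k$ with bounds locally uniform in $\tau>0$, so one may differentiate the equation in $\tau$ — which is legitimate precisely because $A$ does not depend on $\tau$ — and conclude that $g=\partial_\tau u=\operatorname{div}(A\nabla u)$ is again a weak solution of the same equation, hence a continuous caloric function on $\mathbb R^n\times(0,\infty)$.

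Next I would fix $\delta=\delta(\varepsilon)\in(0,1)$ small and apply, for every point $(x_0,\tau_0)\in Q\times(R_0^2,4R_0^2]$, the parabolic local boundedness estimate on the nested parabolic cylinders $\mathcal C:=B(x_0,\delta R_0)\times(\tau_0-(\delta R_0)^2,\tau_0]$ and $2\mathcal C:=B(x_0,2\delta R_0)\times(\tau_0-(2\delta R_0)^2,\tau_0]$. Choosing $\delta$ small enough in terms of $\varepsilon$ one checks $2\mathcal C\subset\hat Q\times\big((1-\varepsilon)^2R_0^2,\,4(1+\varepsilon)^2R_0^2\big)$: spatially because $2\delta R_0$ is below the distance from $Q$ to $\partial\hat Q$, and in time because $\tau_0-(2\delta R_0)^2\ge R_0^2-4\delta^2R_0^2>(1-\varepsilon)^2R_0^2$ while $\tau_0\le 4R_0^2<4(1+\varepsilon)^2R_0^2$. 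The Moser estimate then yields
\[\sup_{\tfrac12\mathcal C}|g|^2\lesssim \frac{1}{|2\mathcal C|}\iint_{2\mathcal C}|g|^2\,dx\,d\tau\lesssim_\varepsilon \frac{1}{R_0^{n+2}}\iint_{\hat Q\times((1-\varepsilon)^2R_0^2,\,4(1+\varepsilon)^2R_0^2)}|g|^2\,dx\,d\tau,\]
and since $(x_0,\tau_0)\in\tfrac12\mathcal C$ while the right-hand side no longer depends on the chosen point, taking the supremum over all such $(x_0,\tau_0)$ bounds $\sup_{Q\times(R_0^2,4R_0^2]}|g|^2$ by the same right-hand side.

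Finally I would undo the substitution. On the one hand, for $(x,t)\in Q\times(R_0,2R_0]$ we have $|\partial_t e^{-t^2L}f(x)|^2=4t^2|g(x,t^2)|^2\le 16R_0^2\,|g(x,t^2)|^2$, whence $\sup_{Q\times(R_0,2R_0]}|\partial_t e^{-t^2L}f|^2\le 16R_0^2\sup_{Q\times(R_0^2,4R_0^2]}|g|^2$. On the other hand, with $\tau=t^2$, $d\tau=2t\,dt$ and $|g(x,t^2)|^2=|\partial_t e^{-t^2L}f(x)|^2/(4t^2)$, the cylinder $\hat Q\times((1-\varepsilon)^2R_0^2,4(1+\varepsilon)^2R_0^2)$ corresponds to $\hat Q\times((1-\varepsilon)R_0,2(1+\varepsilon)R_0)$ and
\[\iint_{\hat Q\times((1-\varepsilon)^2R_0^2,4(1+\varepsilon)^2R_0^2)}|g|^2\,dx\,d\tau=\tfrac12\int_{\hat Q}\int_{(1-\varepsilon)R_0}^{2(1+\varepsilon)R_0}\frac{|\partial_t e^{-t^2L}f(x)|^2}{t}\,dt\,dx.\]
Combining the three displays, the powers $R_0^{\pm2}$ cancel, the factor $R_0^{-n}$ turns into $\fint_{\hat Q}$ up to a constant depending only on $n$ and $\varepsilon$, and one recovers exactly the asserted inequality.

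I expect the genuine obstacle to be the qualitative step in the first paragraph: verifying that $\partial_\tau u$ really is a weak solution of the parabolic equation on the relevant cylinder, so that the De~Giorgi--Nash--Moser machinery applies — this hinges on the smoothing of the semigroup and, crucially, on the $t$-independence of $A$, which is what lets $\partial_\tau$ commute with $\operatorname{div}(A\nabla\cdot)$. Once that is secured, everything else is bookkeeping of the parabolic scaling through $\tau=t^2$. If one wants a self-contained argument, the parabolic local boundedness estimate used above can itself be obtained by the standard Caccioppoli inequality plus Moser iteration on the cylinders $2\mathcal C\supset\tfrac12\mathcal C$, but I would simply invoke it.
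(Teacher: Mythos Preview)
Your argument is correct. The paper does not give its own proof of this proposition; it is quoted verbatim as Proposition~6 of \cite{hofmann_dirichlet_2022} and used as a black box, so there is nothing to compare against here beyond noting that your route---pass to $\tau=t^2$, observe that $g=\partial_\tau u$ is again a weak solution of $\partial_\tau g=\Div(A\nabla g)$ by $t$-independence of $A$ and analyticity of the semigroup, apply parabolic De~Giorgi--Nash--Moser local boundedness on suitably scaled cylinders, and undo the substitution---is exactly the natural proof and almost certainly the one in the cited reference. The bookkeeping you carry out (choice of $\delta=\delta(\varepsilon)$, cancellation of the $R_0^{\pm2}$ factors, conversion of $R_0^{-n}$ into the average over $\hat Q$) is accurate. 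Your own caveat about the qualitative step is well placed but not an obstacle: the semigroup bounds in \refprop{prop:L2NormBoundsOfHeatSemigroup} guarantee $\partial_\tau u,\nabla\partial_\tau u\in L^2_{\Loc}(\mathbb{R}^n\times(0,\infty))$, which is all Moser's estimate requires.
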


We would like to have a similar result for \(w_t(x,s)\) whihc involves our family of operators \(L^s\). We are able to prove a weaker version which will still be enough for the proof of \refthm{thm:MainTheorem}.

\begin{lemma}\label{lemma:localHarnackTypeInequalityForw_t}
    Fix a boundary cube \(\Delta\) and let \(k_0\) be the scale of boundary cubes such that \(l(\Delta)\leq 2^{-k_0}\leq 3l(\Delta)\). Then it holds that
    \begin{align*}
    &\sum_{k\geq k_0}\sum_{Q\in \mathcal{D}^\eta_k(\Delta)}|Q|\sup_{(x,s)\in Q\times (2^{-k},2^{-k+1}]}|w_t(x,s)|^2
    \\
    &\lesssim \sum_{k\geq k_0}\sum_{Q\in \mathcal{D}_k^\eta(\Delta)}\int_{2Q}\int_{2^{-k-1}}^{2^{-k+2}}\frac{|w_t(x,s)|^2}{s}dxds + |\Delta|\int_0^{3l(\Delta)}\Vert\partial_s A(\cdot,s)\Vert_\infty^2 s ds.
    \end{align*}
\end{lemma}

\begin{proof}
Recall that the boundary cubes in \(\mathcal{D}_k^\eta\) have length comparable to \(\eta 2^{-k}\). Since \(w_t(x,s)=\partial_te^{-\eta^2t^2L^s}\phi^s|_{t= s}\) is also recalled by \(\eta\) in \(t\) the Whitney cube \(Q\times (2^{-k},2^{-k+1})\) for \(w_t\) corresponds to a usual cube for the operator \(\partial_te^{-t^2L^s}\) like in \refprop{prop:Proposition6}. All implicit constants that follow might depend on \(\eta\). Now, for every fixed operator \(L^s\) with \(s\in [2^{-k},2^{-k+1}]\), we get by \refprop{prop:Proposition6}

\begin{align*}
    \sup_{(x,s)\in Q\times [2^{-k},2^{-k+1}]}|w_t(x,s)|&\lesssim \sup_{s\in [2^-k,2^{-k+1}]}\fint_{5/4Q}\int_{2^{-k-1}}^{2^{-k+2}}\frac{|\partial_t w(x,\eta^2t^2;s)|^2}{t}dxdt
    \\
    &\lesssim \fint_{5/4Q}\int_{2^{-k-1}}^{2^{-k+2}}\frac{|\partial_t w(x,\eta^2 t^2;s^*)|^2}{t}dxdt,
\end{align*}    
where \(s^*\) is a value of \(s\) where half of the supremum is attained.
\smallskip

We would like to apply \reflemma{lemma:LEMMA5} again, and we will use the same notation for the Whitney cubes and their enlargements as previously in the proof of \reflemma{lemma:SqFctBoundsForw_s^2}. Let \(5/4<\EPS_1<\EPS_2<\frac{3}{2}\) with \(d:=\frac{\EPS_2-\EPS_1}{2}\). We define the enlargements \(5/4Q\subset Q_1\subset\tilde{Q}\subset Q_2\subset 3/2Q=:\hat{Q}\) of the boundary cube \(Q\) by
\begin{align*}
    Q_1:=\EPS_1Q,\qquad\tilde{Q}:=(\EPS_1+d)Q, \qquad \textrm{and }Q_2:=\EPS_2Q.
\end{align*}
Let us also define the corresponding enlargements of the Whitney cubes in \(\Omega\) satisfying \(l(Q)\approx \eta 2^{-k}\) and
\[5/4Q\times [2^{-k-1},2^{-k+2}]\subset Q_1^*\subset\tilde{Q}^*\subset Q_2^*\subset 3/2 Q\times [2^{-k-2}, 2^{-k+3}]=\hat{Q}^*,\]
where
\begin{align*}
    &Q_1^*:=Q_1\times [\frac{1}{\EPS_1}2^{-k-1},\EPS_1 2^{-k+2}],
    \\
    &\tilde{Q}^*:= \tilde{Q}\times [\frac{1}{\EPS_1+d}2^{-k-1},(\EPS_1+d)2^{-k+2}], \quad \textrm{and }
    \\
    &Q_2^*:=Q_2\times [\frac{1}{\EPS_2}2^{-k-1},\EPS_2 2^{-k+2}].
\end{align*}
Now we can also introduce a smooth cut-off function \(\psi\) with \(\psi\equiv 1\) on \(Q_1^*\) and \(\psi\equiv 0\) on \(\Omega\setminus \tilde{Q}^*\) with \(|\nabla \psi(x,t)|\lesssim \frac{1}{ td}\).

Furthermore, let us calculate \(\partial_s\partial_t w(x,t^2,s)\) (like in Section \ref{section:rho})
\begin{align}
    \partial_s\partial_t w(x,t^2;s)&=\partial_s (2tL^sw(x,t^2;s))\nonumber
    \\
    &=2t\Div(\partial_s\AP(x,s)\nabla w(x,t^2;s))+2tL^s\partial_sw(x,t^2;s)\nonumber
    \\
    &=2t\Div(\partial_s\AP(x,s)\nabla w(x,t^2;s))\nonumber
    \\
    &\qquad + 2tL^sv_1(x,t^2;s)+2tL^sv_2(x,t^2;s).\label{eq:partialstofw}
\end{align}
Here we recall that the functions \(w,v_1\) and \(v_2\) can be found in Section \ref{section:rho}.
\medskip

We also note that \(w_t(x,t)=\partial_t w(x,\eta^2t^;,r)\big|_{r=t}\) and we can apply the Fundamental Theorem of Calculus in the variable \(s\) and obtain
\begin{align*}
    &\fint_{Q_1^*}\frac{|\partial_t w(x,\eta^2t^2;s^*)|^2}{t}dxdt
    \leq \fint_{\tilde{Q}^*}\frac{|\partial_t w(x,\eta^2 t^2;s^*)|^2}{t}\psi^2(x,t) dxdt
    \\
    &= \fint_{\tilde{Q}^*}\partial_t w(x,\eta^2t^2;s^*)\Big(w_t(x,t) + \int_t^{s^*}\partial_s\partial_t w(x,\eta^2t^2;s)ds\Big)\psi^2(x,t) dxdt
\end{align*}
By \eqref{eq:partialstofw} we can continue with
\begin{align*}
    &=\fint_{\tilde{Q}^*}\partial_t w(x,\eta^2t^2;s^*)w_t(x,t)\psi^2(x,t) dxdt
    \\
    &\qquad+2\eta^2\fint_{\tilde{Q}^*}\int_t^{s^*}\partial_t w(x,\eta^2t^2;s^*)t\Div(\partial_s\AP(x,s)\nabla w(x,\eta^2t^2;s))\psi^2(x,t) dsdxdt
    \\
    &\qquad +2\eta^2\fint_{\tilde{Q}^*}\int_t^{s^*}\partial_t w(x,\eta^2t^2;s^*)tL^sv_1(x,\eta^2t^2;s)\psi^2(x,t) dsdxdt
    \\
    &\qquad+2\eta^2\fint_{\tilde{Q}^*}\int_t^{s^*}\partial_t w(x,\eta^2t^2;s^*)tL^sv_2(x,\eta^2t^2;s)\psi^2(x,t) dsdxdt
    \\
    &=:I_1+I_2+I_3+I_4.
\end{align*}
First, let us consider \(I_1\). We have
\begin{align*}
    I_1&\lesssim \sigma \fint_{\tilde{Q}^*}|\partial_t w(x,\eta^2t^2;s^*)|^2\psi^2 dxdt +C_\sigma\frac{1}{|Q|}\int_{\tilde{Q}^*}\frac{|w_t(x,t)|^2}{t} dxdt,
\end{align*}
where the first term can be hidden on the left hand side for a small choice of the parameter \(\sigma\), and the second one remains.
\smallskip

For \(I_2\) we use integration by parts to get
\begin{align*}
    I_2&=\fint_{\tilde{Q}^*}\int_t^{s^*}\partial_t \nabla w(x,\eta^2t^2;s^*)\psi^2(x,t)\cdot\partial_s\AP(x,s)\nabla w(x,\eta^2t^2;s) tdsdxdt
    \\
    &\qquad+\fint_{\tilde{Q}^*}\int_t^{s^*} \nabla\psi^2(x,t) \partial_t w(x,\eta^2t^2;s^*)\cdot \partial_s\AP(x,s)\nabla w(x,\eta^2t^2;s) tdsdxdt
    \\
    &\lesssim\fint_{\tilde{Q}^*}\int_t^{s^*}|\partial_t \nabla w(x,\eta^2t^2;s^*)||\partial_s\AP(x,s)||\nabla w(x,\eta^2t^2;s)| tdsdxdt
    \\
    &\qquad+\fint_{\tilde{Q}^*}\int_t^{s^*} \frac{|\partial_t w(x,\eta^2t^2;s^*)|}{td}|\partial_s\AP(x,s)||\nabla w(x,\eta^2t^2;s)| tdsdxdt
    \\
    &\lesssim \int_{\frac{1}{\EPS_1+d}2^{-k-1}}^{(\EPS_1+d)2^{-k+2}}\Big(\fint_{\tilde{Q}^*}|\partial_t \nabla w(x,\eta^2t^2;s^*)|^2dxdt\Big)^\frac{1}{2}
    \\
    &\qquad\qquad\cdot\Vert \partial_s\AP(\cdot,s)\Vert_\infty \Big(\fint_{\tilde{Q}^*}|\nabla w(x,\eta^2t^2;s)|^2dxdt\Big)^\frac{1}{2}sds
    \\
    &\qquad +\int_{\frac{1}{\EPS_1+d}2^{-k-1}}^{(\EPS_1+d)2^{-k+2}}\Big(\fint_{\tilde{Q}^*}\frac{|\partial_t w(x,\eta^2t^2;s^*)|^2}{t^2d^2}dxdt\Big)^\frac{1}{2}
    \\
    &\qquad\qquad \cdot\Vert \partial_s\AP(\cdot,s)\Vert_\infty \Big(\fint_{\tilde{Q}^*}|\nabla w(x,\eta^2t^2;s)|^2dxdt\Big)^\frac{1}{2}sds
\end{align*}

Since we have a Caccioppoli inequality for \(\partial_t w(x,\eta^2t^2;s^*)\) (see \reflemma{remark:CaccForpartial_tw}) we obtain
\[\fint_{\tilde{Q}^*}|\partial_t \nabla w(x,\eta^2t^2;s^*)|^2dxdt\lesssim\fint_{Q_2^*}\frac{|\partial_t w(x,\eta^2t^2;s^*)|^2}{t^2d^2}dxdt,\]
which gives together for a small \(\sigma>0\) and Hölder's inequality
\begin{align*}I_2&\lesssim \sigma \frac{1}{|Q|}\int_{Q_2^*}\frac{|\partial_t w(x,\eta^2t^2;s^*)|^2}{t} dxdt
\\
&\qquad+\frac{1}{d^2} \int_{\frac{1}{\EPS_1+d}2^{-k-1}}^{(\EPS_1+d)2^{-k+2}}\Vert \partial_s\AP(\cdot,s)\Vert_\infty^2\Big(\fint_{\tilde{Q}^*}|\nabla w(x,\eta^2t^2;s)|^2dxdt\Big) s ds.\end{align*}

The integral \(I_3\) and \(I_4\) can be handled similarly. 
First for \(I_3\), we use integration by parts to obtain
\begin{align*}
    I_3&=\fint_{\tilde{Q}^*}\int_t^{s^*}\partial_t \nabla w(x,\eta^2t^2;s^*)\psi^2(x,t)\AP(x,s)\nabla v_1(x,\eta^2t^2;s) sdsdxdt
    \\
    &\qquad+\fint_{\tilde{Q}^*}\int_t^{s^*} \nabla\psi^2(x,t) \partial_t w(x,\eta^2t^2;s^*) \AP(x,s)\nabla v_1(x,\eta^2t^2;s) sdsdxdt
    \\
    &\lesssim\fint_{\tilde{Q}^*}\int_t^{s^*}|\partial_t \nabla w(x,\eta^2t^2;s^*)||\nabla v_1(x,\eta^2t^2;s)| sdsdxdt
    \\
    &\qquad+\fint_{\tilde{Q}^*}\int_t^{s^*} \frac{|\partial_t w(x,\eta^2t^2;s^*)|}{dt}|\nabla v_1(x,\eta^2t^2;s)| sdsdxdt
    \\
    &\lesssim \int_{\frac{1}{\EPS_1+d}2^{-k-1}}^{(\EPS_1+d)2^{-k+2}}\Big(\fint_{Q_2^*}\frac{|\partial_t w(x,\eta^2t^2;s^*)|^2}{d^2t^2}dxdt\Big)^\frac{1}{2} \Big(\fint_{\tilde{Q}^*}|\nabla v_1(x,\eta^2t^2;s)|^2dxdt\Big)^\frac{1}{2}sds.
\end{align*}

We arrive at
\[I_3\lesssim \sigma \frac{1}{|Q|}\int_{Q_2^*}\frac{|\partial_t w(x,\eta^2t^2;s^*)|^2}{t} dxdt + \frac{1}{d^2}\int_{\frac{1}{\EPS_1+d}2^{-k-1}}^{(\EPS_1+d)2^{-k+2}}\fint_{\tilde{Q}^*}|\nabla v_1(x,\eta^2t^2;s)|^2sdxdtds.\]

Completely analogously for \(I_4\), we arrive at
\[I_4\lesssim \sigma \frac{1}{|Q|}\int_{Q_2^*}\frac{|\partial_t w(x,\eta^2t^2;s^*)|^2}{t} dxdt + \frac{1}{d^2}\int_{\frac{1}{\EPS_1+d}2^{-k-1}}^{(\EPS_1+d)2^{-k+2}}\fint_{\tilde{Q}^*}|\nabla v_2(x,\eta^2t^2;s)|^2sdxdtds.\]

Thus, we can put all pieces together to get
\begin{align*}
    &\fint_{Q_1^*}\frac{|\partial_t w(x,\eta^2t^2;s^*)|^2}{t}dxdt    
    \\
    &\lesssim \sigma \fint_{Q_2^*}|\partial_t w(x,\eta^2t^2;s^*)|^2 dxdt +\frac{1}{|Q|}\int_{\bar{Q}^*}\frac{|w_t(x,t)|^2}{t}dxdt 
    \\
    &\qquad +\frac{1}{d^2} \int_{\frac{1}{\EPS_1+d}2^{-k-1}}^{(\EPS_1+d)2^{-k+2}}\Vert \partial_s A(\cdot,s)\Vert_\infty^2 \Big(\fint_{\tilde{Q}^*}|\nabla w(x,\eta^2t^2;s)|^2dxdt\Big) s ds
    \\
    &\qquad +\frac{1}{d^2} \int_{\frac{1}{\EPS_1+d}2^{-k-1}}^{(\EPS_1+d)2^{-k+2}}\fint_{\tilde{Q}^*}|\nabla v_1(x,\eta^2t^2;s)|^2sdxdtds 
    \\
    &\qquad + \frac{1}{d^2}\int_{\frac{1}{\EPS_1+d}2^{-k-1}}^{(\EPS_1+d)2^{-k+2}}\fint_{\tilde{Q}^*}|\nabla v_2(x,\eta^2t^2;s)|^2sdxdtds.
\end{align*}

Applying  \reflemma{lemma:LEMMA5} we get
\begin{align*}
    &\fint_{5/4Q}\int_{2^{-k-1}}^{2^{-k+2}}\frac{|\partial_t w(x,t^2;s^*)|^2}{t}dxdt
    \\
    &\lesssim \frac{1}{|Q|}\int_{\hat{Q}^*}\frac{|w_t(x,t)|^2}{t}dxdt +\int_{\frac{1}{\EPS_1+d}2^{-k-1}}^{(\EPS_1+d)2^{-k+2}}\Vert \partial_s A(\cdot,s)\Vert_\infty^2 \Big(\fint_{\tilde{Q}^*}|\nabla w(x,\eta^2t^2;s)|^2dxdt\Big) s ds
    \\
    &\qquad + \int_{2^{-k-2}}^{2^{-k+3}}\fint_{\hat{Q}^*}|\nabla v_1(x,\eta^2t^2;s)|^2sdxdtds 
    \\
    &\qquad + \int_{2^{-k-2}}^{2^{-k+3}}\fint_{\hat{Q}^*}|\nabla v_2(x,\eta^2t^2;s)|^2sdxdtds
    \\
    &=:\frac{1}{|Q|}\int_{\hat{Q}^*}\frac{|w_t(x,t)|^2}{t}dxdt+J_1^Q+J_2^Q+J_3^Q.
\end{align*}

First, if we fix a scale \(k\) of the Whitney cubes and sum over all cubes in \(\mathcal{D}^\eta_k\), we obtain by \reflemma{lemma:nablaSemigroupBoundedByNablaf} and \eqref{equation:L2ofnablaphiBounded}
\begin{align*}
    &\sum_{Q\in\mathcal{D}^\eta_k(\Delta)}|Q|J_1^Q=\sum_{Q\in\mathcal{D}_k^\eta(\Delta)}|Q|\int_{\frac{1}{\EPS_1+d}2^{-k-1}}^{(\EPS_1+d)2^{-k+2}}\Vert \partial_s A(\cdot,s)\Vert_\infty^2 \Big(\fint_{\tilde{Q}^*}|\nabla w(x,\eta^2t^2;s)|^2dxdt\Big) s ds
    \\
    &\qquad\lesssim\int_{\frac{1}{\EPS_1+d}2^{-k-1}}^{(\EPS_1+d)2^{-k+2}}\Vert \partial_s A(\cdot,s)\Vert_\infty^2 \Big(\fint_{\frac{1}{\EPS_1+d}2^{-k-1},(\EPS_1+d)2^{-k+2}}\int_{3\Delta}|\nabla w(x,\eta^2t^2;s)|^2dxdt\Big) s ds
    \\
    &\qquad\lesssim \int_{\frac{1}{\EPS_1+d}2^{-k-1}}^{(\EPS_1+d)2^{-k+2}}\Vert \partial_s A(\cdot,s)\Vert_\infty^2 |\Delta| s ds.
\end{align*}

For \(J_2^Q\) we obtain also with \reflemma{lemma:nablaSemigroupBoundedByNablaf} and \eqref{equation:L2ofpartial_snablaphiBounded}
\begin{align*}
    \sum_{Q\in\mathcal{D}^\eta_k(\Delta)}|Q|J_2^Q=&\sum_{Q\in\mathcal{D}^\eta_k(\Delta)}|Q|\int_{\frac{1}{\EPS_1+d}2^{-k-1}}^{(\EPS_1+d)2^{-k+2}}\fint_{\tilde{Q}^*}|\nabla v_1(x,\eta^2t^2;s)|^2sdxdtds
    \\
    &\lesssim\int_{\frac{1}{\EPS_1+d}2^{-k-1}}^{(\EPS_1+d)2^{-k+2}}\fint_{\frac{1}{\EPS_1+d}2^{-k-1},(\EPS_1+d)2^{-k+2}}\int_{3\Delta}|\nabla v_1(x,\eta^2t^2;s)|^2sdxdtds
    \\
    &\lesssim \int_{\frac{1}{\EPS_1+d}2^{-k-1}}^{(\EPS_1+d)2^{-k+2}}\Vert \nabla\partial_s\phi^s\Vert^2 s ds
    \\
    &\lesssim \int_{\frac{1}{\EPS_1+d}2^{-k-1}}^{(\EPS_1+d)2^{-k+2}}\Vert \partial_s A(\cdot,s)\Vert_\infty^2 |\Delta| s ds.
\end{align*}

Lastly, if we use the Caccioppoli type inequality in \reflemma{Lemma:CacciopolliTypeInequalities} we can estimate 
\begin{align}
    \sum_{Q\in\mathcal{D}^\eta_k(\Delta)}|Q|J_3^Q&=\sum_{Q\in \mathcal{D}_k^\eta(\Delta)}|Q|\int_{2^{-k-2}}^{2^{-k+3}}\fint_{\hat{Q}^*}|\nabla v_2(x,\eta^2t^2;s)|^2 sdxdtds\nonumber
    \\
    &\lesssim \sum_{Q\in \mathcal{D}_k^\eta(\Delta)}|Q|\int_{2^{-k-2}}^{2^{-k+3}}\Big(\frac{1}{s}\fint_{2Q}\fint_{2^{-k-3}}^{2^{-k+4}}|v_2(x,\eta^2t^2;s)|^2dxdt \nonumber
    \\
    &\qquad\qquad + \Vert\partial_s A(\cdot,s)\Vert_{L^\infty(3\Delta)}^2s\fint_{2Q}\fint_{2^{-k-2}}^{2^{-k+3}}|\nabla e^{-\eta^2r^2L^s}\phi_s|^2drdx  \nonumber
    \\
    &\qquad\qquad + \Vert\partial_s A(\cdot,s)\Vert_{L^\infty(3\Delta)}^2s^3\fint_{2Q}\fint_{2^{-k-2}}^{2^{-k+3}}|\nabla \partial_r e^{-\eta^2r^2L^s}\phi_s|^2drdx  \Big)ds\nonumber
    \\
    &\lesssim \int_{2^{-k-2}}^{2^{-k+3}}\Big(\fint_{2^{-k-3}}^{2^{-k+4}}\frac{1}{s}\int_{\mathbb{R}^n} |v_2(x,\eta^2t^2;s)|^2 dx dt \Big) + \Vert\partial_s A(\cdot,s)\Vert_{L^\infty(3\Delta)}^2s \nonumber
    \\
    &\qquad\cdot \Big(\fint_{2^{-k-3}}^{2^{-k+4}}\int_{\mathbb{R}^n}|\nabla e^{-\eta^2r^2L^s}\phi_s|^2 + s^2|\nabla \partial_r e^{-\eta^2r^2L^s}\phi_s|^2 drdx\Big) ds.\label{eq:EquationWithv2}
\end{align}
Using \reflemma{lemma:nablaSemigroupBoundedByNablaf} and \eqref{equation:L2ofnablaphiBounded} we observe
\[\int_{3\Delta}|\nabla e^{-\eta^2r^2L^s}\phi_s|^2 + s^2|\nabla \partial_r e^{-\eta^2r^2L^s}\phi_s|^2dx\lesssim |\Delta|.\]
Further, we note that the proof of \eqref{lemma:w_s^2spatialL^2Bound} in \reflemma{lemma:w_s^2/nablaw_s^2spatialL^2Bound} works completely analogously for \(v_2(x,\eta^2t^2;s)\) instead of just \(w_s^{(2)}(x,s)=v_2(x,\eta^2s^2;s)\), which yields that
\[\int_{\mathbb{R}^n}|v_2(x,\eta^2t^2;s)|^2dx\lesssim \Vert\partial_s\AP(\cdot,s)\Vert_{L^\infty}^2|\Delta|s^2.\]
As a consequence, we can bound \eqref{eq:EquationWithv2} by
\[\int_{2^{-k-2}}^{2^{-k+3}}\Vert\partial_s A(\cdot,s)\Vert_{L^\infty(3\Delta)}^2s ds.\]

Similarly, we can see with \reflemma{lemma:nablaSemigroupBoundedByNablaf} and \eqref{equation:L2ofpartial_snablaphiBounded} that
\[\Vert \nabla v_2(\cdot,\eta^2t^2;s)\Vert_{L^2(\mathbb{R}^n)}\lesssim \Vert\nabla\partial_s\phi^s\Vert_{L^2(3\Delta)}\lesssim \Vert \partial_s \AP(\cdot,s)\Vert_{L^\infty(3\Delta)}|\Delta|^{1/2},\]
and hence
\begin{align*}
    &\sum_{Q\in \mathcal{D}^\eta_k(\Delta)}|Q|\int_{2^{-k-2}}^{2^{-k+3}}\fint_{\hat{Q}^*}|\nabla v_2(x,\eta^2t^2;s)|^2 sdxdtds
    \\
    &\lesssim \int_{2^{-k-2}}^{2^{-k+3}}\fint_{2^{-k-3}}^{2^{-k+4}}\int_{\mathbb{R}^n}|\nabla v_2(x,\eta^2t^2;s)|^2 sdxdtds
    \\
    &\lesssim \int_{2^{-k-2}}^{2^{-k+3}}\Vert\partial_s A(\cdot,s)\Vert_{L^\infty(3\Delta)}^2 |\Delta| s ds.
\end{align*}

Thus,
\begin{align*}
    &\sum_{k\geq k_0}\sum_{Q\in \mathcal{D}_k^\eta(\Delta)}|Q|\sup_{(x,s)\in Q\times (2^{-k},2^{-k+1}]}|w_t(x,s)|
    \\
    &\qquad\lesssim \sum_{k\geq k_0}\sum_{Q\in \mathcal{D}_k^\eta(\Delta)}\int_{\hat{Q}^*}\frac{|w_t(x,s)|^2}{s}dxds + |\Delta|\int_0^{3l(\Delta)}\Vert\partial_s A(\cdot,s)\Vert_\infty^2 s ds.
\end{align*}

\end{proof}

\subsection{Proof of \reflemma{lemma:L^2estimatesForSquareFunctions}}

Finally, we can turn to the proof of \reflemma{lemma:L^2estimatesForSquareFunctions}.

\begin{proof}[Proof of \reflemma{lemma:L^2estimatesForSquareFunctions}]

\begin{enumerate}[(a)]
    \item We split
    \begin{align*}
    \int_{T(\Delta)}|\nabla \partial_s \rho_\eta(x,s)|^2 sdxds&\leq \int_{T(\Delta)}|\nabla w_t(x,s)|^2 sdxds+\int_{T(\Delta)}|\nabla w_s^{(1)}(x,s)|^2 sdxds
    \\
    &\qquad+\int_{T(\Delta)}|\nabla w_s^{(2)}(x,s)|^2 sdxds,
    \end{align*}
    and apply \reflemma{lemma:SqFctBoundsForw_t}, \reflemma{lemma:SqFctBoundsForpartial_sPhiAndw_s^1} and \reflemma{lemma:SqFctBoundsForw_s^2} to get the required bound.
    
    \item We have
    \[\int_{T(\Delta)}|L^s \rho_\eta(x,s)|^2s dxds=\int_{T(\Delta)}\frac{|w_t(x,s)|^2}{\eta^2s} dxds\lesssim |\Delta|\]
    by \reflemma{lemma:SqFctBoundsForw_t}.

    \item We have
    \begin{align*}
    &\int_{T(\Delta)}|\partial_s A(x,s)\nabla\rho_\eta(x,s)|^2s dxds
    \\
    &\leq \int_0^{l(\Delta)}\Vert\partial_s A(\cdot,s)\Vert_\infty\Big(\int_\Delta|\nabla\phi^s|^2+|\nabla e^{-\eta^2s^2L^s}\phi^s|^2dx\Big) s ds
    \\
    &\lesssim \int_0^{l(\Delta)}\Vert\partial_s A(\cdot,s)\Vert_\infty|\Delta|s ds \lesssim |\Delta|
    \end{align*}
    by \eqref{equation:L2ofnablaphiBounded} and \eqref{condition:L^2Carlesontypecond}.

    \item We split
    \begin{align*}
        \int_{T(\Delta)}\frac{|\partial_s \theta_\eta(x,s)|^2}{s}dxds&\leq \int_{T(\Delta)}\frac{|(\partial_s\phi^s-w_s^{(1)})(x,s)|^2}{s}dxds
        \\
        &\qquad +\int_{T(\Delta)}\frac{|w_s^{(2)}(x,s)|^2}{s}dxds +\int_{T(\Delta)}\frac{|w_t(x,s)|^2}{s}dxds
    \end{align*}
    and the bound follows from \reflemma{lemma:SqFctBoundsForpartial_sPhiAndw_s^1}, \reflemma{lemma:SqFctBoundsForw_s^2}, and \reflemma{lemma:SqFctBoundsForw_t}.

    \item The proof works analogously to the proof of Lemma 9 in \cite{hofmann_dirichlet_2022} and uses a weighted Hardy inequality
    \[\int_0^\infty\Big(\frac{1}{s}\int_0^s |F(t)|dt\Big)^{p}\frac{ds}{s}\lesssim_p \int_0^\infty |F(s)|^p\frac{ds}{s} \qquad\textrm{for }1<p<\infty,\]
    which is proved there. With that and for \(p=2\), we have 
    \begin{align*}
        \int_{\mathbb{R}^n}\int_0^\infty \frac{|\theta_\eta(x,s)|^2}{s^3}dsdx&\leq \int_{\mathbb{R}^n}\int_0^\infty \Big(\frac{1}{s}\int_0^s |\partial_t e^{-\eta^2t^2L^s}\phi^s|dt\Big)^2\frac{ds}{s}dx
        \\
        &\lesssim \int_{\mathbb{R}^n}\int_0^\infty |w_t(x,s)|^2\frac{ds}{s}dx\lesssim |\Delta|.
    \end{align*}
    Here we also used \reflemma{lemma:SqFctBoundsForw_t} in the last inequality.
\end{enumerate}
\end{proof}

\section{Proof of \refthm{thm:MainTheorem}}

Recall that to show \(\omega\in A_\infty(d\sigma)\) it is enough to show \eqref{eq:SqFctonSawtooth} and we fixed a solution \(u\in W^{1,2}(\Omega)\) to the Dirichlet problem on \(\Omega\) for boundary data \(f\in C_c(\Omega)\) with \(\Vert u\Vert_{L^\infty}\leq \Vert f\Vert_{L^\infty}\leq 1\). Recall also, that for the fixed boundary ball \(\Delta\subset \partial\Omega\) we can fix a small constant \(\gamma\) for which we obtain a large constant \(\kappa_0\) and a good set \(F\subset \Delta\) from \reflemma{lemma:UniformBoundOnM[nablaphi^s]}.

To start with, we introduce a smooth cut-off function \(\psi\in C^{\infty}(\mathbb{R}^{n+1}_+)\) on \(T(\Delta)\cap \Omega_\alpha(F)\) with
\[\psi\equiv 1 \textrm{ on } T(\Delta)\cap\Omega_\alpha(F),\]
and
\[\psi\equiv 0 \textrm{ on } \Omega\setminus (T(3\Delta)\cap\Omega_{2\alpha}(F)).\]
Furthermore let \(\tilde{\delta}(x):=\mathrm{dist}(x,F)\) be the distance between the good set \(F\) and a point on the boundary \(x\in \partial\Omega\). To abbreviate, let
\begin{align*}
    &E_1=\big\{(x,t)\in T(3\Delta) ; \alpha t\leq \tilde{\delta}(x)\leq 2\alpha t \big\},
    \\
    &E_2=\big\{(x,t)\in 3\Delta\times [(l(\Delta),3l(\Delta)]; \tilde{\delta}(x)\leq 2\alpha t \big\}
\end{align*}
and we can choose \(\psi\) such that
\[|\nabla _{x,t}\psi(x,t)|\lesssim \frac{1}{\eta t}\chi_{E_1}(x,t)+\frac{1}{l(\Delta)}\chi_{E_2}(x,t) \qquad \textrm{for all }(x,t)\in \Omega.\]

We can start to estimate the left side of \eqref{eq:SqFctonSawtooth}. Since \(u\psi^2t\in W^{1,2}_0(\Omega)\) we have
\[\int_{\Omega}A\nabla u\cdot\nabla (u\psi^2t) dxdt=0,\]
and hence
\begin{align*}
    \int_{T(\Delta)\cap \Omega_\alpha(F)}|\nabla_{x,t} u|^2t dxdt&\leq \int_{\Omega}|\nabla_{x,t} u|^2\psi^2t dxdt
    \\
    &\lesssim \int_{\Omega}A\nabla_{x,t} u\cdot\nabla_{x,t} u\psi^2t dxdt
    \\
    &=\int_{\Omega}A\nabla_{x,t} u\cdot\nabla_{x,t} (u\psi^2t) dxdt - \int_{\Omega}A\nabla_{x,t} u\cdot \nabla_{x,t} \psi \psi u t dxdt 
    \\
    &\qquad - \int_{\Omega}A\nabla_{x,t} u\cdot \vec{e}_{n+1}u\psi^2 dxdt
    \\
    &=-\int_{\Omega}A\nabla_{x,t} u\cdot \nabla_{x,t} \psi \psi u t dxdt - \int_{\Omega}A\nabla_{x,t} u\cdot \vec{e}_{n+1}u\Psi^2 dxdt
    \\
    &\eqcolon J_1+J_2.
\end{align*}

For the first term \(J_1\) we have with boundedness of \(A\) that
\begin{align*} 
|J_1|&\leq \Big(\int_\Omega |\nabla_{x,t} u|^2\psi^2t dxdt\Big)^{1/2}\Big(\int_\Omega |\nabla_{x,t} \psi|^2u^2t dxdt\Big)^{1/2}
\\
&\leq \sigma \int_\Omega |\nabla_{x,t} u|^2\psi^2t dxdt + C_\sigma \int_\Omega |\nabla_{x,t} \psi|^2|u|^2t dxdt,
\end{align*}
and for sufficiently small \(\sigma>0\) we can hide the first term on the left side. For the other term we split into
\[\int_\Omega |\nabla_{x,t} \psi|^2u^2t dxdt\lesssim \int_\Omega \frac{|u|^2}{\eta^2 t}\chi_{E_1} dxdt+\int_\Omega \frac{|u|^2}{l(\Delta)}\chi_{E_2} dxdt=:\mathcal{E}_1 + \mathcal{E}_2.\]
We can bound \(\mathcal{E}_2\) by
\[\mathcal{E}_2\lesssim \int_{3\Delta\times [l(\Delta),3l(\Delta)]} \frac{|u|^2}{l(\Delta)}dxdt\lesssim \Vert u\Vert_{L^\infty(\Omega)}^2|\Delta|\leq |\Delta|.\]
For \(\mathcal{E}_1\) observe first that for \(x\notin F\)
\[\int_0^{3l(\Delta)}\frac{1}{t}\chi_{E_1}(x,t)dt=\int_{\alpha\tilde{\delta}(x)}^{2\alpha\tilde{\delta}(x)}\frac{1}{t}\chi_{E_1}(x,t)dt=\ln(2),\]
which leads to
\[\mathcal{E}_1\lesssim \Vert u\Vert_{L^\infty(\Omega)}\int_{3\Delta}\Big(\int_0^{3l(\Delta)}\frac{1}{t}\chi_{E_1}(x,t)dt\Big)dx\lesssim |\Delta|.\]

For \(J_2\) we continue to break up the integral into
\[-\int_{\Omega}A\nabla_{x,t} u\cdot \vec{e}_{n+1}u\psi^2 dxdt=-\int_{\Omega}c\cdot\nabla_x u u\psi^2 dxdt-\int_{\Omega}d \partial_t u u\psi^2 dxdt=:J_{21}+J_{22}.\]

We have for \(J_{22}\)
\begin{align*}
    -J_{22}&=\int_{\Omega}\partial_t(d u^2\psi^2) dxdt + \int_{\Omega}d u^2\partial_t\psi^2 dxdt + \int_{\Omega}\partial_t d u^2\psi^2
    \\
    &=\int_{3\Delta}d u^2\psi^2 dxdt + \int_{\Omega}\frac{u^2}{\eta t}\chi_{E_1}dxdt + \int_{\Omega}\frac{u^2}{l(\Delta)}\chi_{E_2}dxdt + \int_{\Omega}\partial_t d u^2\psi^2.
\end{align*}
While the second and third term are \(\mathcal{E}_1\) and \(\mathcal{E}_2\) respectively and hence can be bounded by \(|\Delta|\), the first term can be bounded by \(\Vert d\Vert_\infty\Vert u\Vert_\infty|\Delta|\) and hence \(|\Delta|\) directly. The last term can also be bounded with \eqref{cond:L1-Linfty}. Thus it remains to bound \(J_{21}\). From here on we drop the subscript of the gradient \(\nabla=\nabla_x\). 

We proceed with
\begin{align*}
    J_{21}=-\int_{\Omega}c\cdot\nabla (u^2\psi^2) dxdt + \int_{\Omega}c\cdot\nabla \psi u^2\psi dxdt:=J_{211}+J_{212}.
\end{align*}
Again, we have 
\[ |J_{212}|\lesssim\int_{\Omega}\frac{u^2}{\eta t}\chi_{E_1}dxdt + \int_{\Omega}\frac{u^2}{l(\Delta)}\chi_{E_2}dxdt\lesssim \mathcal{E}_1+\mathcal{E}_2.\]
For the other term \(J_{211}\) we apply the Hodge decomposition of \(c\) (see Section \ref{section:HodgeDecomposition} and Section \ref{section:rho}) to get
\begin{align*}
    -J_{211}&=\int_{\Omega}\AP\nabla\phi^s \cdot\nabla(u^2\psi^2)dxdt
    \\
    &=\int_{\Omega}\AP\nabla \theta_\eta \cdot\nabla(u^2\psi^2)dxdt - \int_{\Omega}\AP\nabla\rho_\eta \cdot\nabla(u^2\psi^2)dxdt
    \\
    &=\int_{\Omega}\AP\nabla \theta_\eta \cdot\nabla(u^2\psi^2)dxdt - \int_{\Omega}\partial_t\AP\nabla\rho_\eta \cdot\nabla(u^2\psi^2)tdxdt 
    \\
    &\qquad  -\int_{\Omega}\AP\nabla \partial_t\rho_\eta \cdot\nabla(u^2\psi^2)tdxdt - \int_{\Omega}\AP\nabla\rho_\eta \cdot\nabla\partial_t(u^2\psi^2)tdxdt
    \\
    &=:J_{2111}+I_1+I_2+I_3.
\end{align*}

First we deal with \(I_1,I_2\) and \(I_3\). By \reflemma{lemma:L^2estimatesForSquareFunctions} we have
\[I_1\lesssim \Big(\int_{T(3\Delta)}|\nabla\partial_t\rho_\eta|^2tdxdt\Big)^{1/2}\Big(\int_{T(3\Delta)}|\nabla(u^2\psi^2)|^2tdxdt\Big)^{1/2}\lesssim |\Delta|^{1/2} J_1^{1/2}\lesssim |\Delta|.\]

Next, for \(I_2\) we have by \reflemma{lemma:L^2estimatesForSquareFunctions}
\[I_2\lesssim \Big(\int_{T(3\Delta)}|\Div(\AP\nabla\rho_\eta)|^2tdxdt\Big)^{1/2}\Big(\int_{T(3\Delta)}|\partial_t(u^2\psi^2)|^2tdxdt\Big)^{1/2}\lesssim |\Delta|^{1/2} J_1^{1/2}\lesssim |\Delta|.\]

At last, for \(I_3\) we have by \reflemma{lemma:L^2estimatesForSquareFunctions}
\[I_3\lesssim \Big(\int_{T(3\Delta)}|\partial_t \AP|^2|\nabla\rho_\eta|^2tdxdt\Big)^{1/2}\Big(\int_{T(3\Delta)}|\nabla(u^2\psi^2)|^2tdxdt\Big)^{1/2}\lesssim |\Delta|^{1/2} J_1^{1/2}\lesssim |\Delta|.\]

We proceed with \(J_{2111}\) and integration by parts to obtain
\begin{align*}
    -\int_{\Omega}\AP\nabla \theta_\eta \cdot\nabla(u^2)\psi^2dxdt - \int_{\Omega}\AP\nabla \theta_\eta \cdot u^2\nabla(\psi^2)dxdt=:II_1+II_2.
\end{align*}

For \(II_2\) we observe that
\begin{align*} 
    |II_2|&\lesssim \Big(\int_{\Omega}\frac{|\nabla\theta_\eta|^2}{t}\chi_{E_1\cup E_2}dxdt\Big)^{1/2}\Big(\int_{T(\Delta)}|\nabla u|^2\psi^2 tdxdt\Big)^{1/2}
    \\
    &\lesssim C_\sigma \int_{\Omega}\frac{|\nabla\theta_\eta|^2}{t}\chi_{E_1\cup E_2}dxdt+\sigma \int_{T(\Delta)}|\nabla u|^2\psi^2 tdxdt,
\end{align*}
and for small \(\sigma>0\) we can hide the second term on the left side. For the first term we can calculate with \reflemma{lemma:UnifromBoundOnrholocally} and \reflemma{lemma:UniformBoundOnM[nablaphi^s]}
\begin{align*}
    \int_{\Omega}\frac{|\nabla\theta_\eta|^2}{t}\chi_{E_1\cup E_2}dxdt&\lesssim \sum_{k}\sum_{Q^\prime\in \mathcal{D}^\eta_k(3\Delta)}\int_{Q^\prime}\fint_{\eta 2^{-k}}^{\eta 2^{-k+1}}|\nabla \theta_\eta|^2\chi_{E_1\cup E_2}dxdt
    \\
    &\lesssim \sum_{k}\sum_{Q^\prime\in \mathcal{D}^\eta_k(3\Delta)}|Q^\prime|\fint_{Q^\prime}\fint_{\eta 2^{-k}}^{\eta 2^{-k+1}}|\nabla \rho_\eta|^2\chi_{E_1\cup E_2}dxdt
    \\
    &\qquad +\sum_{k}\sum_{Q^\prime\in \mathcal{D}^\eta_k(3\Delta)}|Q^\prime|\fint_{Q^\prime}\fint_{\eta 2^{-k}}^{\eta 2^{-k+1}}|\nabla \phi^t|^2\chi_{E_1\cup E_2}dxdt
    \\
    &\lesssim \sum_{k}\sum_{Q^\prime\in \mathcal{D}^\eta_k(3\Delta)}|Q^\prime| \kappa_0 \lesssim |\Delta|.
\end{align*}

For \(II_1\) we get by integration by parts
\begin{align*}
    II_1&=-\int_{\Omega}\AP\nabla u \cdot\nabla(\theta_\eta u \psi^2)dxdt + \int_{\Omega}\AP\nabla u \cdot\nabla u \theta_\eta \psi^2dxdt + \int_{\Omega}\AP\nabla u \cdot\nabla \psi^2 u \theta_\eta dxdt
    \\
    &=:II_{11}+II_{12}+II_{13}.
\end{align*}

Due to \reflemma{lemma:thetaPointwiseBoundOnOmega(F)} we can bound \(\theta_\eta\) pointwise by \(t\) and handle \(II_{13}\) like \(J_1\). For \(II_{12}\) the same lemma gives 
\[II_{12}=\eta \kappa_0 \int_{T(3\Delta)}|\nabla u|^2\psi^2 tdxdt.\]
Here is where we have to choose \(\eta\) sufficiently small so that we can hide this term on the left side. To deal with \(II_{11}\) we use that \(u\) is a weak solution and \(\theta_\eta u \psi^2\in W_0^{1,2}(\Omega)\) to conclude
\begin{align*}
    II_{11}&=-\int_{\Omega}\AP\nabla u \cdot\nabla(\theta_\eta u \psi^2)dxdt
    \\
    &= \int_{\Omega}b\partial_t u \cdot\nabla(\theta_\eta u \psi^2)dxdt
    + \int_{\Omega}c\cdot \nabla u \partial_t(\theta_\eta u \psi^2)dxdt
    + \int_{\Omega}d\partial_t u \partial_t(\theta_\eta u \psi^2)dxdt
    \\
    &=:II_{111}+II_{112}+II_{113}.
\end{align*}

For \(II_{113}\) we use the product rule and then use the bound on \(\theta_\eta\) from \reflemma{lemma:thetaPointwiseBoundOnOmega(F)} to get
\begin{align*}
    II_{113}&=\int_{\Omega}d\partial_t u \partial_t\theta_\eta u \psi^2 dxdt + \int_{\Omega}d\partial_t u \theta_\eta \partial_t u \psi^2 dxdt + \int_{\Omega}d\partial_t u \theta_\eta u \partial_t \psi^2 dxdt
    \\
    &\lesssim \Big(\int_{T(3\Delta)}\frac{|\partial_t\theta_\eta|^2}{t} dxdt\Big)^{1/2}\Big(\int_{T(3\Delta)}|\partial_t u|^2\psi^2 tdxdt\Big)^{1/2} 
    + \int_{T(3\Delta)}|\partial_t u|^2\psi^2 \theta_\eta dxdt
    \\
    &\qquad + \int_{\Omega}|\partial_t u \theta_\eta u \partial_t \psi^2| dxdt
    \\
    &\lesssim C_\sigma \int_{T(3\Delta)}\frac{|\partial_t\theta_\eta|^2}{t} dxdt + \sigma \int_{T(3\Delta)}|\partial_t u|^2\psi^2 tdxdt
    \\
    &\qquad +\eta \int_{T(3\Delta)}|\partial_t u|^2\psi^2 t dxdt +  \int_{\Omega}|\partial_t u^2 \partial_t \psi^2|t dxdt.
\end{align*}

Hence, for a sufficiently small choice of \(\eta\) and \(\sigma\) we can hide the second and third term on the left side, while the forth term is dealt with like \(J_1\) and boundedness of the first one by \(|\Delta|\) follows from \reflemma{lemma:L^2estimatesForSquareFunctions}.
\smallskip

Analogoulsy, we get for \(II_{112}\)
\begin{align*}
    II_{112}&=\int_{\Omega}c\cdot\nabla u \partial_t\theta_\eta u \psi^2 dxdt + \int_{\Omega}c\cdot\nabla u \theta_\eta \partial_t u \psi^2 dxdt + \int_{\Omega}c\cdot\nabla u \theta_\eta u \partial_t \psi^2 dxdt
    \\
    &\lesssim \Big(\int_{T(3\Delta)}\frac{|\partial_t\theta_\eta|^2}{t} dxdt\Big)^{1/2}\Big(\int_{T(3\Delta)}|\nabla u|^2\psi^2 tdxdt\Big)^{1/2} 
    + \int_{T(3\Delta)}|\nabla u|^2\psi^2 \theta_\eta dxdt
    \\
    &\qquad + \int_{T(3\Delta)}|\nabla u \theta_\eta u \partial_t \psi^2| dxdt
    \\
    &\lesssim C_\sigma \int_{T(3\Delta)}\frac{|\partial_t\theta_\eta|^2}{t} dxdt + \sigma \int_{T(3\Delta)}|\nabla u|^2\psi^2 tdxdt
    \\
    &\qquad +\eta \int_{T(3\Delta)}|\nabla u|^2\psi^2 t dxdt +  \int_{T(3\Delta)}|\nabla u^2 \partial_t \psi^2|t dxdt,
\end{align*}

and hence \(II_{112}\lesssim |\Delta|\) with the same arguments as for \(II_{113}\). For \(II_{111}\) however, we split the integral into
\begin{align*}
    II_{111}&=\int_{\Omega}b\cdot \nabla\theta_\eta \partial_t u^2 \psi^2 dxdt + \int_{\Omega}b\cdot \nabla u \partial_t u^2 \psi^2 \theta_\eta dxdt + \int_{\Omega}b\cdot \nabla \psi u\partial_t u\psi \theta_\eta dxdt
    \\
    &=:II_{1111}+II_{1112}+II_{1113}.
\end{align*}
Since \reflemma{lemma:thetaPointwiseBoundOnOmega(F)} implies
\[II_{1112}\lesssim \eta \int_{T(3\Delta)}|\nabla u|^2\psi^2 t dxdt,\]
and
\[II_{1113}\lesssim C_\sigma\int_{T(3\Delta)}\frac{|u|^2}{t}\chi_{E_1\cup E_2} dxdt +\sigma\int_{T(3\Delta)}|\partial_t u|^2\psi^2 tdxdt\]
we get boundedness of these two terms like before in \(\mathcal{E}_1,\mathcal{E}_2\) and through hiding of terms on the left side with small choices of \(\eta\) and \(\sigma\).
\smallskip

Next for \(II_{1111}\), we integrate by parts in \(\partial_t\) direction and obtain
\begin{align*}
     II_{1111}&=-\int_{\Omega}b\cdot \nabla\partial_t\theta_\eta u^2 \psi^2 dxdt - \int_{\Omega}b\cdot \nabla \theta_\eta u^2\psi \partial_t\psi  dxdt - \int_{\Omega}\partial_t b\cdot \nabla \theta_\eta u^2\psi^2  dxdt.
\end{align*}
Recalling that \(\partial_t\theta_\eta=\partial_t\phi^t-(w_t+w_s^{(1)}+w_s^{(2)})\) allows to conclude further
\begin{align*}
     II_{1111}&=-\int_{\Omega}b\cdot \nabla w_t u^2 \psi^2 dxdt - \int_{\Omega}b\cdot \nabla(\partial_t\phi^t-w_s^{(1)}+w_s^{(2)}) u^2 \psi^2 dxdt
    \\
    &\qquad
    - \int_{\Omega}b\cdot \nabla \theta_\eta u^2\psi \partial_t\psi  dxdt - \int_{\Omega}\partial_t b\cdot \nabla \theta_\eta u^2\psi^2  dxdt
     \\
     &\lesssim -\int_{\Omega}b\cdot \nabla(w_t u^2 \psi^2) dxdt - \int_{\Omega}b\cdot \nabla(\partial_t\phi^t-w_s^{(1)}+w_s^{(2)}) u^2 \psi^2 dxdt
    \\
    &\qquad +\int_{\Omega}b\cdot \nabla (u^2 \psi^2) w_t dxdt
    - \int_{\Omega}b\cdot \nabla \theta_\eta u^2\psi \partial_t\psi  dxdt - \int_{\Omega}\partial_t b\cdot \nabla \theta_\eta u^2\psi^2  dxdt
    \\
    &=:III_{1}+III_{2}+III_{3}+III_{4}+III_5.
\end{align*}
We can bound \(III_{4}\) like \(II_2\). For \(III_3\) Hölder's inequality yields 
\begin{align*}
    III_{3}&\lesssim \int_{T(3\Delta)}|w_t\nabla u u\psi^2|+|w_t u^2\nabla\psi \psi|dxdt
    \\
    &\lesssim \Big(\int_{T(3\Delta)}\frac{|w_t|^2}{t} dxdt\Big)^{1/2}\Big(\int_{T(3\Delta)}|\nabla u|^2\psi^2 t dxdt\Big)^{1/2}
    \\
    &\qquad + \Big(\int_{T(3\Delta)}\frac{|w_t|^2}{t} dxdt\Big)^{1/2}\Big(\int_{T(3\Delta)}|u\nabla\psi|^2 t dxdt\Big)^{1/2}
    \\
    &\lesssim C_\sigma\int_{T(3\Delta)}\frac{|w_t|^2}{t} dxdt + \sigma\int_{T(3\Delta)}|\nabla u|^2\psi^2 t dxdt + \int_{T(3\Delta)}|u\nabla\psi|^2 t dxdt.
\end{align*} 
Bounding the first term by \reflemma{lemma:SqFctBoundsForw_t}, hiding the second one for a small choice of \(\sigma\) on the left hand side, and bounding the third term like in \(\mathcal{E}_1\) and \(\mathcal{E}_2\) we obtain boundedness of \(III_3\) by \(|\Delta|\). 

For \(III_2\) we observe that due to \reflemma{lemma:SqFctBoundsForw_s^2}, \reflemma{lemma:nablaSemigroupBoundedByNablaf}, \eqref{equation:L2ofpartial_snablaphiBounded} and \eqref{equation:L2ofnablaphiBounded} we have
\begin{align*}
    III_2&\lesssim\int_{\Omega}|\nabla\partial_t\phi^t| +|\nabla w_s^{(1)}|+|\nabla w_s^{(2)}| dxdt
    \\
    &\lesssim |\Delta|^\frac{1}{2}\Big(\int_0^{l(\Delta)}\Vert \nabla\partial_t\phi^t\Vert_{L^2(3\Delta)}dt + \int_0^{l(\Delta)}\Vert \nabla w_s^{(1)}\Vert_{L^2(3\Delta)}dt + \int_0^{l(\Delta)}\Vert \nabla w_s^{(2)}\Vert_{L^2(3\Delta)}dt\Big)
    \\
    &\lesssim |\Delta|\int_0^{l(\Delta)}\Vert \partial_t A(\cdot,t)\Vert_{L^\infty(\mathbb{R}^n)}dt\lesssim |\Delta|.
\end{align*}

For \(III_5\) we get with \eqref{cond:L1-Linfty}, \reflemma{lemma:nablaSemigroupBoundedByNablaf}, and \eqref{equation:L2ofnablaphiBounded}
\[III_5=\int_{\Omega}|\partial_t b\cdot \nabla \theta_\eta u^2\psi^2|  dxdt\lesssim \int_0^{3l(\Delta)}\Vert\partial_t b(\cdot,t)\Vert_\infty|\Delta|^{1/2}\Big(\int_{3\Delta}|\nabla \theta_\eta|^2\Big)^{1/2}dxdt\lesssim |\Delta|.\]

For \(III_1\) however, we need the Hodge decomposition of \(b\). Since \(w_t\psi^2 u^2\in W_0^{1,2}(T(3\Delta))\) we have

\begin{align*}
    III_1= \int_{\Omega}\AP\nabla\tilde{\theta}_\eta\cdot \nabla(w_t u^2 \psi^2) dxdt + \int_{\Omega}\AP\nabla\tilde{\rho}_\eta\cdot \nabla(w_t u^2 \psi^2) dxdt
    =:III_{11}+III_{12}.
\end{align*}

With integration by parts the second term becomes
\[III_{12}\lesssim \Big(\int_{T(3\Delta)}\frac{|w_t|^2}{t} dxdt\Big)^{1/2}\Big(\int_{T(3\Delta)}|L^t\tilde{\rho}_\eta|^2 t dxdt\Big)^{1/2}\lesssim |\Delta|,\]
where the last inequality follows from \reflemma{lemma:L^2estimatesForSquareFunctions} and \reflemma{lemma:SqFctBoundsForw_t}.

For \(III_{11}\) we obtain
\begin{align*}
    III_{11}&=\int_{\Omega}\AP\nabla\tilde{\theta}_\eta\cdot \nabla u^2 w_t \psi^2 dxdt + \int_{\Omega}\AP\nabla\tilde{\theta}_\eta\cdot \nabla \psi^2 w_t u^2  dxdt + \int_{\Omega}\AP\nabla\tilde{\theta}_\eta\cdot \nabla w_t u^2 \psi^2 dxdt
    \\
    &=:III_{111}+III_{112}+III_{113}.
\end{align*}

First, we see that 
\[III_{112}\lesssim\Big(\int_{T(3\Delta)}\frac{|w_t|^2}{t} dxdt\Big)^{1/2}\Big(\int_{T(3\Delta)}|\nabla\tilde{\theta}_\eta|^2|\nabla\psi|^2 t dxdt\Big)^{1/2}\] 
can be dealt with by \(II_2\) and \reflemma{lemma:SqFctBoundsForw_t}.

Next, for \(III_{113}\) we obtain by integration by parts
\begin{align*}
    III_{113}&=\int_{\Omega} L^t w_t u^2\psi^2 \tilde{\theta}_\eta dxdt + \int_{\Omega} \AP \nabla w_t \cdot\nabla(u^2\psi^2) \tilde{\theta}_\eta dxdt,
\end{align*}
where the first term can be bounded by
\[\Big(\int_{T(3\Delta)}\frac{|\tilde{\theta}_\eta|^2}{t^3} dxdt\Big)^{1/2}\Big(\int_{T(3\Delta)}|L^tw_t|^2 t^3 dxdt\Big)^{1/2}\lesssim |\Delta|\]
due to \reflemma{lemma:L^2estimatesForSquareFunctions} and \reflemma{lemma:SqFctBoundsForw_t}, while the second term is bounded by
\begin{align*}
    &\int_{T(3\Delta)}|\nabla w_t\nabla u u\psi^2|t+|\nabla w_t u^2\nabla\psi \psi|tdxdt
    \\
    &\lesssim C_\sigma\int_{T(3\Delta)}|\nabla w_t|^2t dxdt+\sigma\int_{T(3\Delta)}|\nabla u|^2\psi^2 t dxdt + \int_{T(3\Delta)}|u\nabla\psi|^2 t dxdt,
\end{align*}
which is bounded by \(|\Delta|\) using the same arguments as before in \(III_3\) (hiding terms on the left side, \reflemma{lemma:SqFctBoundsForw_t}, \(\mathcal{E}_1\) and \(\mathcal{E}_2\)).

At last, it remains to bound \(III_{111}\). For that we have
\[III_{111}\lesssim \sigma\int_{T(3\Delta)}|\nabla u|^2\psi^2 t dxdt + C_\sigma\int_{T(3\Delta)}\frac{|\nabla\tilde{\theta}_\eta|^2|w_t|^2\chi_{\mathrm{supp}(\psi)}}{t} dxdt.\]
The first term can be hiden on the left side with sufficiently small choice of \(\sigma\). For the second one we write 
\begin{align*}
    &\int_{T(3\Delta)}\frac{|\nabla\tilde{\theta}_\eta|^2|w_t|^2\chi_{\mathrm{supp}(\psi)}}{t} dxdt
    =\sum_k\sum_{Q\in \mathcal{D}^\eta_k}\int_{Q}\int_{2^{-k}}^{2^{-k+1}}\frac{|\nabla\tilde{\theta}_\eta|^2|w_t|^2\chi_{\mathrm{supp}(\psi)}}{t}dxdt
    \\
    &\qquad=\sum_k\sum_{Q\in \mathcal{D}^\eta_k}\Big(\fint_{Q}\int_{2^{-k}}^{2^{-k+1}}\frac{|\nabla\tilde{\theta}_\eta|^2\chi_{\mathrm{supp}(\psi)}}{t}dxdt\Big)\Big(|Q|\sup_{Q\times (2^{-k},2^{-k+1}]}|w_t|\Big)
\end{align*}
Using \reflemma{lemma:UnifromBoundOnrholocally} and \reflemma{lemma:UniformBoundOnM[nablaphi^s]} we see that
\begin{align*}
&\fint_{Q}\fint_{ 2^{-k}}^{2^{-k+1}}|\nabla\tilde{\theta}_\eta|^2\chi_{\mathrm{supp}(\psi)}dxdt
\\
&=\fint_{Q}\fint_{ 2^{-k}}^{ 2^{-k+1}}|\nabla\tilde{\rho}_\eta|^2\chi_{\mathrm{supp}(\psi)}dxdt+\fint_{Q}\fint_{2^{-k}}^{2^{-k+1}}|\nabla\tilde{\phi}^t|^2\chi_{\mathrm{supp}(\psi)}dxdt\lesssim \kappa_0,
\end{align*}

and hence with \reflemma{lemma:localHarnackTypeInequalityForw_t}
\begin{align*}
    \int_{T(3\Delta)}\frac{|\nabla\tilde{\theta}_\eta|^2|w_t|^2\chi_{\mathrm{supp}(\psi)}}{t} dxdt
    &\lesssim\sum_k\sum_{Q\in \mathcal{D}^\eta_k}\int_{2Q}\int_{ 2^{-k-1}}^{ 2^{-k+2}}\frac{|w_t|^2}{t}dxdt
    \\
    &\lesssim\int_{T(3\Delta)}\frac{|w_t|^2}{t} dxdt + |\Delta|\int_0^{5l(\Delta)}\Vert\partial_s A(\cdot,s)\Vert_\infty^2 s ds\lesssim |\Delta|.
\end{align*}
The last inequality follows again from \reflemma{lemma:L^2estimatesForSquareFunctions}. In total we verified \eqref{eq:SqFctonSawtooth}.

\section{The improvement in \(n=1\)}

\subsection{Proof of \refthm{thm:MainTheoremn=1}}
The proof of \refthm{thm:MainTheoremn=1} is analogous to the proof of \refthm{thm:MainTheorem} with minor modifications. For the \(L^1-L^\infty\) condition \eqref{cond:L1-Linfty} we collected the bounds we needed on quantities involving \(\theta\) and \(\rho\) in \reflemma{lemma:L^2estimatesForSquareFunctions} and we will prove that in \(n=1\) the same results hold assuming merely \eqref{condition:L^1Carlesontypecond}.
\smallskip

For this we first replace the uniform pointwise bound of \(M[\nabla\phi^s]\) (\reflemma{lemma:UniformBoundOnM[nablaphi^s]}) and all the area function estimates of the different partial derivatives of \(\theta\) (\reflemma{lemma:SqFctBoundsForpartial_sPhiAndw_s^1}, \reflemma{lemma:SqFctBoundsForw_s^2}, and  \reflemma{lemma:SqFctBoundsForw_t}) by analogous results assuming \eqref{condition:L^1Carlesontypecond} instead of \eqref{cond:L1-Linfty}. These are \reflemma{lemma:UniformBoundOnM[nablaphi^s]n=1}, \reflemma{lemma:SqFctBoundsForpartial_sPhiAndw_s^1n=1}, \reflemma{lemma:SqFctBoundsForw_s^2n=1}, and  \reflemma{lemma:SqFctBoundsForw_tn=1} and will appear in the next sections. The key improvement that allows proving the same results under a weaker condition \eqref{condition:L^1Carlesontypecond} comes from the fact that in \(n=1\) the uniform pointwise bound of \(M[\nabla\phi^s]\) (\reflemma{lemma:UniformBoundOnM[nablaphi^s]}), that only held on a good set, now actually holds everywhere (see \reflemma{lemma:UniformBoundOnM[nablaphi^s]n=1}).
\smallskip

Now let us address the minor modifications that are needed in following the main proof of \refthm{thm:MainTheorem} in the previous section. Since we have an analogous version of \reflemma{lemma:L^2estimatesForSquareFunctions} that only assumes \eqref{condition:L^1Carlesontypecond}, there are only two terms that need a closer look:
\begin{itemize}
    \item \(III_5\): In contrast to the proof for \eqref{cond:L1-Linfty}, we do have a pointwise uniform bound on \(M[\nabla\phi^s]\) not only on a good set but on all of \(3\Delta\) (see \reflemma{lemma:UniformBoundOnM[nablaphi^s]n=1}). Hence we have a pointwise everywhere bound on \(\nabla \theta\), and using this and the newly imposed \eqref{condition:L^1Carlesontypecond} we obtain the same bound for \(III_5\).
    \item \(III_2\): In contrast to the proof for \eqref{cond:L1-Linfty}, we cannot integrate in the \(x\)-direction first and expect good bounds on the \(L^2(3\Delta)\) norm of the quantities \(\nabla\partial_t\phi^t,w_s^{(1)}, w_s^{(2)}\). However, in the following we establish \eqref{lemma:w_s^1SqFctBound3n=1} in \reflemma{lemma:SqFctBoundsForpartial_sPhiAndw_s^1n=1}, and \eqref{lemma:w_s^2SqFctBound3n=1} which are the needed bounds for \(\partial_x\partial_s\phi^s, w_s^{(1)}\) and \(w_s^{(2)}\). Again, they essentially follow from the stronger pointwise everywhere bound \reflemma{lemma:UniformBoundOnM[nablaphi^s]n=1} and \eqref{condition:L^1Carlesontypecond}.
\end{itemize}

We can also note that the estimates in \reflemma{lemma:UniformBoundOnM[nablaphi^s]n=1} to  \reflemma{lemma:SqFctBoundsForw_tn=1} are only bounded for small boundary balls. Without loss of generality, we can restrict to showing \eqref{SquarefctBound} for small boundary balls and hence we assume for the following \(l(\Delta)\leq r_0\).

\subsection{Hodge decomposition}
The improvement for \(n=1\) comes from the Hodge decomposition which takes on an easier and more explicit form.
\smallskip

For each \(s>0\) we can find a Hodge decomposition consisting of \(\phi^s\in W_0^{1,2}(3\Delta)\) and \( h^s\in L^2(3\Delta)\), where \(h^s\) is divergence free and
\[c(x,s)\chi_{3\Delta}(x)=\AP(x,s)\partial_x\phi^s(x)+h^s(x).\]
Since this is a PDE in one dimension, if we write \(3\Delta=(a, a+3l(\Delta))\), the divergence free function is the constant 
\[h^s=\frac{\fint_{a}^{a+3l(\Delta)} \frac{c(x,s)}{\AP(x,s)}dx}{\fint_{a}^{a+3l(\Delta)} \frac{1}{\AP(x,s)}dx},\]
and 
\[\phi^s(y)=\int_{a}^y \frac{c(x,s)- h^s}{\AP(x,s)} dx\qquad \textrm{ for }y\in 3\Delta. \]

Hence we obtain the following uniform bounds on \(\partial_x \phi^s\) and \(\partial_s\partial_x\phi^s\):
\begin{lemma}\label{lemma:UniformBoundOnM[nablaphi^s]n=1}
There exists \(\kappa_0>0\) such that
\[M[|\partial_x\phi^s|](x)\leq C(\lambda,\Lambda_0)\leq \kappa_0,\]
and hence
\[\Vert M[\partial_x\phi^s]\Vert_{L^2}\leq C(\lambda,\Lambda_0)\leq \kappa_0.\]
Furthermore, we have for every \(x\in 3\Delta\)
\[|\partial_s\partial_x \phi^s(x)|\lesssim |\partial_s A(x,s)| + \fint_{3\Delta}|\partial_s A(y,s)| dy.\]
\end{lemma}

The relaxation of condition \eqref{cond:L1-Linfty} to \eqref{condition:L^1Carlesontypecond} comes now from the fact that all the pointwise estimates for \reflemma{lemma:UniformBoundOnM[nablaphi^s]} to \reflemma{lemma:UnifromBoundOnrholocally} not only hold on a good set \(F\) but on all of \(\Delta\). With this in hand, we can improve the \(L^2\)-area function bounds.

\subsection{\(L^2\) area function bounds}

The first lemma is analogous to \reflemma{lemma:SqFctBoundsForpartial_sPhiAndw_s^1}.

\begin{lemma}\label{lemma:SqFctBoundsForpartial_sPhiAndw_s^1n=1}
Assuming \eqref{condition:L^1Carlesontypecond} the following area function bounds hold
\begin{enumerate}[(i)]
    \item \(\int_{T(3\Delta)}\frac{|\partial_s\phi^s-w_s^{(1)}|^2}{s}dxds\lesssim |\Delta|;\)\label{lemma:w_s^1SqFctBound1n=1}
    \item \(\int_{T(3\Delta)}\frac{|s\partial_x\partial_s\phi^s|^2}{s}dxds+\int_{T(3\Delta)}\frac{|s\partial_x w_s^{(1)}|^2}{s}dxds\lesssim |\Delta|; \label{lemma:w_s^1SqFctBound2n=1}\) and 
    \item \(\int_{T(3\Delta)}|\partial_x\partial_s\phi^s|dxds + \int_{T(3\Delta)}|\partial_x w_s^{(1)}|dxds\lesssim |\Delta|.\label{lemma:w_s^1SqFctBound3n=1}\)
\end{enumerate}
\end{lemma}

\begin{proof}
   We only need to prove \eqref{lemma:w_s^1SqFctBound3n=1} since \eqref{lemma:w_s^1SqFctBound1n=1} and \eqref{lemma:w_s^1SqFctBound2n=1} follow as in \reflemma{lemma:SqFctBoundsForpartial_sPhiAndw_s^1} using  \reflemma{lemma:UniformBoundOnM[nablaphi^s]n=1} instead of \reflemma{lemma:UniformBoundOnM[nablaphi^s]}. 
   
   For \eqref{lemma:w_s^1SqFctBound3n=1}, we have by \reflemma{lemma:UniformBoundOnM[nablaphi^s]n=1} and \reflemma{lemma:UnifromBoundOnrholocally} 
    \[\int_{T(3\Delta)}|\partial_x\partial_s\phi^s|dxds\lesssim \int_{T(3\Delta)}|\partial_s\AP|dxds\lesssim |\Delta|,\]
    and for a scale \(k_0\) such that \(3l(\Delta)\leq 2^{-k_0} \leq 5l(\Delta)\)
    \begin{align*}
        \int_{T(3\Delta)}|\partial_x w_s^{(1)}|dxds&=\int_0^{3l(\Delta)}\sum_{k\geq k_0, Q\in\mathcal{D}_k(3\Delta)}\Big(\fint_Q |\partial_x e^{-\eta^2s^2L^s}\partial_s\phi^s|^2 dx\Big)^{1/2}|Q|
        \\
        &\lesssim \int_0^{3l(\Delta)}\sum_{k\geq k_0, Q\in\mathcal{D}_k(3\Delta)}\sup_{B(x,s,s/2)}|\partial_s\AP||Q|
        \\
        &\lesssim \int_{T(5\Delta)}\sup_{B(x,s,s/2)}|\partial_s\AP|\lesssim |\Delta|.
    \end{align*}
\end{proof}

We replace \reflemma{lemma:w_s^2/nablaw_s^2spatialL^2Bound} by
\begin{lemma}\label{lemma:w_s^2/nablaw_s^2spatialL^2Boundn=1}
    Assuming \eqref{condition:L^1Carlesontypecond} it holds that
\begin{align}
\int_{T(3\Delta)}\frac{|w_s^{(2)}|^2}{s}dxds\lesssim |\Delta|.\label{lemma:w_s^2SqFctBound1n=1}
\end{align}
\end{lemma}

\begin{proof}
    As in the proof of \reflemma{lemma:w_s^2/nablaw_s^2spatialL^2Bound} we obtain
    \begin{align*}
        &\int_{T(3\Delta)}\frac{|w_s^{(2)}|^2}{s}dxds
        \\
        &\lesssim \eta^2\int_0^{3l(\Delta)}\frac{1}{s}\Big(\int_0^{s}\frac{\tau}{\sqrt{s^2-\tau^2}}\Big(\int_{\mathbb{R}^n} |\partial_s A(x,s)\partial_x w(x,\eta^2\tau^2,s))|^2 dx\Big)^{1/2} d\tau\Big)^2 ds.
    \end{align*}
    
    Let us now fix a scale \(l\) such that \(2^{-l}\approx \tau\) and use off-diagonal estimates (\refprop{prop:off-diagonal}) to obtain 
    \begin{align*}
        &\Big(\int_{\mathbb{R}^n} |\partial_s A(x,s)\partial_x e^{-\eta^2\tau^2L^s}\phi^s(x)|^2 dx\Big)^{1/2}
        \\
        &=\sum_{k=1}^\infty \Big(\int_{2^k 3\Delta \setminus 2^{k-1}3\Delta}|\partial_s A(x,s)\partial_x e^{-\eta^2\tau^2L^s}\phi^s(x)|^2 dx\Big)^{1/2}
        \\
        &\qquad \qquad + \Big(\int_{3\Delta} |\partial_s A(x,s)\partial_x e^{-\eta^2\tau^2L^s}\phi^s(x)|^2 dx\Big)^{1/2}
        \\
        &=\sum_{k=1}^\infty \frac{1}{s}e^{-c\frac{2^{k} l(\Delta)^2}{\tau^2}}\Vert\phi^s\Vert_{L^2} + \Big(\sum_{\substack{Q\in\mathcal{D}_l(5\Delta) \\ l(Q)\approx 2^{-l}\approx \tau}} \sup_{x\in Q}|\partial_s A(x,s)|^2\int_Q|\partial_x e^{-\eta^2\tau^2L^s}\phi^s(x)|^2 dx\Big)^{1/2},
    \end{align*}
    where we made use of the pointwise bound of \(|\partial_s A|\lesssim \frac{1}{s}\). By Poincar\'{e} inequality, we obtain \(\Vert\phi^s\Vert_{L^2}\lesssim l(\Delta)\Vert\partial_x \phi^s\Vert_{L^2}\lesssim l(\Delta)\), whence
    
    \begin{align*}
        \sum_{k=1}^\infty \frac{l(\Delta)}{s}e^{-c\frac{2^{k} l(\Delta)^2}{\tau}}\leq \sum_{k=1}^\infty \frac{l(\Delta)}{\tau}e^{-c\frac{2^{k} l(\Delta)^2}{\tau^2}}\lesssim \frac{\tau}{l(\Delta)}\lesssim C.
    \end{align*}

    For the second term we apply\reflemma{lemma:UnifromBoundOnrholocally} and \reflemma{lemma:UniformBoundOnM[nablaphi^s]n=1} and obtain
    \begin{align*}
        &\Big(\sum_{Q\in\mathcal{D}_l(5\Delta)} \sup_{x\in Q}|\partial_s A(x,s)|^2\int_Q|\partial_x e^{-\eta^2\tau^2L^s}\phi^s(x)|^2 dx\Big)^{1/2}
        \\
        &\qquad\lesssim\Big(\sum_{Q\in\mathcal{D}_l(5\Delta)} \sup_{x\in Q}|\partial_s A(x,s)|^2\int_Q|M[\partial_x\phi^s]|^2 dx\Big)^{1/2}
        \\
        &\qquad\lesssim\Big(\int_{5\Delta}\sup_{x\in B(x,s,s/2)}|\partial_s A(x,s)|^2 dx\Big)^{1/2}.
    \end{align*}

    Hence in total we get
    \begin{align*}
        &\int_{T(3\Delta)}\frac{|w_s^{(2)}|^2}{s}dxds
        \\
        &\lesssim \eta^2\int_0^{3l(\Delta)}\frac{1}{s}\Big(\int_0^{s}\frac{\tau}{\sqrt{s^2-\tau^2}}\Big( \int_{5\Delta}\sup_{x\in B(x,s,s/2)}|\partial_s A(x,s)|^2 dx\Big)^{1/2} d\tau\Big)^2 ds
        \\
        &\lesssim \eta^2\int_0^{3l(\Delta)}s\int_{5\Delta}\sup_{x\in B(x,s,s/2)}|\partial_s A(x,s)|^2 dx ds
        \\
        &\lesssim \int_{T(5\Delta)}\sup_{x\in B(x,s,s/2)}|\partial_s A(x,s)|^2s dx ds
        \lesssim |\Delta|.
    \end{align*}   
\end{proof}

Recall the definition of \(v_2\) in Section \ref{section:rho}. We replace \reflemma{lemma:SqFctBoundsForw_s^2} by

\begin{lemma}\label{lemma:SqFctBoundsForw_s^2n=1}
Let \(Q\subset \partial\Omega\) be a boundary cube of size \(s\), and the index \(i\in \mathbb{Z}\) such that \(s\in [2^{-i},2^{-i+1})\). We have the following local bound involving \(w_s^{(2)}\)
\begin{align}
    \int_Q |\partial_x w_s^{(2)}(x,s)|^2dx&\lesssim \frac{1}{s^2}\fint_{2^{-i-1}}^{2^{-i+2}}\int_{2Q}|v_2(x,\eta^2 k^2;s)|^2dkdx\nonumber
    \\
    &\qquad+ \Vert\partial_s A\Vert_{L^\infty(2Q\times[2^{-i-3},2^{-i+4}])}^2|Q|\kappa_0^2.\label{lemma:localEstimateNablaw_s^2n=1}
\end{align}
As a consequence, if we assume \eqref{condition:L^1Carlesontypecond}, we have the area function bound
\begin{align}
   \int_{T(3\Delta)}|\partial_x w_s^{(2)}|^2sds\lesssim |\Delta|\label{lemma:w_s^2SqFctBound2n=1},
\end{align}
and the \(L^1\) version
\begin{align}
   \int_{T(3\Delta)}|\partial_x w_s^{(2)}|ds\lesssim |\Delta|.\label{lemma:w_s^2SqFctBound3n=1}
\end{align}

\end{lemma}

\begin{proof}

Applying \reflemma{lemma:CaccTypeForw_t} and \refcor{cor:PointwiseBoundsofw_tAndrhoByM(nablaphi^s)} on the second term, \reflemma{lemma:UnifromBoundOnrholocally} on the first and then \reflemma{lemma:UniformBoundOnM[nablaphi^s]n=1} yields
\[\fint_{2^{-i-3}}^{2^{-i+4}}\int_{2Q} |\partial_x e^{-\eta^2k^2L^s}\phi_s|^2 + s^2|\partial_x\partial_ke^{-\eta^2k^2L^s}\phi_s|^2dxdk\lesssim |Q|\inf_{x\in Q} M[\partial_x\phi^s](x)\lesssim \kappa_0|Q|.\]
Combining this with the Caccioppoli type inequality \reflemma{Lemma:CacciopolliTypeInequalities} implies \eqref{lemma:localEstimateNablaw_s^2n=1}. 

\medskip
The proof of \eqref{lemma:w_s^2SqFctBound2n=1} simplifies compared to the proof of \eqref{lemma:w_s^2SqFctBound2}. We note that \refprop{prop:L2NormBoundsOfHeatSemigroup} implies
\begin{align*}
    \int_{3\Delta}|v_2(x,\eta^2k^2;s)|^2dx&\leq\Big(\int_0^k2\eta^2\tau \Vert e^{-\eta^2(k^2-\tau^2)L^s}\partial_x(\partial_s\AP\partial_x e^{-\eta^2\tau^2L^s}\phi^s)\Vert_{L^2} d\tau\Big)^2
    \\
    &\leq\Big(\int_0^k \frac{2\eta^2\tau}{\sqrt{k^2-\tau^2}} \Vert\partial_s\AP\partial_x e^{-\eta^2\tau^2L^s}\phi^s\Vert_{L^2} d\tau\Big)^2.
\end{align*}
By \reflemma{lemma:UnifromBoundOnrholocally} and \reflemma{lemma:UniformBoundOnM[nablaphi^s]n=1} we obtain for the scale \(i\in\mathbb{Z}\) with \(2^{-i}\approx s\)
\begin{align*}
    \Vert\partial_s\AP\partial_x e^{-\eta^2\tau^2L^s}\phi^s\Vert_{L^2}&\lesssim\Big(\sum_{Q\in \mathcal{D}_i(5\Delta)}\sup_Q|\partial_s\AP|^2|Q| \kappa_0^2\Big)^{1/2}
    \\
    &\hspace{30mm}+ \frac{1}{s}\Vert\partial_x e^{-\eta^2\tau^2L^s}\phi^s\Vert_{L^2(\mathbb{R}^n\setminus 5\Delta)}.
\end{align*}
Off-diagonal estimates (\refprop{prop:off-diagonal}), Poincar\'{e}'s inequality and the pointwise bound \(|\partial_tA|t\leq C\) yield
\begin{align*}
    \Vert\partial_s\AP\partial_x e^{-\eta^2\tau^2L^s}\phi^s\Vert_{L^2}&\lesssim\Big(\int_{5\Delta} \sup_{B(x,s,s/2)}|\partial_s\AP|^2dx\Big)^{1/2} + \frac{1}{s}\frac{1}{\tau}e^{-c\frac{l(\Delta)^2}{\tau^2}}\Vert\phi^s\Vert_{L^2(\mathbb{R}^n\setminus 5\Delta)}
    \\
    &\lesssim\Big(\int_{5\Delta} \sup_{B(x,s,s/2)}|\partial_s\AP|^2dx\Big)^{1/2} + \frac{1}{l(\Delta)^2}\Vert\phi^s\Vert_{L^2(\mathbb{R}^n\setminus 5\Delta)}
    \\
    &\lesssim\Big(\int_{5\Delta} \sup_{B(x,s,s/2)}|\partial_s\AP|^2dx\Big)^{1/2} + \frac{1}{\sqrt{l(\Delta)}}.
\end{align*}
This gives 
\begin{align*}
    \int_{3\Delta}|v_2(x,\eta^2k^2
    ;s)|^2dx&\leq s^2\int_{5\Delta} \sup_{B(x,s,s/2)}|\partial_s\AP|^2dx + \frac{s^2}{l(\Delta)} .
\end{align*}
Hence in total, we obtain
\begin{align*}
    \int_{T(3\Delta)} |\partial_x w_s^{(2)}(x,s)|^2 sdxds&\lesssim \int_{T(3\Delta)}2\sup_{B(x,s,s/2)}|\partial_s\AP|^2sdxds + \int_0^{l(\Delta)}\frac{s}{l(\Delta)}ds
    \\
    &\lesssim |\Delta|,
\end{align*}
which completes the proof of \eqref{lemma:w_s^2SqFctBound2n=1}.

\medskip

Lastly for \eqref{lemma:w_s^2SqFctBound3n=1}, we first fix \(s>0\) and let \(i\in\mathbb{Z}\) be the scale such that \(2^{-i}\approx s\approx k\). We would like to consider the collection of boundary cubes \(\mathcal{D}_i(4\Delta)\), but need to adjust and clarify the structure of this collection to make the following argument rigorous. Without loss of generality assume that \(0\) is the center point of \(\Delta\). Let \(\beta=(\beta_1,...,\beta_n)\in \mathbb{Z}^n\) now be a multi index and set \(Q_\beta:=\Delta(2^{-i-2}\beta,2^{-i-1})\). Then the collection of \(\{Q_\beta\}_{\beta\in \mathbb{Z}^n}\) covers \(\mathbb{R}^n\) and the collection has finite overlap, i.e. there exists \(N\in\mathbb{N}\) with \(\sum_{\beta\in\mathbb{Z}^n}\chi_{Q_\beta}\leq N\). Furthermore, we can note that the finite subcollection \(\{Q_\beta\}_{|\beta|_2\leq 2^{i+4}l(\Delta) }\) covers \(4\Delta\) and each of those \(Q_\beta\subset 5\Delta\).

Now, we can use a family of dualising functions \(h_\beta\in L^2(Q_\beta), \Vert h_\beta\Vert_{L^2}=1\) and integration by parts to estimate

\begin{align*}
    &\Big(\int_{Q_\beta}|v_2(x,\eta^2k^2;s)|^2dx\Big)^{1/2}
    \\
    &\leq \int_0^k2\eta^2\tau \Vert e^{-\eta^2(k^2-\tau^2)L^s}\partial_x(\partial_s\AP\partial_x e^{-\eta^2\tau^2L^s}\phi^s)\Vert_{L^2(Q_\beta)} d\tau
    \\
    &\lesssim\int_0^k\tau \int_{\mathbb{R}^n}\partial_s\AP\partial_x e^{-\eta^2\tau^2L^s}\phi^s  \cdot\partial_x e^{-\eta^2(k^2-\tau^2)L^s} h_\beta dx d\tau
    \\
    &\leq\int_0^k\tau\Big( \int_{4\Delta}\partial_s\AP\partial_x e^{-\eta^2\tau^2L^s}\phi^s  \cdot\partial_x e^{-\eta^2(k^2-\tau^2)L^s} h_\beta dx
    \\
    &\qquad + \sum_{j\geq 2} \int_{2^{j+1}\Delta\setminus 2^j\Delta}\partial_s\AP\partial_x e^{-\eta^2\tau^2L^s}\phi^s  \cdot\partial_x e^{-\eta^2(k^2-\tau^2)L^s} h_\beta dx\Big) d\tau
    \\
    &=:\int_0^k \tau \Big(I^\beta+\sum_{j\geq 2}J_j^\beta \Big)d\tau.
\end{align*}
For the first terms \(I^\beta\) and \(J_2^\beta\)  we have by \reflemma{lemma:UnifromBoundOnrholocally}, Cauchy-Schwarz inequality and \(L^2-L^2\) off-diagonal estimates (\refprop{prop:off-diagonal})
\begin{align*}
    I^\beta, J_2^\beta&\leq\int_{8\Delta}|\partial_s\AP\partial_x e^{-\eta^2\tau^2L^s}\phi^s  \cdot\partial_x e^{-\eta^2(k^2-\tau^2)L^s} h_\beta| dx
    \\
    &\leq\sum_{|\alpha|\leq 2^{l+6}l(\Delta)}\int_{Q_\alpha}|\partial_s\AP\partial_x e^{-\eta^2\tau^2L^s}\phi^s  \cdot\partial_x e^{-\eta^2(k^2-\tau^2)L^s} h_\beta| dx
    \\
    &\leq\sum_{|\alpha|\leq 2^{l+6}l(\Delta)}\sup_{Q_\alpha}|\partial_s\AP| \Vert\partial_x e^{-\eta^2\tau^2L^s}\phi^s\Vert_{L^2(Q_\alpha)}  \Vert\partial_x e^{-\eta^2(k^2-\tau^2)L^s} h_\beta\Vert_{L^2(Q_\alpha)}
    \\
    &\lesssim \sum_{|\alpha|\leq 2^{l+6}l(\Delta)}\sup_{Q_\alpha} |\partial_s\AP|\kappa_0\sqrt{|Q_\alpha|}\frac{1}{\sqrt{k^2-\tau^2}}e^{-c\frac{\mathrm{dist}(Q_\alpha,Q_\beta)^2}{k^2-\tau^2}}\Vert h_\beta\Vert_{L^2}
    \\
    &\lesssim \sum_{|\alpha|\leq 2^{l+6}l(\Delta)} \sup_{Q_\alpha} |\partial_s\AP|\kappa_0\frac{\sqrt{|Q_\alpha|}}{\sqrt{k^2-\tau^2}}e^{-c|\alpha-\beta|_2^2}.
\end{align*}
Here we used in the last line that if \(0\leq \tau\leq k\) then \(e^{-c\frac{\mathrm{dist}(Q_\alpha,Q_\beta)^2}{k^2-\tau^2}}=e^{-c\frac{|\alpha-\beta|_2^2s^2}{k^2-\tau^2}}\leq e^{-c|\alpha-\beta|_2^2s^2}\) and the Euclidean length of a vector \(|\alpha|=|\alpha|_2=\sqrt{\alpha_1^2+...+\alpha_n^2}\).

Summing this over \(\beta\) yields
\begin{align*}
    \sum_{|\beta|\leq 2^{l+4}l(\Delta)}I^\beta+J_2^\beta&\leq\sum_{|\beta|\leq 2^{l+4}l(\Delta)} \int_{6\Delta}|\partial_s\AP\partial_x e^{-\eta^2\tau^2L^s}\phi^s  \cdot \partial_x e^{-\eta^2(k^2-\tau^2)L^s} h_Q| dx
    \\
    &\lesssim \sum_{|\alpha|\leq 2^{l+6}l(\Delta)} \sup_{Q_\alpha} |\partial_s\AP|\kappa_0\frac{\sqrt{|Q_\alpha|}}{\sqrt{k^2-\tau^2}}\sum_{|\beta|\leq 2^{l+4}l(\Delta)} e^{-c|\alpha-\beta|_2^2}
    \\
    &\lesssim \sum_{|\alpha|\leq 2^{l+6}l(\Delta)} \sup_{Q_\alpha} |\partial_s\AP|\kappa_0\frac{\sqrt{|Q_\alpha|}}{\sqrt{k^2-\tau^2}}.
\end{align*}

For the remaining terms \(J_j^\beta, j\geq 3\) we have by \(L^2-L^2\) off-diagonal estimates (\refprop{prop:off-diagonal}), \reflemma{lemma:nablaSemigroupBoundedByNablaf}, and \reflemma{lemma:UniformBoundOnM[nablaphi^s]n=1}
\begin{align*}
    J^\beta_j=&\int_{2^{j+1}\Delta\setminus 2^j\Delta}\partial_s\AP\partial_x e^{-\eta^2\tau^2L^s}\phi^s  \cdot\partial_x e^{-\eta^2(k^2-\tau^2)L^s} h_Q dx
    \\
    &\lesssim \frac{1}{s}\Vert \partial_x e^{-\eta^2\tau^2L^s}\phi^s\Vert_{L^2(2^{j+1}\Delta\setminus 2^j\Delta)}\Vert\partial_x e^{-\eta^2(k^2-\tau^2)L^s} h_Q\Vert_{L^2(2^{j+1}\Delta\setminus 2^j\Delta)}
    \\
    &\lesssim \frac{1}{s}\Vert\partial_x\phi^s\Vert_{L^2}\frac{1}{\sqrt{k^2-\tau^2}}e^{-c\frac{2^{2j}l(\Delta)^2}{k^2-\tau^2}}\Vert h_Q\Vert_{L^2(Q)}
    \\
    &\lesssim \frac{1}{s}|\Delta|^{1/2}\frac{1}{\sqrt{k^2-\tau^2}}e^{-c\frac{2^{2j}l(\Delta)^2}{k^2-\tau^2}}\Vert h_Q\Vert_{L^2(Q)}.
\end{align*}
Here we used that \(h_Q\) is supported away from the annulus \(2^{j+1}\Delta\setminus 2^j\Delta\).

Before we sum over \(j\), we see that \(r\mapsto \frac{1}{r}e^{-c\frac{2^{2j}l(\Delta)^2}{r^2}}\) is maximized for \(r=2^jl(\Delta)\), whence
\begin{align*}
    \sum_{j\geq 3}J_\beta^j&\lesssim \sum_{j\geq 3}\frac{1}{s}|\Delta|^{1/2}\frac{1}{\sqrt{k^2-\tau^2}}e^{-c\frac{2^{2j}l(\Delta)^2}{k^2-\tau^2}}
    \\
    &\lesssim \sum_{j\geq 3}\frac{1}{s}|\Delta|^{1/2}\frac{1}{2^jl(\Delta)}
    \\
    &\lesssim \frac{1}{s\sqrt{l(\Delta)}}. 
\end{align*}

Finally, we get in total with \eqref{lemma:localEstimateNablaw_s^2n=1}
\begin{align*}
    &\int_{T(\Delta)}|\partial_x w_s^{(2)}| dxds\lesssim\sum_{i\in \mathbb{Z}, 2^{-i}\leq 3l(\Delta)}\int_{2^{-i-1}}^{2^{-i}}\int_\Delta|\partial_x w_s^{(2)}| dxds
    \\
    &\lesssim\sum_{i\in \mathbb{Z}, 2^{-i}\leq 3l(\Delta)}\int_{2^{-i-1}}^{2^{-i}}\sum_{Q_\beta\in \mathcal{D}_l(3\Delta)}\Big(\int_{Q_\beta}|\partial_x w_s^{(2)}|^2dx\Big)^{\frac{1}{2}}|Q_\beta|^\frac{1}{2} ds
    \\
    &\lesssim\sum_{i\in \mathbb{Z}, 2^{-i}\leq 3l(\Delta)}\int_{2^{-i-1}}^{2^{-i}}\sum_{Q_\beta\in \mathcal{D}_l(3\Delta)}\big(\frac{1}{s}\fint_{2^{-i-1}}^{2^{-i}}\int_{Q_\beta} |v_2(x,\eta^2k^2;s)|^2dxdk
    \\
    &\hspace{50mm}+ \sup_{B(x,s,s/2)}|\partial_s\AP||Q_\beta|^{1/2}\big)|Q_\beta|^\frac{1}{2} ds
\end{align*}
which allows us to finish the proof of \eqref{lemma:w_s^2SqFctBound3n=1} by bounding above by
\begin{align*}
    &\lesssim\sum_{i\in \mathbb{Z}, 2^{-i}\leq 3l(\Delta)}\int_{2^{-i-1}}^{2^{-i}}\sum_{Q_\beta\in \mathcal{D}_l(3\Delta)}\fint_{2^{-i-1}}^{2^{-i}}\int_0^k \frac{\tau}{s} \Big(I^\beta+\sum_{j\geq 2}J_j^\beta \Big)|Q_\beta|^{1/2}d\tau dk ds \\
    &\qquad + \sum_{i\in \mathbb{Z}, 2^{-i}\leq 3l(\Delta)}\int_{2^{-i-1}}^{2^{-i}}\sum_{Q_\beta\in \mathcal{D}_l(3\Delta)} \sup_{B(x,s,s/2)}|\partial_s\AP||Q_\beta| ds
    \\
    &\lesssim\sum_{i\in \mathbb{Z}, 2^{-i}\leq 3l(\Delta)}\Big(\int_{2^{-i-1}}^{2^{-i}}\sum_{Q_\alpha\in \mathcal{D}_l(6\Delta)} \sup_{Q_\alpha} |\partial_s\AP||Q_\alpha| \big(\fint_{2^{-i-1}}^{2^{-i}}\int_0^k\frac{1}{\sqrt{k^2-\tau^2}}d\tau dk\big) ds
    \\
    &\qquad+ \int_{2^{-i-1}}^{2^{-i}}\fint_{2^{-i-1}}^{2^{-i}}\int_0^k\frac{\tau \sqrt{s}}{s^2\sqrt{l(\Delta)}}d\tau dk ds\Big) + \int_{T(5\Delta)}\sup_{B(x,s,s/2)}|\partial_s\AP| dxdsdkds
    \\
    &\lesssim\int_{T(10\Delta)}\sup_{B(x,s,s/2)}|\partial_s\AP| dx ds + 6l(\Delta)\lesssim |\Delta|.
\end{align*}

\end{proof}

Next, we have the analog of \reflemma{lemma:SqFctBoundsForw_t}.
\begin{lemma}\label{lemma:SqFctBoundsForw_tn=1}
The following area function bounds involving \(w_t\) hold:
\begin{enumerate}[(i)]
    \item \(\int_{T(\Delta)}\frac{|w_t|^2}{s}dxds=\int_{T(\Delta)}\frac{|sL^se^{-\eta^2s^2L^s}\phi^s|^2}{s}dxds\lesssim |\Delta|;\)\label{lemma:w_tSqFctBound1n=1}
    \item \(\int_{T(\Delta)}\frac{|s\partial_x w_t|^2}{s}dxds\lesssim |\Delta|;\label{lemma:w_tSqFctBound2n=1}\)
    \item \(\int_{T(\Delta)}\frac{|s^2L^sw_t|^2}{s}dxds\lesssim |\Delta|.\label{lemma:w_tSqFctBound3n=1}\)
\end{enumerate}
\end{lemma}

\begin{proof}
We start with proving \eqref{lemma:w_tSqFctBound1n=1}. In contrast to \eqref{lemma:w_tSqFctBound1} in \reflemma{lemma:SqFctBoundsForw_t} the \(1\) times \(1\) matrix \(\AP\) is always symmetric, which allows to write \(\AP(x,s)=B(x,s)^2\) for \(B>0\). And hence the proof simplifies and we obtain
\begin{align*}
    \int_{T(\Delta)}\frac{|sL^se^{-\eta^2s^2L^s}\phi^s|^2}{s}dxds&\leq\int_{\mathbb{R}^n\times(0,l(\Delta))}\frac{|sL^se^{-\eta^2s^2L^s}\phi^s|^2}{s}dxds
    \\
    &=\int_{\mathbb{R}^n\times (0,l(\Delta))} B\partial_x L^{s}e^{-\eta^2s^2L^s}\phi^s \cdot B\partial_x e^{-\eta^2s^2L^s}\phi^s s dxds
    \\
    &=\int_{\mathbb{R}^n\times (0,l(\Delta))} B \partial_x \partial_s(e^{-\eta^2s^2L^s}\phi^s) \cdot B\partial_x e^{-\eta^2s^2L^s}\phi^s dxds
    \\
    &\qquad - \int_{\mathbb{R}^n\times (0,l(\Delta))} B \partial_x w_s^{(1)} \cdot B\partial_x e^{-\eta^2s^2L^s}\phi^s dxds
    \\
    &\qquad - \int_{\mathbb{R}^n\times (0,l(\Delta))} B \partial_x w_s^{(2)} \cdot B\partial_x e^{-\eta^2s^2L^s}\phi^s dxds 
    \\
    &=\int_{0} ^{l(\Delta)}\partial_s\Vert B\partial_x e^{-\eta^2s^2L^s}\phi^s\Vert_{L^2(\mathbb{R}^n)}^2 dxds
    \\
    &\qquad + \int_{\mathbb{R}^n\times (0,l(\Delta))} \partial_s\AP \partial_x e^{-\eta^2s^2L^s}\phi^s \cdot \partial_x e^{-\eta^2s^2L^s}\phi^s dxds
    \\
    &\qquad - \int_{\mathbb{R}^n\times (0,l(\Delta))} B \partial_x w_s^{(1)} \cdot B\partial_x e^{-\eta^2s^2L^s}\phi^s dxds
    \\
    &\qquad - \int_{\mathbb{R}^n\times (0,l(\Delta))} B \partial_x w_s^{(2)} \cdot B\partial_x e^{-\eta^2s^2L^s}\phi^s dxds 
    \\
    &=:I+II+III+IV.
\end{align*}

Looking at the integrals separately, first we have by \reflemma{lemma:nablaSemigroupBoundedByNablaf} and \reflemma{lemma:UniformBoundOnM[nablaphi^s]n=1}
\[I=\Vert B\partial_x e^{-\eta^2(l(\Delta))^2L^{l(\Delta)}}\phi^{l(\Delta)}\Vert_{L^2(\mathbb{R}^n)}^2 - \Vert B\partial_x \phi^0\Vert_{L^2(\mathbb{R}^n)}^2 \lesssim |\Delta|.\]
For the next integral we use duality between Carleson measure and nontangential maximal function to get
\[II\lesssim \Vert \sup_{B(x,s,s/2)} |\partial_s \AP|\Vert_{\mathcal{C}}\int_{\mathbb{R}^n} \tilde{N}^{l(\Delta)}(|\partial_x e^{-\eta^2s^2L^s}\phi^s|)^2 dx. \]
From \reflemma{lemma:UniformBoundOnM[nablaphi^s]n=1} we have
\[\int_{8\Delta}\tilde{N}^{l(\Delta)}(|\partial_x e^{-\eta^2s^2L^s}\phi^s|)^2 dx\lesssim |\Delta|.\]

Since for a cone that is truncated at height \(l(\Delta)\) with aperture \(1\) and tip in \(2^j\Delta\setminus 2^{j-1}\Delta\) all points of this cone lie in \(2^{j+1}\Delta\setminus 2^{j-2}\Delta\) and the nontangential maximal function here is truncated at height \(l(\Delta)\), we can use off-diagonal estimates (\refprop{prop:off-diagonal}) and Poincar\'{e}'s inequality to obtain for the away part
\begin{align*}
    \int_{\mathbb{R}^n\setminus 8\Delta}\tilde{N}(|\partial_x e^{-\eta^2s^2L^s}\phi^s|)^2 dx
    &\lesssim \sum_{j\geq 4}\int_{2^j \Delta\setminus 2^{j-1}\Delta}\tilde{N}(|\partial_x e^{-\eta^2s^2L^s}\phi^s|)^2 dx
    \\
    & \lesssim \sum_{j\geq 4}\int_{2^{j+1} \Delta\setminus 2^{j-2}\Delta}\sup_{0<s\leq 3l(\Delta)}\frac{1}{s^3}e^{-c\frac{2^{2j}l(\Delta)^2}{s^2}}\Vert \phi^s\Vert_{L^2}^2 dx
    \\
    & \lesssim \sum_{j\geq 4}\int_{2^{j+1} \Delta\setminus 2^{j-2}\Delta}\frac{1}{2^{3j}}\frac{1}{l(\Delta)^3}\Vert \phi^s\Vert_{L^2}^2 dx
    \\
    & \lesssim \sum_{j\geq 4}\int_{2^{j+1} \Delta\setminus 2^{j-2}\Delta}\frac{1}{2^{3j}} dx
    \\
    &\lesssim |\Delta|.
\end{align*}
Next, we have by integration by parts, \(L^s=(L^s)^*\) and \reflemma{lemma:UniformBoundOnM[nablaphi^s]n=1} 
\begin{align*}
    III&= \int_0^{l(\Delta)}\int_{\mathbb{R}^n}w_s^{(1)}(x,s)\cdot L^se^{-\eta^2s^2L^s}\phi^s(x) dx ds
    \\
    &= \int_0^{l(\Delta)}\int_{\mathbb{R}^n}e^{-\eta^2s^2L^s}(\partial_x\AP(x,s)\partial_x\partial_s\phi^s)(x) \cdot e^{-\eta^2s^2L^s}\phi^s(x) dx ds
    \\
    &= \int_0^{l(\Delta)}\int_{\mathbb{R}^n}\AP(x,s)\partial_x\partial_s\phi^s(x)\cdot\partial_xe^{-2\eta^2s^2L^s}\phi^s(x) dx ds
    \\
    &\leq\int_0^{l(\Delta)}\sum_{\substack{Q\in \mathcal{D}_i(Q)\\ 2^{-i}\leq s\leq 2^{-i+1}}} \sup_{y\in Q} \big(|\partial_s\AP(y,s)| + \fint_{3\Delta}|\partial_s A|dx\big)\big(\int_Q |\partial_x e^{-2\eta^2s^2L^s}\phi^s|^2\big)^{1/2}ds
    \\
    &\leq\int_{T(5\Delta)}\sup_{(y,t)\in B(x,s,s/2)}|\partial_s A(y,t)| dxds\lesssim |\Delta|.
\end{align*}

At last, we have
\begin{align*}
    |IV|&\leq\int_0^{l(\Delta)}\int_0^s\int_{\mathbb{R}^n}|2\tau\partial_s\AP\partial_x e^{-\eta^2\tau^2L^s}\phi^s\cdot \partial_x e^{-\eta^2(s^2-\tau^2)L^s}L^se^{-\eta^2 s^2L^s}\phi^s| dx dt ds
    \\
    &\leq \Vert \sup_{B(x,s,s/2)} |\partial_s A|\Vert_{\mathcal{C}}
    \\
    &\qquad\cdot\int_{\mathbb{R}^n}\tilde{N}^{l(\Delta)}\Big(\int_0^s \tau|\partial_x e^{-\eta^2\tau^2L^s}\phi^s| |\partial_x e^{-\eta^2(s^2-\tau^2)L^s}L^se^{-\eta^2 s^2L^s}\phi^s|d\tau\Big)(x) dx
\end{align*}

We note that the appearing nontangential maximal function will be dealt with similarly to before where we split the integration over all of \(\mathbb{R}^n\) into a local and an away part. First let us look at the local part and take \(x\in 10\Delta, t>0\). We have
\begin{align}
    &\fint_{B(x,t,t/2)} \int_0^s \tau|\partial_x e^{-\eta^2\tau^2L^s}\phi^s| |\partial_x e^{-\eta^2(s^2-\tau^2)L^s}L^se^{-\eta^2 s^2L^s}\phi^s|d\tau dxds\nonumber
    \\
    &\lesssim \fint_{t/2}^{3t/2} \int_0^s \tau\Big(\fint_{B(x,t/2)} |\partial_x e^{-\eta^2\tau^2L^s}\phi^s|^2dx\Big)^{1/2} \nonumber
    \\
    &\qquad\qquad \qquad \cdot\Big(\fint_{B(x,t/2)}|\partial_x e^{-\eta^2(s^2-\tau^2)L^s}L^se^{-\eta^2 s^2L^s}\phi^s|^2 dx\Big)^{1/2} d\tau ds\label{eq:splitinnontangential}
\end{align}  

To bound the expression \(\Big(\fint_{B(x,t/2)} |\partial_x e^{-\eta^2\tau^2L^s}\phi^s|^2dx\Big)^{1/2}\) we cannot directly apply \reflemma{lemma:UnifromBoundOnrholocally} since \(t\) and \(\tau\) are not comparable. Instead let \(l\in\mathbb{Z}\) be the scale with \(2^{-l}\approx\tau\), then 
\begin{align*}
    \fint_{B(x,t/2)} |\partial_x e^{-\eta^2\tau^2L^s}\phi^s|^2dx&\lesssim \frac{1}{t}\sum_{Q\in \mathcal{D}_l(B(x,t/2))}\int_Q |\partial_x e^{-\eta^2\tau^2L^s}\phi^s|^2dx
    \\
    &\lesssim \frac{\tau}{t}\sum_{Q\in \mathcal{D}_l(B(x,t/2))}\kappa_0^2.
\end{align*}
Since \(\# \mathcal{D}_l(B(x,t/2))\approx \frac{t}{\tau}\), we obtain \(\Big(\fint_{B(x,t/2)} |\partial_x e^{-\eta^2\tau^2L^s}\phi^s|^2dx\Big)^{1/2}\lesssim \kappa_0\).

For the second expression \(\Big(\fint_{B(x,t/2)}|\partial_x e^{-\eta^2(s^2-\tau^2)L^s}L^se^{-\eta^2 s^2L^s}\phi^s|^2 dx\Big)^{1/2}\) an analogous argument gives
\begin{align*}
    \Big(\fint_{B(x,t/2)}|\partial_x e^{-\eta^2(s^2-\tau^2)L^s}L^se^{-\eta^2 s^2L^s}\phi^s|^2 dx\Big)^{1/2}\lesssim \Big(\fint_{B(x,t/2)}|\partial_x L^se^{-\eta^2 s^2L^s}\phi^s|^2 dx\Big)^{1/2}.
\end{align*}
Further we can bound
\begin{align*}
    &\fint_{B(x,t/2)}|\partial_x L^se^{-\eta^2 s^2L^s}\phi^s|^2 dx
    \\
    &\lesssim \Big(\fint_{B(x,t)}|L^s L^se^{-\eta^2 s^2L^s}\phi^s|^2 dx\Big)^{1/2} \Big(\fint_{B(x,t)}|L^se^{-\eta^2 s^2L^s}\phi^s|^2 dx\Big)^{1/2}
    \\
    &\lesssim \frac{1}{s^{4}}\sup_{y\in B(x,t)}M[\partial_x\phi^s](y)\lesssim \frac{\kappa_0}{s^{4}},
\end{align*}
where we used \refcor{cor:PointwiseBoundsofw_tAndrhoByM(nablaphi^s)}.

Hence in total for \(x\in 10\Delta,t>0\)
\begin{align*}\fint_{B(x,t,t/2)} &\int_0^s \tau|\partial_x e^{-\eta^2\tau^2L^s}\phi^s| |\partial_x e^{-\eta^2(s^2-\tau^2)L^s}L^se^{-\eta^2 s^2L^s}\phi^s|d\tau dxds
\\
&\qquad\lesssim  \fint_{t/2}^{3t/2}\int_0^s\tau \frac{1}{s^{2}}d\tau ds\leq C, \end{align*}

and

\begin{align*}
    &\int_{8\Delta}\tilde{N}^{l(\Delta)}\Big(\int_0^s \tau|\partial_x e^{-\eta^2\tau^2L^s}\phi^s| |\partial_x e^{-\eta^2(s^2-\tau^2)L^s}L^se^{-\eta^2 s^2L^s}\phi^s|d\tau\Big)(x) dx
    \lesssim |\Delta|.
\end{align*}

For the away part, we can equally do the step \eqref{eq:splitinnontangential}. For \(x\in 2^j \Delta\setminus 2^{j-1}\Delta\subset\mathbb{R}^n\setminus 4\Delta\) with \(0<t\leq 3l(\Delta)\) and \(j\geq 3\) however, we obtain with off-diagonal estimates (\refprop{prop:off-diagonal})
\begin{align*}
    \Big(\fint_{B(x,t/2)} |\partial_x e^{-\eta^2\tau^2L^s}\phi^s|^2dx\Big)^{1/2}&\lesssim \frac{1}{\sqrt{t}}\frac{1}{\tau}e^{-c\frac{2^{2j}l(\Delta)^2}{\tau^2}}\Vert \phi^s\Vert_{L^2(3\Delta)}\lesssim  \frac{1}{\sqrt{t}}\frac{1}{2^{j}l(\Delta)}\Vert \phi^s\Vert_{L^2(3\Delta)}
    \\
    &\lesssim  \frac{1}{2^j\sqrt{t}}\Vert \partial_x\phi^s\Vert_{L^2(3\Delta)}\lesssim  \frac{1}{2^j\sqrt{t}}\sqrt{|\Delta|},
\end{align*}
where we used that \(\tau\mapsto \frac{1}{\tau}e^{-c\frac{2^{2j}l(\Delta)^2}{\tau^2}}\) is maximized for \(\tau=2^jl(\Delta)\) and Poincar\'{e}'s inequality. 

For the second expression we can split the integral into
\begin{align*}
&\Big(\fint_{B(x,t/2)}|\partial_x e^{-\eta^2(s^2-\tau^2)L^s}L^se^{-\eta^2 s^2L^s}\phi^s|^2 dx\Big)^{1/2}
\\
&\leq\Big(\fint_{B(x,t/2)}|\partial_x e^{-\eta^2(s^2-\tau^2)L^s}(\chi_{2^{j-2}\Delta}L^se^{-\eta^2 s^2L^s}\phi^s)|^2 dx\Big)^{1/2}  
\\
&\qquad + \Big(\fint_{B(x,t/2)}|\partial_x e^{-\eta^2(s^2-\tau^2)L^s}(\chi_{\mathbb{R}^n\setminus 2^{j-2}\Delta}L^se^{-\eta^2 s^2L^s}\phi^s)|^2 dx\Big)^{1/2}.
\end{align*}
By off-diagonal estimates (\refprop{prop:off-diagonal}), maximizing \(r\mapsto e^{-c\frac{2^{2j-4}l(\Delta)^2}{r^2}}\), and \refcor{cor:PointwiseBoundsofw_tAndrhoByM(nablaphi^s)} we obtain for the first more local part
\begin{align*}
    &\Big(\fint_{B(x,t/2)}|\partial_x e^{-\eta^2(s^2-\tau^2)L^s}(\chi_{2^{j-2}l(\Delta)}L^se^{-\eta^2 s^2L^s}\phi^s)|^2 dx\Big)^{1/2}
    \\
    &\lesssim \frac{1}{\sqrt{t}}\frac{1}{\sqrt{s^2-\tau^2}}e^{-c\frac{2^{2j-4}l(\Delta)^2}{s^2-\tau^2}}\Vert L^se^{-\eta^2 s^2L^s}\phi^s\Vert_{L^2}
    \\
    &\lesssim \frac{1}{\sqrt{t}}\frac{1}{2^jl(\Delta)}\frac{1}{s}\Vert M[\partial_x\phi^s]\Vert_{L^2(3\Delta)}
    \\
    &\lesssim \frac{1}{2^{j}\sqrt{t}}\frac{1}{s}\frac{1}{\sqrt{|\Delta|}}\kappa_0.
\end{align*}
For the second away part, we use \reflemma{lemma:nablaSemigroupBoundedByNablaf} and off-diagonal estimates (\refprop{prop:off-diagonal}) to deduce
\begin{align*}
    &\Big(\fint_{B(x,t/2)}|\partial_x e^{-\eta^2(s^2-\tau^2)L^s}(\chi_{\mathbb{R}^n\setminus 2^{j-2}l(\Delta)}L^se^{-\eta^2 s^2L^s}\phi^s)|^2 dx\Big)^{1/2}
    \\
    &\lesssim \frac{1}{\sqrt{t}}\frac{1}{\sqrt{s^2-t^2}}\Vert L^se^{-\eta^2s^2L^s}\phi^s\Vert_{L^2(\mathbb{R}^n\setminus 2^{j-2}\Delta)}
    \\
    &\lesssim \frac{1}{\sqrt{t}}\frac{1}{\sqrt{s^2-\tau^2}}\frac{1}{s^2}e^{-c\frac{2^{2j-4}l(\Delta)^2}{s^2}}\Vert \phi^s\Vert_{L^2}
    \\
    &\lesssim \frac{1}{\sqrt{t}}\frac{1}{\sqrt{s^2-\tau^2}}\frac{1}{2^{2j}l(\Delta)^2}\Vert \phi^s\Vert_{L^2(3\Delta)}
    \\
    &\lesssim \frac{1}{\sqrt{t}}\frac{1}{\sqrt{s^2-\tau^2}}\frac{1}{2^{2j}l(\Delta)}\Vert \partial_x\phi^s\Vert_{L^2(3\Delta)}
    \\
    &\lesssim \frac{1}{2^{2j}\sqrt{t}}\frac{1}{\sqrt{s^2-\tau^2}}\frac{1}{\sqrt{|\Delta|}}\kappa_0.
\end{align*}

Hence we obtain for \(x\in 2^j \Delta\setminus 2^{j-1}\Delta\)
\begin{align*}\fint_{B(x,t,t/2)} &\int_0^s \tau|\partial_x e^{-\eta^2\tau^2L^s}\phi^s| |\partial_x e^{-\eta^2(s^2-\tau^2)L^s}L^se^{-\eta^2 s^2L^s}\phi^s|d\tau dxds
\\
&\qquad\lesssim  \fint_{t/2}^{3t/2}\int_0^s\tau \frac{1}{2^j\sqrt{t}}\sqrt{|\Delta|}\frac{1}{2^{j}\sqrt{t}}\frac{1}{\sqrt{|\Delta|}}\big(\frac{1}{s} + \frac{1}{\sqrt{s^2-\tau^2}}\big) d\tau ds\leq \frac{1}{2^{2j}}
\end{align*}

and

\begin{align*}
    &\int_{\mathbb{R}^n\setminus 8\Delta}\tilde{N}\Big(\int_0^s \tau|\partial_x e^{-\eta^2\tau^2L^s}\phi^s| |\partial_x e^{-\eta^2(s^2-\tau^2)L^s}L^se^{-\eta^2 s^2L^s}\phi^s|d\tau\Big)(x) dx
    \\
    &\lesssim \sum_{j\geq 4}\int_{2^j\Delta\setminus 2^{j-1}\Delta}\tilde{N}\Big(\int_0^s \tau|\partial_x e^{-\eta^2\tau^2L^s}\phi^s| |\partial_x e^{-\eta^2(s^2-\tau^2)L^s}L^se^{-\eta^2 s^2L^s}\phi^s|d\tau\Big)(x) dx
    \\
    &\lesssim  \sum_{j\geq 4}\int_{2^{j+1}\Delta\setminus 2^{j-2}\Delta}\frac{1}{2^{2j}}dx\lesssim |\Delta|,
\end{align*}
whence the integral \(|IV|\lesssim |\Delta|\).

\hfill\\
The proofs of \eqref{lemma:w_tSqFctBound2n=1} and \eqref{lemma:w_tSqFctBound3n=1} rely on \eqref{lemma:w_tSqFctBound1n=1} and follow as in \reflemma{lemma:SqFctBoundsForw_t}.
\end{proof}

Lastly with all lemmas in this section together, we obtain that \reflemma{lemma:L^2estimatesForSquareFunctions} holds under assumptions \eqref{condition:L^1Carlesontypecond}. The only modification needed is in the proof of \((c)\), which is trivial using the pointwise bound \reflemma{lemma:UniformBoundOnM[nablaphi^s]n=1}.

\bibliographystyle{alpha}
\bibliography{references} 
\end{document}